\newcommand{\cmark}{\ding{51}}
\newcommand{\xmark}{\ding{55}}
\theoremstyle{definition}
\newtheorem{theorem}{Theorem}[section]
\newtheorem{definition}[theorem]{Definition}
\newtheorem{corollary}[theorem]{Corollary}
\newtheorem{prop}[theorem]{Proposition}
\newtheorem{lemma}[theorem]{Lemma}
\newtheorem{example}[theorem]{Example}
\newtheorem{question}[theorem]{Question}
\theoremstyle{remark}
\newtheorem{remark}{Remark}[section]
\renewcommand{\emph}{\textsl}
\renewcommand{\textit}{\textsl}
\renewcommand{\inf}{\mathop{\mathrm{inf}\vphantom{\mathrm{sup}}}}
\newcommand{\ceil}[1]{\left\lceil #1 \right\rceil}
\newcommand{\me}{\mathrm{e}}
\newcommand{\Z}{{\mathbb Z}}
\newcommand{\R}{{\mathbb R}}
\newcommand{\N}{{\mathbb N}}
\newcommand{\mc}{\mathcal}
\newcommand{\dd}{\,\mathrm{d}}
\newcommand{\Ima}{\mathrm{Im}}
\numberwithin{equation}{section}
\begin{document}

\title{Measure theoretic entropy of random substitution subshifts}
 
\author[P.\,Gohlke]{P.\,Gohlke}
\author[A.\,Mitchell]{A.\,Mitchell}
\author[D.\,Rust]{D.\,Rust}
\author[T.\,Samuel]{T.\,Samuel}

\address[P.\,Gohlke]{Fakult\"at f\"ur Mathematik, Universit\"at Bielefeld, North Rhine-Westphalia, D-33501, Germany}
\address[A.\,Mitchell \& T.\,Samuel]{School of Mathematics, University of Birmingham, Edgbaston, B15 2TT, UK}
\address[D.\,Rust]{School of Mathematics and Statistics, The Open University, Milton Keynes, MK7 6AA, UK}

\subjclass[2020]{37B10, 37A25, 37A50, 52C23.}

\keywords{Aperiodic sequence; random substitution; intrinsic ergodicity; measure of maximal entropy.}

\begin{abstract}

Subshifts of deterministic substitutions are ubiquitous objects in dynamical systems and aperiodic order (the mathematical theory of quasicrystals). Two of their most striking features are that they have low complexity (zero topological entropy) and are uniquely ergodic. Random substitutions are a generalisation of deterministic substitutions where the substituted image of a letter is determined by a Markov process. In stark contrast to their deterministic counterparts, subshifts of random substitutions often have positive topological entropy, and support uncountably many ergodic measures.  The underlying Markov process singles out one of the ergodic measures, called the frequency measure. Here, we develop new techniques for computing and studying the entropy of these frequency measures. As an application of our results, we obtain closed form formulas for the entropy of frequency measures for a wide range of random substitution subshifts and show that in many cases there exists a frequency measure of maximal entropy. Further, for a class of random substitution subshifts, we prove that this measure is the unique measure of maximal entropy.  These subshifts do not satisfy Bowen's specification property or the weaker specification property of Climenhaga and Thompson and hence provide an interesting new class of intrinsically ergodic subshifts.
\end{abstract} 

\maketitle

\section{Introduction}
A (deterministic) substitution replaces each symbol in a finite or infinite string by a concatenation of symbols, according to a fixed rule. If this replacement is instead performed randomly, we speak of a \emph{random substitution}. The data necessary to determine a random substitution can be given in terms of a tuple $(\vartheta,\mathbf{P})$, where $\vartheta$ encodes all the possible replacement rules and $\mathbf{P}$ the associated probability parameters. To a given random substitution $(\vartheta,\mathbf{P})$, we associate a sequence space $X_{\vartheta}$, called a random substitution subshift. In non-degenerate cases, this subshift does not depend on the choice of $\mathbf{P}$. The bi-infinite sequences $x \in X_{\vartheta}$ are characterised by the property that every pattern in $x$ can be generated by iterating $\vartheta$, starting from a single symbol. Fundamental properties of $\vartheta$ are mirrored by topological, combinatorial and measure theoretic properties of $X_{\vartheta}$. The influence of $\mathbf{P}$ is captured by the choice of a particular probability measure $\mu_{\mathbf{P}}$ on $X_{\vartheta}$, called the \emph{frequency measure} of $(\vartheta,\mathbf{P})$. In many cases, these subshifts combine, in a non-trivial manner, properties of classic examples such as subshifts of finite type and (deterministic) substitution subshifts. In fact, these two well-studied classes can be interpreted as special cases of random substitution subshifts \cite{gohlke-rust-spindeler,rust-spindeler}.

Positive topological entropy for random substitutions was identified in the pioneering work of  Godr\`{e}che and Luck \cite{godreche-luck} in 1989, where they introduced and focused on a single example, the random Fibonacci substitution. This was later shown to hold in general for random substitutions \cite{rust-spindeler} and places them in stark contrast to their deterministic counterparts. While they have positive entropy, indicating disorder, random substitutions often admit long-range correlations presenting as a non-trivial pure-point component in the diffraction spectrum of a corresponding quasicrystal \cite{baake-spindeler-strungaru,godreche-luck,moll}. This competition between order and disorder, and between long- and short-range correlations suggests an intricate combinatorial structure which warrants careful study.

The presence of an inherent hierarchical structure allows for the application of renormalisation methods in the study of random substitutions. Leveraging these techniques, the topological entropy was calculated for several examples of random substitution subshifts, see for instance \cite{godreche-luck,nilsson}, and a unified approach was later provided in \cite{gohlke}. There, it was shown that for subshifts of \textsl{primitive} and \textsl{compatible} random substitutions, the topological entropy coincides with the notion of \textsl{inflation word entropy}, which is characterised in terms of the substitution branching process as opposed to the subshift. This builds a natural bridge to the point of view adopted in formal language theory, where random substitutions---known as (stochastic) $EOL$, or $L$ systems---are classified according to the set of accessible inflation words \cite{rozenberg,yokomori}. Similarly, the Martin boundaries of random substitutions, studied by Denker and Koslicki \cite{koslicki-denker} are limiting objects of the stochastic process induced by a random substitution, rather than being defined for the associated subshift.

Topological entropy is almost by definition blind to the generating probabilities assigned to a random substitution. This is not the case for aspects such as word frequencies and diffraction spectra, which are almost-sure properties in the limit of an appropriate substitution Markov process \cite{peyriere}. Alternatively, these properties can be associated with the frequency measure $\mu_{\mathbf{P}}$, which is ergodic with respect to the shift-action \cite{gohlke-spindeler}. It is therefore reasonable to treat entropy on the same footing, interpreting it as a quantity that is generic with respect to a frequency measure that reflects the underlying Markov process. What's more, this perspective more closely reflects the original context considered by Godr\`{e}che and Luck \cite{godreche-luck}, who were interested in random substitutions providing models for generating physical quasicrystals, whose empirical entropy will depend on the underlying Markov process.

A seminal paper of Mandelbrot on turbulence in a fluid \cite{mandelbrot}, which inspired the first formal setup of random substitutions in the physics literature \cite{peyriere}, initiated the research into fractal percolation \cite{chayes,kahane,peres}. Random substitutions have proved a useful tool to model this phenomenon \cite{dekking-meester,dekking-wal} and it was shown by Dekking, Grimmett and Meester \cite{dekking-grimmett,dekking-meester} that varying the underlying generating probabilities gives rise to several phase transitions. In the one-dimensional setting, we show that the associated entropy depends continuously on the generating probabilities and give a closed form expression in many cases. This enables us to single out those parameters that give maximal entropy.
We expect that many of the methods established in this paper can be generalized to higher dimensions, which would provide a way to determine the phase in a random percolation model that gives rise to maximal entropy.

More explicitly, we study the entropy of frequency measures corresponding to primitive random substitutions (isolated examples have been previously studied in \cite{wing}). We show that the entropy of these measures coincides with a new notion of entropy characterised in terms of inflation words (\Cref{THM:main}). For subshifts of primitive and compatible random substitutions, we demonstrate the existence of a measure of maximal entropy that is realised as a weak limit of frequency measures (\Cref{T weak limit MME}). Further, under mild conditions, we prove that there exists a frequency measure of maximal entropy, and for a large class of random substitution subshifts, we verify that this measure is the unique measure of maximal entropy (\Cref{T intrinsic ergodicity}). Indeed, determining dynamical systems which are intrinsically ergodic (i.e.\ those which exhibit a unique measure of maximal entropy) is a fundamental problem at the interface of ergodic theory and topological dynamics, and stems from the foundational work of Bowen \cite{Bowen74}.  There, it was shown  that a dynamical system which is expansive and satisfies the specification property is intrinsically ergodic. Bowen's proof relies on combinatorial arguments to establish a (weak) Gibbs property for a certain measure of maximal entropy, from which uniqueness of the measure follows. Beyond specification, for instance for $\beta$-shifts, similar strategies can be employed  \cite{THOMPSON_3,THOMPSON_1}. However, as with Bowen's proof, central to these strategies is the use of a Gibbs property. In our case there exists an obstruction to using these methods in that frequency measures of maximal entropy do not satisfy the Gibbs properties given in \cite{Bowen74,THOMPSON_3,THOMPSON_1}. Nevertheless, by establishing a weaker Gibbs property on cylinder sets of inflation words  (\Cref{LEM:mu-bound}), we are able to circumvent this obstruction  to obtain \Cref{T intrinsic ergodicity}.

\subsection*{Outline} In \Cref{S random subs} we introduce our key notation and definitions. We summarise the main results on topological entropy from \cite{gohlke} in \Cref{SS top entropy}, and give the definition of the frequency measure  corresponding to a primitive random substitution in \Cref{SS frequency measures}.

In \Cref{S MT entropy} we introduce the notion of measure theoretic inflation word entropy and state our first main result, \Cref{THM:main}, which shows, for primitive random substitutions, that this new notion of entropy coincides with the entropy of the corresponding frequency measure. 
We also obtain explicit upper and lower bounds. Under some additional assumptions, closed form expressions for the entropy can be obtained from \Cref{THM:main-urp}.

We conclude with \Cref{S MME,S examples}. \Cref{S MME} is devoted to measures of maximal entropy and intrinsic ergodicity of random substitution subshifts, and \Cref{S examples} contains a number of examples that illustrate our main results and a collection of open questions.

\section{Preliminaries}\label{S random subs}

The symbolic notation introduced in this section is mostly in line with \cite{baake-grimm,lind-marcus}, to which we refer the reader for further details. For background on random substitutions as introduced below, we point the reader to \cite{gohlke-spindeler,rust-spindeler}.

An alphabet $\mathcal{A} = \{ a_{1}, \ldots, a_{d} \}$, for some $d \in \mathbb{N}$, is a finite set of symbols $a_{i}$, which we call \textsl{letters}, equipped with the discrete topology. A \textsl{word} $u$ with letters in $\mathcal{A}$ is a finite concatenation of letters, namely $u = a_{i_{1}} \cdots a_{i_{n}}$ for some $n \in \mathbb{N}$. We write $\lvert u \rvert = n$ for the length of the word $u$, and for $m \in \mathbb{N}$, we let $\mathcal{A}^{m}$ denote the set of all words of length $m$ with letters in $\mathcal{A}$. We set $\mathcal{A}^{+} = \bigcup_{m \in \mathbb{N}} \mathcal{A}^{m}$ and let $\mathcal{A}^{\mathbb{Z}} = \{ \cdots a_{i_{-1}} a_{i_{0}} a_{i_{1}} \cdots : a_{i_j} \in \mathcal{A} \; \text{for all} \; j \in \mathbb{Z} \}$ denote the set of all bi-infinite sequences with elements in $\mathcal{A}$ and endow $\mathcal{A}^{\mathbb{Z}}$ with the product topology. With this topology, the space $\mathcal{A}^{\mathbb{Z}}$ is compact and metrisable. 

If $i$ and $j \in \mathbb{Z}$ with $i \leq j$, and $x = \cdots x_{-1} x_{0} x_{1} \cdots \in \mathcal{A}^{\mathbb{Z}}$, then we let $x_{[i,j]} = x_i x_{i+1} \cdots x_{j}$. We use the same notation if $v \in \mathcal{A}^{+}$ and $1 \leq i \leq j \leq |v|$. For $u$ and $v \in \mathcal{A}^{+}$ (or $v \in \mathcal{A}^{\Z}$), we write $u \triangleleft v$ if $u$ is a subword of $v$, namely if there exist $i$ and $j \in \mathbb{Z}$ with $i \leq j$ so that  $u = v_{[i, j]}$. 
For $u$ and $v \in \mathcal{A}^{+}$, we set $\lvert v \rvert_u$ to be the number of (possibly overlapping) occurrences of $u$ as a subword of $v$.

If $u = a_{i_1} \cdots a_{i_n}$ and $v = a_{j_1} \cdots a_{j_m} \in \mathcal{A}^{+}$, for some $n$ and $m \in \mathbb{N}$, we write $u v$ for the concatenation of $u$ and $v$, that is, we set $uv = a_{i_1} \cdots a_{i_n} a_{j_1} \cdots a_{j_m} \in \mathcal{A}^{n+m}$. The \textsl{abelianisation} of a word $u \in \mathcal{A}^{+}$ is the vector $\Phi (u) \in \N_0^d$, defined by $\Phi (u)_{i} = \lvert u \rvert_{a_i}$ for all $i \in \{ 1, \ldots, d \}$.

For a set $B$, we let $\# B$ be the cardinality of $B$ and let $\mathcal{F}(B)$ be the set of non-empty finite subsets of $B$.

\subsection{Random substitutions and their subshifts}\label{SS random subs and subshifts}

We define a random substitution via the data that is required to determine its action on letters. In the second step we extend it to a random map on words.

\begin{definition}
Let $\mathcal{A} = \{ a_{1}, \ldots, a_{d} \}$ be a finite alphabet. A random substitution $\vartheta_{\mathbf{P}} = (\vartheta, \mathbf{P})$ is a finite-set-valued function $\vartheta \colon \mathcal{A} \rightarrow \mathcal{F}(\mathcal{A}^{+})$ together with a set of non-degenerate probability vectors 
	\begin{align*}
	\mathbf{P} = \left\{ \mathbf{p}_i = ( p_{i, 1}, \ldots, p_{i, r_i} ) : r_i = \# \vartheta(a_i), \, \mathbf{p}_i \in (0,1]^{r_i} \text{ and } \sum_{j=1}^{r_i} p_{i,j} = 1 \text{ for all } 1 \leq i \leq d \right\},
	\end{align*}
such that
	\begin{align*}
	\vartheta_{\mathbf{P}} \colon a_i \mapsto
		\begin{cases}
		s^{(i,1)} & \text{with probability } p_{i, 1},\\
		\hfill \vdots \hfill & \hfill \vdots\hfill\\
		s^{(i,r_i)} & \text{with probability } p_{i, r_i},
		\end{cases}
	\end{align*}
for every $1 \leq i \leq d$, where $\vartheta(a_i) = \{ s^{(i,j)} \}_{1\leq j \leq r_i}$. We call each $s^{(i,j)}$ a \textsl{realisation} of $\vartheta_{\mathbf{P}}(a_i)$. If there exists an integer $\ell \geq 2$ such that $\lvert s^{(i,j)} \rvert = \ell$ for all $i \in \{ 1, \ldots, d \}$ and $j \in \{ 1, \ldots, r_i \}$, then we call $\vartheta_{\mathbf{P}}$ a \textsl{constant length} random substitution of length $\ell$. If $r_i = 1$ for all $i \in \{ 1, \ldots, d \}$, then we call $\vartheta_{\mathbf{P}}$ \textsl{deterministic}. 
\end{definition}

\begin{example}[Random period doubling]\label{Ex RPD}
Let $\mathcal{A} = \{ a, b\}$, and let $p \in (0,1)$.
The \textsl{random period doubling substitution} $\vartheta_{\mathbf{P}} = (\vartheta, \mathbf{P})$ is the constant length substitution given by
	\begin{align*}
	\vartheta_{\mathbf P} \colon
		\begin{cases}
		a \mapsto
			\begin{cases}
			ab & \text{with probability } p,\\
			ba & \text{with probability } 1-p,
		     \end{cases}\\[1.25em]
		b \mapsto aa \quad \text{with probability } 1,
		\end{cases}
	\end{align*}
with defining data $r_{a} = 2$, $r_{b} = 1$, $s^{(a, 1)} = ab$, $s^{(a, 2)} = ba$, $s^{(b, 1)} = aa$, $\mathbf{P} = \{ \mathbf{p}_{a} = (p, 1-p), \mathbf{p}_{b} = (1) \}$, and corresponding set-valued function $\vartheta \colon a \mapsto \{ab,ba\}, b \mapsto \{aa\}$.
\end{example}

In the following we describe how a random substitution $\vartheta_{\mathbf{P}}$ determines a (countable state) Markov matrix $Q$, indexed by $\mathcal{A}^{+} \times \mathcal{A}^{+}$. We interpret the entry $Q_{u,v}$ as the probability to map a word $u$ to a word $v$ under the random substitution. Formally, $Q_{a_i, s^{(i,j)}} = p_{i,j}$ for $j \in \{1,\ldots, r_i\}$ and $Q_{a_i,v} =0$ if $v \notin \vartheta(a_i)$.  We extend the action of $\vartheta_{\mathbf{P}}$ to finite words by mapping each letter \textsl{independently} to one of its realisations, distinguishing random substitutions from $S$-adic systems. More precisely, given $n \in \mathbb{N}$, $u = a_{i_1} \cdots a_{i_n} \in \mathcal{A}^{n}$ and $v \in \mathcal{A}^{+}$ with $|v| \geq n$, we let
	\begin{align*}
	\mathcal{D}_n(v) = \{ (v^{(1)},\ldots, v^{(n)}) \in (\mathcal{A}^{+})^{n} : v^{(1)} \cdots v^{(n)} = v \}
	\end{align*} 
denote the set of all decompositions of $v$ into $n$ individual words and set
	\begin{align*}
	Q_{u,v} = \sum_{(v^{(1)},\ldots,v^{(n)}) \in \mathcal{D}_n(v)} \prod_{j = 1}^{n} Q_{a_{i_j},v^{(j)}}.
	\end{align*} 
In words, $\vartheta_{\mathbf{P}}(u) = v$ with probability $Q_{u,v}$.

For $u \in \mathcal{A}^{+}$, let $(\vartheta_{\mathbf{P}}^{n}(u))_{n \in \mathbb{N}}$ be a stationary Markov chain on some probability space $(\Omega_u, \mathcal{F}_u, \mathbb{P}_u)$, with Markov matrix given by $Q$, that is
	\begin{align*}
	\mathbb{P}_u [\vartheta_{\mathbf{P}}^{n+1}(u) = w \mid \vartheta_{\mathbf{P}}^{n}(u) = v] = \mathbb{P}_v [\vartheta_{\mathbf{P}}(v) = w] = Q_{v,w},
	\end{align*} 
for all $v$ and $w \in \mathcal{A}^{+}$, and $n \in \mathbb{N}$. In particular, we have
	\begin{align*}
	\mathbb{P}_u [\vartheta_{\mathbf{P}}^{n}(u) = v] = (Q^{n})_{u,v}
	\end{align*} 
for all $u$ and $v \in \mathcal{A}^{+}$, and $n \in \mathbb{N}$. We often write $\mathbb{P}$ for $\mathbb{P}_u$ if the initial word is understood. 
In this case, we also write $\mathbb{E}$ for the expectation with respect to $\mathbb{P}$.
As before, we call $v$ a \textsl{realisation} of $\vartheta^{n}_{\mathbf{P}}(u)$ if $(Q^{n})_{u,v} > 0$ and set 
	\begin{align*}
	\vartheta^{n}(u) = \{ v \in \mathcal{A}^{+} : (Q^{n})_{u,v} > 0\}
	\end{align*} 
to be the set of all realisations of $\vartheta_{\mathbf{P}}^{n}(u)$. Conversely, we may regard $\vartheta^{n}_{\mathbf{P}}(u)$ as the set $\vartheta^{n}(u)$, endowed with the additional structure of a probability vector. If $u = a \in \mathcal{A}$ is a letter, we call a word $v \in \vartheta^{k}(a)$ a (level-$k$) \textsl{inflation word}.
The approach of defining a random substitution in terms of an associated Markov chain goes back to work of Peyri\`{e}re \cite{peyriere} and was pursued further by Koslicki \cite{koslicki}, and Denker and Koslicki \cite{koslicki-denker}.

For many structural properties of $\vartheta_{\mathbf{P}}$ the choice of (non-degenerate) probability vectors is immaterial. In these cases, one sometimes refers to $\vartheta$ instead of $\vartheta_{\mathbf{P}}$ as a random substitution, see for instance \cite{gohlke}. On the other hand, for some applications, one needs additional structure on the probability space. In fact, there is an underlying branching process, similar to a Galton--Watson process, that allows one to construct more refined random variables, see \cite{gohlke-spindeler} for further details.

Given a random substitution $\vartheta_{\mathbf{P}} = (\vartheta, \mathbf{P})$ over an alphabet $\mathcal{A} = \{ a_{1}, \ldots, a_{d} \}$ with cardinality $d \in \mathbb{N}$, we define the \textsl{substitution matrix} $M = M_{\vartheta_{\mathbf{P}}} \in \mathbb{R}^{d \times d}$ of $\vartheta_{\mathbf{P}}$ by
	\begin{align*}
	M_{i, j}
	= \mathbb{E}[\lvert \vartheta_{\mathbf{P}}  (a_{j}) \rvert_{a_{i}}]
	= \sum_{k = 1}^{r_{j}} p_{j, k} \lvert s^{(j, k)} \rvert_{a_{i}}.
	\end{align*}
Since $M$ has only non-negative entries, its spectral radius is also a real eigenvalue of maximal modulus, denoted by $\lambda$. For notational convenience, we denote the maximal length of a (level-$1$) inflation word by
\[
|\vartheta| = \max \{ |u| : u \in \vartheta(a), a \in \mathcal A \}.
\]
By construction, $1 \leq \lambda \leq |\vartheta|$, where $\lambda = 1$ occurs precisely if $M$ is column-stochastic. This corresponds to the trivial case of a non-expanding random substitution, which we discard in the following. If the matrix $M$ is \emph{primitive} (i.e. if there exists a $k \in \mathbb{N}$ such that all the entries of $M^{k}$ are positive), Perron--Frobenius theory implies that $\lambda$ is a simple eigenvalue and that the corresponding left and right eigenvectors $\mathbf{L} = (L_{1}, \ldots, L_{d})^{\top}$ and $\mathbf{R} = (R_{1}, \ldots, R_{d})^{\top}$ can be chosen to have strictly positive entries. We normalise these eigenvectors according to $\lVert \mathbf{R} \rVert_{1} = 1 = \mathbf{L}^{\top} \, \mathbf{R}$. In this situation, we call $\lambda$ the \textsl{Perron--Frobenius eigenvalue} of $\vartheta_{\mathbf{P}}$, and $\mathbf{L}$ and $\mathbf{R}$ the \textsl{left} and \textsl{right Perron--Frobenius eigenvectors} of $\vartheta_{\mathbf{P}}$, respectively.

\begin{definition}
We say that $\vartheta_{\mathbf{P}}$ is \textsl{primitive} if $M = M_{\vartheta_{\mathbf{P}}}$ is primitive and its Perron--Frobenius eigenvalue satisfies $\lambda > 1$.
\end{definition}

We emphasise that for a random substitution $\vartheta_{\mathbf{P}}$, being primitive is independent of the (non-degenerate) data $\mathbf{P}$. In this sense, primitivity is a property of $\vartheta$ rather than $\vartheta_{\mathbf{P}}$.

\begin{remark}
Primitivity is a standard assumption, both for deterministic and random substitutions. More general (random) substitutions can be treated by bringing $M$ into an upper block-triangular normal form via an appropriate permutation of letters. Throughout most of this paper we stick to the primitive case to avoid technicalities.
\end{remark}

For a constant length primitive random substitution of length $\ell$, an elementary calculation shows that $\lambda = \ell$; and for a given primitive random substitution $\vartheta_{\mathbf{P}}$ with Perron--Frobenius eigenvalue $\lambda$ and for $k \in \mathbb{N}$, we have that $M_{\vartheta_{\mathbf{P}}}^{k} = M_{\vartheta_{\mathbf{P}}^{k}}$ and hence the Perron--Frobenius eigenvalue of $\vartheta_{\mathbf{P}}^{k}$ is $\lambda^{k}$.

Given a random substitution $\vartheta_{\mathbf{P}} = (\vartheta, \mathbf{P})$, a word $u \in \mathcal{A}^{+}$ is called \textsl{($\vartheta$-)legal} if there exists an $a_i \in \mathcal{A}$ and $k \in \mathbb{N}$ such that $u$ appears as a subword of some word in $\vartheta^{k} (a_i)$. We define the \textsl{language} of $\vartheta$ by $\mathcal{L}_{\vartheta} = \{ u \in \mathcal{A}^{+} : u \text{ is $\vartheta$-legal} \}$ and, for $w \in \mathcal{A}^{+} \cup \mathcal{A}^{\mathbb{Z}}$, we let $\mathcal{L} (w) = \{ u \in \mathcal{A}^{+} : u \triangleleft w \}$ denote the language of $w$.

\begin{definition}
The \textsl{random substitution subshift} of a random substitution $\vartheta_{\mathbf{P}} = (\vartheta, \mathbf{P})$ is the system $(X_{\vartheta}, S)$, where $X_{\vartheta} = \{ w \in \mathcal{A}^{\mathbb{Z}} : \mathcal{L} (w) \subseteq \mathcal{L}_{\vartheta} \}$ and $S$ denotes the (left) shift map, defined by $S (w)_{i} = w_{i+1}$ for each $w \in X_{\vartheta}$. 
\end{definition}

If $\vartheta_{\mathbf{P}}$ is primitive, the corresponding sequence space $X_{\vartheta}$ is always non-empty \cite{gohlke-spindeler}.
The notation $X_{\vartheta}$ mirrors the fact that the random substitution subshift does not depend on the choice of $\mathbf{P}$.  We endow $X_{\vartheta}$ with the subspace topology inherited from $\mathcal{A}^{\mathbb{Z}}$, and since $X_{\vartheta}$ is defined in terms of a language, it is a compact $S$-invariant subspace of $\mathcal{A}^{\mathbb{Z}}$. Hence, $X_{\vartheta}$ is a subshift. For $n \in \mathbb{N}$, we write $\mathcal{L}_{\vartheta}^{n} = \mathcal{L}_\vartheta \cap \mathcal{A}^{n}$ and $\mathcal{L}^{n} (w) = \mathcal{L}(w) \cap \mathcal{A}^{n}$ to denote the subsets of $\mathcal{L}_{\vartheta}$ and $\mathcal{L} (w)$, respectively, consisting of words of length $n$.  We also note that, when $\vartheta$ is primitive, $X_{\vartheta^{k}} = X_{\vartheta}$ for all $k \in \mathbb{N}$.

The set-valued function $\vartheta$ naturally extends to $X_{\vartheta}$, where for $w = \cdots w_{-1} w_{0} w_{1} \cdots \in X_{\vartheta}$ we let $\vartheta(w)$ denotes the (infinite) set of sequences of the form $v = \cdots v_{-2} v_{-1}.v_0 v_1 \cdots$, with $v_j \in \vartheta(w_j)$ for all $j \in \mathbb{Z}$. By definition, it is easily verified that $\vartheta(X_{\vartheta}) \subset X_{\vartheta}$. Some properties of $\vartheta$ are reminiscent of continuous functions, although $\vartheta$ itself is \emph{not} a function. The following property will be useful in our discussion of intrinsic ergodicity (\Cref{SS intrinsic ergodicity}) and is also of independent interest.

\begin{lemma}\label{L compact to compact}
If $\vartheta_{\mathbf{P}} = (\vartheta, \mathbf{P})$ is a random substitution and $X \subset \mathcal{A}^{\mathbb{Z}}$ is compact, then $\vartheta(X)$ is compact.
\end{lemma}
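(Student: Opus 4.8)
The plan is to realise $\vartheta(X)$ as the continuous image of a compact space, so that compactness follows at once from the fact that continuous images of compact sets are compact. The feature to exploit is that, although $\vartheta$ is set-valued, the multivaluedness comes entirely from finitely many choices per letter: for each $a \in \mathcal{A}$ the set $\vartheta(a) = \{s^{(a,1)}, \dots, s^{(a,r_a)}\}$ is finite and every realisation has length between $1$ and $|\vartheta|$. I would therefore separate the ``which sequence in $X$'' data from the ``which realisation'' data and package both into a single point of a compact code space.

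Concretely, I would introduce the finite alphabet $\mathcal{B} = \{(a,k) : a \in \mathcal{A},\ 1 \le k \le r_a\}$ and the projection $\pi \colon \mathcal{B}^{\mathbb{Z}} \to \mathcal{A}^{\mathbb{Z}}$ sending $((a_j,k_j))_{j}$ to $(a_j)_j$, which is continuous for the product topologies. Setting $Y = \pi^{-1}(X)$ gives a closed, hence compact, subset of the compact space $\mathcal{B}^{\mathbb{Z}}$. I then define the concatenation map $\Psi \colon Y \to \mathcal{A}^{\mathbb{Z}}$ by letting $\Psi(((a_j,k_j))_j)$ be the bi-infinite sequence $\cdots s^{(a_{-1},k_{-1})} . s^{(a_0,k_0)} s^{(a_1,k_1)} \cdots$, with the block $s^{(a_0,k_0)}$ placed so as to begin at index $0$. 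Unwinding the definition of $\vartheta$ on sequences, a point lies in $\vartheta(X)$ precisely when it arises from some $w \in X$ by choosing, for each $j$, a realisation $v_j \in \vartheta(w_j)$; recording these choices as indices identifies $\vartheta(X)$ with $\Psi(Y)$. It thus remains only to check that $\Psi$ is well defined and continuous.

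Continuity is where the position bookkeeping must be handled with care, and it is the only real obstacle, since $\Psi$ stretches indices by variable amounts and is not shift-equivariant. I would argue directly from the product topology: if two points of $Y$ agree on the window $[-N,N]$, then the corresponding realisations $v_j$ coincide for all $\lvert j \rvert \le N$. Since each $\lvert v_j \rvert \ge 1$, the block $v_0 v_1 \cdots v_N$ already fills at least the output positions $0,1,\dots,N$, and symmetrically $v_{-N}\cdots v_{-1}$ fills output positions down to at least $-N$; hence the two images agree on $[-N,N]$. This shows $\Psi$ is (uniformly) continuous, so $\vartheta(X) = \Psi(Y)$ is compact. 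Note that primitivity plays no role here: the argument uses only the finiteness of each $\vartheta(a)$, the bound $|\vartheta| < \infty$, and the compactness and metrisability of $\mathcal{A}^{\mathbb{Z}}$.
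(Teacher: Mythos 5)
Your proof is correct, but it takes a different route from the one in the paper. The paper proves that $\vartheta(X)$ is closed in the compact space $\mathcal{A}^{\mathbb{Z}}$ by a direct sequential argument: given $y^{(n)} \in \vartheta(x^{(n)})$ converging to $y$, it extracts an accumulation point $x$ of the $x^{(n)}$, passes twice to subsequences so that preimages and images stabilise on windows $[-n,n]$, and then assembles a realisation witnessing $y \in \vartheta(x)$ by a diagonal/pigeonhole argument. You instead package the finitely many realisation choices per letter into the compact code space $\mathcal{B}^{\mathbb{Z}}$ with $\mathcal{B} = \{(a,k) : a \in \mathcal{A},\, 1 \le k \le r_a\}$, observe that $Y = \pi^{-1}(X)$ is compact, and exhibit $\vartheta(X)$ as the image of $Y$ under a single concatenation map $\Psi$, whose continuity follows from the elementary estimate that agreement of inputs on $[-N,N]$ forces agreement of outputs on $[-N,N]$ because every realisation has length at least one. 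The two arguments rest on the same two facts (finiteness of each $\vartheta(a)$ and compactness of the ambient product space), and your diagonal-free packaging is essentially a way of making the paper's subsequence extraction happen once and for all inside the compact space $\mathcal{B}^{\mathbb{Z}}$. What your version buys is a reusable structural object: the continuous surjection $\Psi \colon Y \to \vartheta(X)$ immediately gives compactness, but could also be used, for example, to push forward measures or to transport other topological properties of $X$ to $\vartheta(X)$; the paper's version is more self-contained and avoids introducing the auxiliary alphabet. One small point of care, which you handle correctly: the identification $\vartheta(X) = \Psi(Y)$ uses precisely the paper's definition of $\vartheta$ on bi-infinite sequences (independent choice of a realisation at every coordinate, with the block over coordinate $0$ anchored at position $0$), so no information is lost or gained in the encoding.
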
 

\begin{proof}
It suffices to show that $\vartheta(X)$ is closed. Let $(y^{(n)})_{n \in {\mathbb{N}}}$ denote a sequence in $\vartheta(X)$ and assume that this sequence converges to some $y \in \mathcal{A}^{\mathbb{Z}}$. We need to show that $y \in \vartheta(X)$. To this end, let $(x^{(n)})_{n \in \mathbb{N}}$ be a sequence in $X$ with $y^{(n)} \in \vartheta(x^{(n)})$ for all $n \in \mathbb{N}$. By compactness of $X$, this sequence has an accumulation point $x = \cdots x_{-1} x_{0} x_{1} \cdots \in X$. By restricting to an appropriate subsequence, we may assume that 
	\begin{align*}
	x^{(m)}_{[-n,n]} = x^{\phantom{(}}_{[-n,n]}
	\end{align*}
for all $m$ and $n \in {\mathbb{N}}$ with $m \geq n$.  In which case,
	\begin{align*}
	y^{(n)}_{\,[-n, n]} = w^{(n)}_{-n\phantom{]}} \cdots w^{(n)}_{-1\phantom{]}} . w^{(n)}_{0\phantom{]}} \cdots w^{(n)}_{n\phantom{]}} 
	\end{align*}
with $w^{(n)}_j \in \vartheta(x_j)$ for all $j \in \{-n, \ldots, n\}$. As $( y^{(m)} )_{m \in \mathbb{N}}$ converges to $y$, we may assume, for $n \in \mathbb{N}$,
	\begin{align*}
	y_{\,[-n, n]}  = w^{(n)}_{-n} \cdots w^{(n)}_{-1}. w^{(n)}_{0} \cdots w^{(n)}_n,
	\end{align*}
again by possibly restricting to an appropriate subsequence. By a standard diagonal argument utilising the pigeonhole principle, we can choose $w_j \in \vartheta(x_j)$ for all $j \in {\mathbb{Z}}$ such that $y = \cdots w_{-2} w_{-1}. w_0 w_1 w_2 \cdots$. Namely, we have that $y \in \vartheta(x)$.
\end{proof}

\subsection{Special classes of random substitutions}

Primitive random substitutions produce a wide variety of subshifts, including for example all topologically transitive shifts of finite type \cite{gohlke-rust-spindeler} as well as all (primitive) deterministic substitution subshifts. It is therefore reasonable to expect that further assumptions on the random substitution are required in order to obtain a more detailed control over its (measure theoretic) entropy. Indeed, there is a useful property which allows us to obtain more precise estimates that can be shown to fail in the general primitive setting. 
Recall that for $v = v_1 \cdots v_n$ the random word $\vartheta_{\mathbf{P}}(v) = \vartheta_{\mathbf{P}}(v_1) \cdots \vartheta_{\mathbf{P}}(v_n)$ can be written as a concatenation of the random variables $\vartheta_{\mathbf{P}}(v_1),\ldots,\vartheta_{\mathbf{P}}(v_n)$. In general, there might be several realisations of $(\vartheta_{\mathbf{P}}(v_1),\ldots,\vartheta_{\mathbf{P}}(v_n))$ that concatenate to the same realisation of $\vartheta_{\mathbf{P}}(v)$. In some situations this phenomenon can be excluded.

\begin{definition}
We say that $\vartheta_{\mathbf{P}}$ has \emph{unique realisation paths} if for every $v \in \mathcal L_{\vartheta}^n$ and $k \in \N$, the random variable $(\vartheta_{\mathbf{P}}^k(v_1), \ldots, \vartheta_{\mathbf{P}}^k(v_n))$ is completely determined by $\vartheta_{\mathbf{P}}^k(v)$.
\end{definition}

While the definition above is most adequate for our purposes, it is worth pointing out that the property of having unique realisation paths does not depend on the choice of $\mathbf{P}$. Indeed, it is straightforward to verify that $\vartheta_{\mathbf{P}}$ has unique realisation paths if and only if for all $v \in \mc L_{\vartheta}^n$ and $k \in \N$ the concatenation map 
\[
\vartheta^k(v_1) \times \cdots \times \vartheta^k(v_n) \to \mc L_{\vartheta}, \quad
(w_1,\ldots,w_n) \mapsto w_1 \cdots w_n
\]
is injective.

The property of having unique realisation paths might appear difficult to check in general. However, there is a general class of random substitutions that satisfy this condition and that is of relevance in the context of random tilings. In the following, we denote by a \emph{marginal} of $\vartheta_{\mathbf{P}}$ a deterministic substitution $\varrho$ on the same alphabet $\mc A$, such that $\varrho(a) \in \vartheta(a)$ for all $a \in \mc A$.

\begin{definition}
We say that a primitive random substitution $\vartheta_{\mathbf{P}}$ is \emph{geometrically compatible} if there is a real number $\lambda > 1$ and a vector $\mathbf{L}$ with strictly positive entries, such that $\mathbf{L}$ is a left eigenvector with eigenvalue $\lambda$ for all marginals of $\vartheta_{\mathbf{P}}$.
\end{definition}

In this situation, it is easy to check that $\lambda$ and $\mathbf{L}$ are indeed Perron--Frobenius data for the substitution matrix $M$ of $\vartheta_{\mathbf{P}}$.
Geometric compatibility is equivalent to the assumption that $\lambda$ and the corresponding eigenline spanned by $\mathbf{L}$ are independent of the choice of $\mathbf{P}$, which is easy to check for a given example. Moreover, it provides a natural setting in which a random substitution can be interpreted as a random inflation rule on an associated tiling dynamical system. In this geometric model, every letter $a_i$ is identified with a tile of length $\mathbf{L}_i$. This motivates the term \emph{geometrically compatible}. 

\begin{remark}
The class of geometrically compatible random substitutions contains all (primitive) constant-length random substitutions. Indeed, if $\vartheta_{\mathbf{P}}$ is of length $\ell$ we have $\lambda = \ell$ and $\mathbf{L}_i = 1$ for all $1 \leq i \leq d$, irrespective of $\mathbf{P}$.
\end{remark}

Geometric compatibility is also a generalization of primitive \emph{compatible} random substitutions. Compatibility has been a standard assumption in much recent work on random substitutions and is particularly useful in those settings, where $\vartheta$ instead of $\vartheta_{\mathbf{P}}$ is regarded as a random substitution.

\begin{definition}
We say that a random substitution $\vartheta_{\mathbf{P}} = (\vartheta, \mathbf{P})$ is \textsl{compatible} if for all $a \in \mathcal{A}$, and $u$ and $v \in \vartheta(a)$, we have $\Phi (u) = \Phi (v)$. 
\end{definition}

Observe that compatibility is independent of the choice of probabilities, and that a random substitution $\vartheta_{\mathbf{P}} = (\vartheta, \mathbf{P})$ is compatible if and only if for all $u \in \mathcal{A}^{+}$, we have that $\lvert s \rvert_{a} = \lvert t \rvert_{a}$ for all $s$ and $t \in \vartheta (u)$, and $a \in \mathcal{A}$. We write $\lvert \vartheta (u) \rvert_{a}$ to denote this common value, and let $\lvert \vartheta (u) \rvert$ denote the common length of words in $\vartheta (u)$. In which case, letting $M = M_{\vartheta_{\mathbf{P}}}$ denote the substitution matrix of $\vartheta_{\mathbf{P}}$, we have that $M_{i, j} = \lvert \vartheta (a_j) \lvert_{a_i}$ for all $a_i$ and $a_j \in \mathcal{A}$. Note that the random period doubling substitution defined in \Cref{Ex RPD} is compatible, since $\Phi (ab) = \Phi (ba) = (1,1)^{\top}$, and is primitive, since the square of its substitution matrix is positive.

The class of geometrically compatible random substitutions contains all compatible random substitutions and all constant length random substitutions but is not confined to them.

\begin{example}
Let $\vartheta_{\mathbf{P}}$ be the primitive random substitution on the alphabet $\mathcal A = \{a,b\}$ defined by
\[
\vartheta_{\mathbf{P}} \colon \begin{cases}
a \mapsto & abb,
\\ b \mapsto & \begin{cases}
a & \mbox{with probability } p,
\\bb & \mbox{with probability } 1-p.
\end{cases}
\end{cases}
\]
This random substitution is geometrically compatible with $\mathbf{L} = (2,1)^{\top}$ and $\lambda = 2$. It is neither of constant length nor compatible. 
\end{example}

\begin{example}
Let $\vartheta_{\mathbf{P}}$ be the primitive random substitution defined by
\[
\vartheta_{\mathbf{P}} \colon a \mapsto \begin{cases}
a & \mbox{with probability } p,
\\ab & \mbox{with probability } 1-p,
\end{cases}
\qquad 
b \mapsto \begin{cases}
a & \mbox{with probability } q,
\\ba & \mbox{with probability } 1-q.
\end{cases}
\]
This is neither geometrically compatible nor does it have unique realisation paths. The latter can be seen from the fact that both $(a,ba)$ and $(ab,a)$ are two different realisations of $(\vartheta_{\mathbf{P}}(a), \vartheta_{\mathbf{P}}(b))$ that give rise to the same word $aba \in \vartheta(ab)$.
\end{example}

\begin{remark}
Like primitivity, geometric compatibility is stable under taking powers of the random substitution at hand. That is, if $\vartheta_{\mathbf{P}}$ is geometrically compatible, then so is $\vartheta_{\mathbf{P}}^n$ for all $n \in \N$. This is because the Perron--Frobenius data $(\lambda,\mathbf{L})$ of $\vartheta_{\mathbf{P}}$ is independent of $\mathbf{P}$, which in turn implies that the Perron--Frobenius data $(\lambda^n,\mathbf{L})$ of $\vartheta_{\mathbf{P}}^{n}$ is independent of $\mathbf{P}$.
\end{remark}

\begin{lemma}
Every primitive, geometrically compatible random substitution has unique realisation paths.
\end{lemma}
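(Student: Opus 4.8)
The plan is to prove injectivity of the concatenation map, which the excerpt already identifies as equivalent to the unique realisation paths property. The key observation is that geometric compatibility forces every realisation of a given inflation word to have the same $\mathbf{L}$-weighted length, and since $\mathbf{L}$ has strictly positive entries this weighted length grows strictly monotonically along prefixes, so the cut points of any decomposition are rigid.

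First I would introduce the weighted length $\lVert u \rVert_{\mathbf{L}} = \mathbf{L}^{\top} \Phi(u) = \sum_{i=1}^{d} L_{i} \lvert u \rvert_{a_{i}}$ of a word $u$, where $\Phi$ is the abelianisation, and then show that for every letter $a_{j}$ and every realisation $w \in \vartheta^{k}(a_{j})$ one has $\lVert w \rVert_{\mathbf{L}} = \lambda^{k} L_{j}$. For $k = 1$ this follows directly from the definition of geometric compatibility: given $w \in \vartheta(a_{j})$, choose any marginal $\varrho$ with $\varrho(a_{j}) = w$ (assigning arbitrary images in $\vartheta(a_{i})$ to the remaining letters); since $\mathbf{L}^{\top} M_{\varrho} = \lambda \mathbf{L}^{\top}$ for the substitution matrix $M_{\varrho}$ of $\varrho$, and the $j$-th coordinate of $\mathbf{L}^{\top} M_{\varrho}$ is exactly $\lVert \varrho(a_{j}) \rVert_{\mathbf{L}} = \lVert w \rVert_{\mathbf{L}}$, we obtain $\lVert w \rVert_{\mathbf{L}} = \lambda L_{j}$. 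The general $k$ then follows by induction: any $w \in \vartheta^{k}(a_{j})$ arises as $w = w^{(1)} \cdots w^{(m)}$ with $w^{(i)} \in \vartheta(u_{i})$ for some $u = u_{1} \cdots u_{m} \in \vartheta^{k-1}(a_{j})$, and summing the level-one identity over the blocks gives $\lVert w \rVert_{\mathbf{L}} = \lambda \lVert u \rVert_{\mathbf{L}} = \lambda \cdot \lambda^{k-1} L_{j}$. (Alternatively, I could invoke the remark that $\vartheta_{\mathbf{P}}^{k}$ is again geometrically compatible with data $(\lambda^{k}, \mathbf{L})$ and apply the $k = 1$ case directly to $\vartheta^{k}$.)

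With this invariance in hand, I would establish injectivity of the concatenation map. Suppose $(w_{1}, \ldots, w_{n})$ and $(w_{1}', \ldots, w_{n}')$ in $\vartheta^{k}(v_{1}) \times \cdots \times \vartheta^{k}(v_{n})$ satisfy $W := w_{1} \cdots w_{n} = w_{1}' \cdots w_{n}'$. For the common word $W$, the prefix weighted length function $\ell \mapsto \lVert W_{[1,\ell]} \rVert_{\mathbf{L}}$ is strictly increasing in the number of letters $\ell$, because adjoining the letter $a_{i}$ increases the value by $L_{i} > 0$; hence it is injective on $\{0, 1, \ldots, \lvert W \rvert\}$. Since $w_{1}$ and $w_{1}'$ are both prefixes of $W$ with the same weighted length $\lambda^{k} L_{v_{1}}$ (the entry of $\mathbf{L}$ indexed by the letter $v_{1}$), they must be the same prefix, so $w_{1} = w_{1}'$. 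Cancelling this common prefix reduces the problem to $w_{2} \cdots w_{n} = w_{2}' \cdots w_{n}'$, and iterating yields $w_{i} = w_{i}'$ for all $i$.

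The main obstacle, and really the only non-formal point, is establishing the weighted-length invariance cleanly: one must check that every single realisation in $\vartheta^{k}(a_{j})$ (not merely those arising from iterated marginals) is pinned to weighted length $\lambda^{k} L_{j}$. This is precisely where the strength of geometric compatibility is used, namely that $\mathbf{L}$ is an eigenvector for \emph{all} marginals with the eigenline independent of $\mathbf{P}$, rather than the weaker statement about the expected substitution matrix $M$ alone. Everything after that is a consequence of the positivity of $\mathbf{L}$ and is purely combinatorial.
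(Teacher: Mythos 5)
Your proof is correct and follows essentially the same route as the paper: both arguments use a marginal of $\vartheta_{\mathbf{P}}$ to pin the $\mathbf{L}$-weighted length of each level-$k$ inflation word to $\lambda^{k} L_{j}$, and then use the strict positivity of $\mathbf{L}$ to show the cut points of the decomposition are uniquely determined. The only cosmetic difference is that the paper reduces immediately to $k=1$ via the stability of geometric compatibility under powers, which you note as an alternative.
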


\begin{proof}
Let $\vartheta_{\mathbf{P}}$ be primitive and geometrically compatible. Since the same holds for $\vartheta_{\mathbf{P}}^k$, we may restrict to the case $k = 1$ in the following.
Let $v \in \mathcal L_{\vartheta}^{n}$ and let $u$ be a realisation of the random word
\[
\vartheta_{\mathbf{P}}(v) = \vartheta_{\mathbf{P}}(v_1) \cdots \vartheta_{\mathbf{P}}(v_n)
\]
and $(u^1, \ldots, u^n)$ a corresponding realisation of $(\vartheta_{\mathbf{P}}(v_1),\ldots, \vartheta_{\mathbf{P}}(v_n))$ satisfying 
\[
u = u^1 \cdots u^n.
\]
Let $M_1$ be the substitution matrix of a marginal of $\vartheta_{\mathbf{P}}$ with $v_1 \mapsto u_1$.
Since $\mathbf{L}$ has strictly positive entries, there is a unique $1\leq m \leq |u|$ such that
\[
\mathbf{L} \Phi(u_{[1,m]}) = \mathbf{L} \Phi(u^1) = \mathbf{L} M_1 \Phi(v_1) = \lambda \mathbf{L}_{v_1}.
\]
This determines $u^1 = u_{[1,m]}$ unambiguously. Inductively, we find that $u^k$ is uniquely determined by $u$ for all $1\leq k \leq n$. 
\end{proof}

For the reader's convenience, we summarize the relation between different characterisations of primitive random substitutions in Figure~\ref{FIG:1}.

\begin{figure}

\centering

\begin{tikzpicture} 

\node (1) at (-2.000000,0.000000) {Constant Length};

\node (2) at (2.000000,0.000000) {Compatibility}; 

\node (3) at (0.000000,-1.50000) {Geometric Compatibility}; 

\node (4) at (0.000000,-3.00000) {Unique Realisation Paths};

\draw [shorten >=0.5em,shorten <=0em,-implies, double equal sign distance] (1) -- (3);

\draw [shorten >=0.5em,shorten <=0em,-implies, double equal sign distance] (2) -- (3);

\draw [shorten >=0.3em,shorten <=0.3em,-implies, double equal sign distance] (3) -- (4);

\end{tikzpicture}

\caption{Implication diagram for some conditions on primitive random substitutions.} \label{FIG:1}

\end{figure}
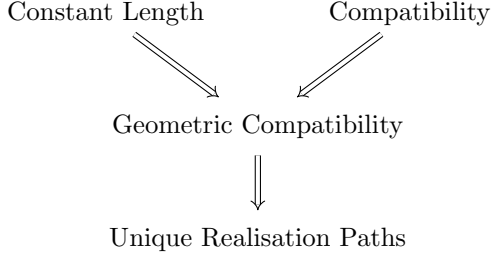

\subsection{Topological entropy}\label{SS top entropy}

The non-trivial topological entropy of random substitution subshifts distinguishes them from subshifts of deterministic substitutions, which always have zero topological entropy, see \cite{queffelec}. The topological entropy was calculated for several families of random substitutions in \cite{godreche-luck,nilsson}. There, the topological entropy was calculated from the growth rate of inflation words. This approach was unified by Gohlke \cite{gohlke}, where the notion of \textsl{inflation word entropy} was introduced for compatible primitive random substitutions and shown to equal the topological entropy of the corresponding subshift.

For completeness, let us take a moment to recall the definition of the topological entropy of a subshift, see \cite{MR1963683,MR648108} for further details. Given a subshift $(X,S)$, we define the language of the subshift by $\mc L(X) = \{x_{[j,k]} : x \in X, j\leq k \}$ and for each $n \in \mathbb{N}$, we let $\mathcal{L}^{n} (X)$ denote the set of all words of length $n$ in $\mathcal{L} (X)$. The \textsl{topological entropy} $h_{\text{top}}(X)$ of the system $(X, S)$ is defined to be the quantity
	\begin{align}\label{eq:top_entropy_constant}
	h_{\text{top}}(X) = \lim_{n \to \infty} \frac{1}{n} \log( \# \mathcal{L}^{n} (X)).
	\end{align}
Given a primitive and compatible random substitution $\vartheta_{\mathbf{P}} = (\vartheta, \mathbf{P})$ over the alphabet $\mathcal{A} = \{ a_1, \ldots, a_d \}$, we have that $\mathcal{L} (X_{\vartheta}) = \mathcal{L}_{\vartheta}$. For each $m \in \mathbb{N}$, let $\mathbf{q}_{m} = (q_{m, 1}, \dots, q_{m, d})$ denote the vector defined by
	\begin{align}\label{eq:refece_page_17}
	q_{m,i} = \log (\# \vartheta^{m} (a_i))
	\end{align}
for $i \in \{ 1, \ldots, d \}$. When the limit exists, the inflation word entropy of type $i$ is defined by
	\begin{align*}
	t_{i} (\vartheta_{\mathbf{P}}) = t_{i} (\vartheta) = \lim_{m \rightarrow \infty} \frac{q_{m,i}}{\lvert \vartheta^{m} (a_i) \rvert}.
	\end{align*}

\begin{theorem}[{\cite[Theorem~17]{gohlke}}]\label{T topological inflation entropy}
Let $\vartheta_{\mathbf{P}} = (\vartheta, \mathbf{P})$ be a primitive and compatible random substitution over the alphabet $\mathcal{A} = \{ a_1, \ldots, a_d \}$ with cardinality $d \in \mathbb{N}$. Let $\lambda$ denote the Perron--Frobenius eigenvalue of $\vartheta_{\mathbf{P}}$, and let $\mathbf{R}$ be the right Perron--Frobenius eigenvector of $\vartheta_{\mathbf{P}}$. For all $i \in \{ 1, \ldots, d \}$, the inflation word entropy $t_i (\vartheta)$ exists, is independent of $i$, and is equal to the topological entropy $h_{\text{top}}(X_{\vartheta})$ of the system $(X_{\vartheta}, S)$. Moreover, for all $m \in \mathbb{N}$, we have
	\begin{align}\label{E top entropy bounds}
	\frac{1}{\lambda^{m}} \mathbf{q}_{m}^{\top} \mathbf{R} \leq t_{i} (\vartheta) 
	= h_{\text{top}} (X_{\vartheta}) \leq \frac{1}{\lambda^{m}-1} \mathbf{q}_{m}^{\top} \mathbf{R},
	\end{align}
where the lower bounds are non-decreasing in $m$. Further, $h_{\text{top}}(X_{\vartheta})$ can be calculated as
	\begin{align*}
	h_{\text{top}} (X_{\vartheta})
	= t_{i} (\vartheta)
	= \lim_{m \rightarrow \infty} \frac{1}{\lambda^{m}} \mathbf{q}_{m}^{\top} \mathbf{R}
	= \sup_{m \in \mathbb{N}} \frac{1}{\lambda^{m}} \mathbf{q}_{m}^{\top} \mathbf{R}.
	\end{align*}
\end{theorem}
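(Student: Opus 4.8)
The plan is to prove all the inflation-word statements first, producing a single limit $t$ together with the two-sided bounds, and only afterwards to identify $t$ with $h_{\mathrm{top}}(X_\vartheta)$ by counting in the language. Abbreviate $b_m = \mathbf{q}_m^\top \mathbf{R}$ and $a_m = \lambda^{-m} b_m$. The engine is an exact count of realisation paths. Since $\vartheta_{\mathbf{P}}$ is primitive and compatible it has unique realisation paths (by the lemma above), and compatibility fixes $\Phi(u) = M^m e_i$ for every $u \in \vartheta^m(a_i)$. Hence for a fixed realisation $u \in \vartheta^m(a_i)$ the concatenation map is injective, so $\#\vartheta^k(u) = \prod_j (\#\vartheta^k(a_j))^{(M^m)_{j,i}}$, whose logarithm equals $((M^m)^\top \mathbf{q}_k)_i$ and does not depend on the chosen $u$. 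Writing $\vartheta^{m+k}(a_i) = \vartheta^k(\vartheta^m(a_i))$ and summing over the $\#\vartheta^m(a_i)$ inner realisations (a union bound, as distinct ancestors may yield the same word) gives the upper recursion $q_{m+k,i} \le q_{m,i} + ((M^m)^\top \mathbf{q}_k)_i$, while keeping a single ancestor already gives the lower recursion $q_{m+k,i} \ge ((M^m)^\top \mathbf{q}_k)_i$.

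Taking the inner product with $\mathbf{R}$ and using $M^m \mathbf{R} = \lambda^m \mathbf{R}$ collapses these to the scalar inequalities $\lambda^m b_k \le b_{m+k} \le b_m + \lambda^m b_k$. The lower inequality with the inner step a single application of $\vartheta$, namely $b_{m+1} \ge \lambda b_m$, shows that $a_m$ is non-decreasing, which is the monotonicity claim; iterating the upper inequality gives $b_{nm} \le b_m(\lambda^{nm}-1)/(\lambda^m-1)$, hence $a_{nm} \le b_m/(\lambda^m-1)$, so $(a_m)$ is bounded. A bounded monotone sequence converges, and its limit $t := \lim_m a_m = \sup_m a_m$ satisfies $a_m \le t \le b_m/(\lambda^m-1)$ for every $m$; this is exactly the chain $\lambda^{-m}\mathbf{q}_m^\top\mathbf{R} \le t \le (\lambda^m-1)^{-1}\mathbf{q}_m^\top\mathbf{R}$ together with the final $\lim$/$\sup$ formula.

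To upgrade to $t_i(\vartheta) = \lim_m q_{m,i}/\lvert\vartheta^m(a_i)\rvert$ I would feed Perron--Frobenius asymptotics into the two recursions: under $\lVert\mathbf{R}\rVert_1 = \mathbf{L}^\top\mathbf{R} = 1$ one has $(M^m)^\top \sim \lambda^m \mathbf{L}\mathbf{R}^\top$, so $((M^m)^\top \mathbf{q}_k)_i \sim \lambda^m L_i b_k$ and $\lvert\vartheta^m(a_i)\rvert = (\mathbf{1}^\top M^m)_i \sim \lambda^m L_i$. The lower recursion then yields $\liminf_n q_{n,i}/\lvert\vartheta^n(a_i)\rvert \ge a_k$ for every $k$, hence $\ge t$; the upper recursion together with the crude bound $q_{n-k,i} \le \lvert\vartheta^{n-k}(a_i)\rvert\log d$ yields $\limsup_n q_{n,i}/\lvert\vartheta^n(a_i)\rvert \le a_k + \lambda^{-k}\log d$, and letting $k \to \infty$ gives $\le t$. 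Thus each $t_i$ exists, is independent of $i$, and equals $t$. For $t = h_{\mathrm{top}}(X_\vartheta)$, recall $\mathcal{L}(X_\vartheta) = \mathcal{L}_\vartheta$. The lower bound is immediate: the $\exp(q_{m,i})$ distinct words of $\vartheta^m(a_i)$ are legal of length $\lvert\vartheta^m(a_i)\rvert$, so $\lvert\vartheta^m(a_i)\rvert^{-1}\log\#\mathcal{L}^{\lvert\vartheta^m(a_i)\rvert}(X_\vartheta) \ge q_{m,i}/\lvert\vartheta^m(a_i)\rvert \to t$, and since $n^{-1}\log\#\mathcal{L}^n(X_\vartheta)$ converges by subadditivity, its value equals this subsequential limit, giving $h_{\mathrm{top}} \ge t$. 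For the upper bound I would choose $m = m(n)$ with $\lambda^m \to \infty$ but $\lambda^m = o(n)$: every length-$n$ legal word lies in a level-$m$ super-tiling and is determined by a seed word of $B = O(n/\lambda^m) = o(n)$ letters (at most $d^B = \exp(o(n))$ choices), the $\vartheta^m$-realisations of those letters, and an offset $\le n$; since individual tiles have length $o(n)$, the realisation factor is at most $\exp(\sum_{\text{tiles}} q_{m,\cdot}) = \exp(t\,n\,(1+o(1)))$, whence $h_{\mathrm{top}} \le t$.

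The main obstacle is the tension between counting \emph{inflation words} and counting \emph{subwords of the subshift}. On the inflation-word side the lower recursion is clean precisely because compatibility forces unique realisation paths, so a single ancestor already contributes the full product of realisation counts; the upper recursion must instead absorb distinct ancestors collapsing to the same word, which is why it carries the extra $q_{m,i}$ term. In the passage to $h_{\mathrm{top}}$ the delicate point is to keep the boundary contributions (partial overhanging tiles, the seed word, and the offset) sub-exponential, which is exactly what dictates the choice of $m(n)$ growing with $n$ while leaving the tile length negligible compared with $n$.
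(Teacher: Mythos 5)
This theorem is quoted from \cite[Theorem~17]{gohlke} and the paper gives no proof of it, so there is no internal argument to compare against; your reconstruction is correct and follows the same renormalisation strategy as the cited source. The two recursions you derive, $\lambda^m b_k \leq b_{m+k} \leq b_m + \lambda^m b_k$ for $b_m = \mathbf{q}_m^{\top}\mathbf{R}$ (valid because compatibility forces unique realisation paths and fixes $\Phi(u) = M^m e_i$ on $\vartheta^m(a_i)$), are precisely the counting analogues of the entropy recursions the paper establishes in \Cref{PROP:upper-iteration} and \Cref{PROP:lower-iteration}, and your super-tile covering argument with $\lambda^m \to \infty$, $\lambda^m = o(n)$ mirrors the decomposition used in \Cref{PROP:upper-bound} and \Cref{PROP:lower-bound}. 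The only schematic step, the bound $h_{\mathrm{top}} \leq t$, does close as written: since $t_j = t$ is established beforehand, one has $q_{m,j} \leq (t+\varepsilon)\lvert\vartheta^m(a_j)\rvert$ for all large $m$, so the realisation factor is at most $\exp\bigl((t+\varepsilon)(n + o(n))\bigr)$ while the seed-word and offset counts contribute only $\exp(o(n))$.
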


In general, it is difficult to obtain a closed form formula for the topological entropy using \Cref{T topological inflation entropy}. The difficulty lies in quantifying the overlaps of sets of the form $\vartheta^{m} (u)$, for $u \in \vartheta(a_i)$. However, if the random substitution satisfies either of two mild conditions, then it is possible to obtain a closed form expression for the topological entropy using \Cref{T topological inflation entropy}.

\begin{definition}
A random substitution $\vartheta_{\mathbf{P}} = (\vartheta, \mathbf{P})$ is said to satisfy the \textsl{identical set condition} if
	\begin{align*}
	u \; \text{and} \; v \in \vartheta (a) &\implies \vartheta^{k} (u) = \vartheta^{k} (v)
	\end{align*}
for all $a \in \mathcal{A}$ and $k \in \mathbb{N}$. It is said to satisfy the \textsl{disjoint set condition} if
	\begin{align*}
	u  \; \text{and} \;  v \in \vartheta (a) \; \text{with} \; u \neq v &\implies \vartheta^{k} (u) \cap \vartheta^{k} (v) = \varnothing
	\end{align*}
for all $a \in \mathcal{A}$ and $k \in \mathbb{N}$.
\end{definition}

\begin{remark}
An easy way to satisfy the identical set condition is to assume that $\vartheta(a) = \vartheta(b)$ for all $a, b \in \mc A$. In this case, the corresponding random substitution subshift is a coded shift, generated by the set $\vartheta(a)$. However, this structure is not \emph{necessary} for the identical set condition as one may see from the example $\vartheta \colon a,b \mapsto \{abc,bac\}, c \mapsto \{a\}$. For further discussion of the identical set condition and the disjoint set condition we refer to the examples in \Cref{S examples} and \cite{gohlke}.
\end{remark}

\begin{corollary}[{\cite[Corollary~18]{gohlke}}]\label{C IS DS topological entropy}
Assume the setting of \Cref{T topological inflation entropy}. If $\vartheta_{\mathbf{P}}$ satisfies the identical set condition, then
	\begin{align*}
	h_{\text{top}} (X_{\vartheta}) = \frac{1}{\lambda} \mathbf{q}_{1}^{\top} \mathbf{R} = \frac{1}{\lambda} \sum_{i=1}^{d} R_{i} \log(\# \vartheta (a_i)).
	\end{align*}
If $\vartheta_{\mathbf{P}}$ satisfies the disjoint set condition, then
	\begin{align*}
	h_{\text{top}} (X_{\vartheta}) = \frac{1}{\lambda-1} \mathbf{q}_{1}^{\top} \mathbf{R} = \frac{1}{\lambda-1} \sum_{i=1}^{d} R_{i} \log(\# \vartheta (a_i)).
	\end{align*}
\end{corollary}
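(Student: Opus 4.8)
The plan is to exploit the supremum/limit characterisation of $h_{\text{top}}(X_\vartheta)$ from \Cref{T topological inflation entropy} by computing the quantity $\mathbf{q}_m^\top \mathbf{R}$ \emph{exactly} under each of the two conditions. In both cases I expect $\mathbf{q}_m$ to satisfy a clean linear recurrence driven by the transpose of the substitution matrix $M$, so that $\mathbf{q}_m^\top \mathbf{R}$ collapses, thanks to $M\mathbf{R} = \lambda \mathbf{R}$, to a closed form in $\lambda$ and $\mathbf{q}_1^\top\mathbf{R}$. Feeding this back into the bounds and the supremum formula of \Cref{T topological inflation entropy} then produces the stated expressions.

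The main ingredient is a counting identity for compatible substitutions. First I would record that for any legal word $u = u_1 \cdots u_n$ the concatenation map $\vartheta^m(u_1) \times \cdots \times \vartheta^m(u_n) \to \vartheta^m(u)$ is injective: since $\vartheta_{\mathbf{P}}$ is compatible, each $\vartheta^m(u_j)$ consists of words of a single, deterministic length, so the cut points of any realisation of $\vartheta^m(u)$ are forced and the decomposition into the pieces $\vartheta^m(u_j)$ is unique (this is precisely the unique realisation paths property, which compatibility guarantees). Consequently $\#\vartheta^m(u) = \prod_{j=1}^n \#\vartheta^m(u_j)$, and taking logarithms gives $\log \#\vartheta^m(u) = \mathbf{q}_m^\top \Phi(u)$. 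Moreover, compatibility forces every $u \in \vartheta(a_i)$ to share the same abelianisation, namely the $i$-th column of $M$, so $\#\vartheta^m(u)$ takes a common value $N_{m,i}$ for all $u \in \vartheta(a_i)$, with $\log N_{m,i} = (M^\top \mathbf{q}_m)_i$.

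With this in hand the two conditions act on the decomposition $\vartheta^{m+1}(a_i) = \bigcup_{u \in \vartheta(a_i)} \vartheta^m(u)$ in opposite ways. Under the identical set condition all the sets in this union coincide, so $\#\vartheta^{m+1}(a_i) = N_{m,i}$, giving the recurrence $\mathbf{q}_{m+1} = M^\top \mathbf{q}_m$. Pairing with $\mathbf{R}$ and using $M\mathbf{R} = \lambda \mathbf{R}$ yields $\mathbf{q}_m^\top \mathbf{R} = \lambda^{m-1}\mathbf{q}_1^\top \mathbf{R}$, so $\lambda^{-m}\mathbf{q}_m^\top\mathbf{R} = \lambda^{-1}\mathbf{q}_1^\top\mathbf{R}$ for every $m$ and the supremum formula gives $h_{\text{top}}(X_\vartheta) = \lambda^{-1}\mathbf{q}_1^\top\mathbf{R}$. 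Under the disjoint set condition the union is disjoint, so $\#\vartheta^{m+1}(a_i) = \#\vartheta(a_i)\,N_{m,i}$, giving $\mathbf{q}_{m+1} = \mathbf{q}_1 + M^\top\mathbf{q}_m$. Setting $x_m = \mathbf{q}_m^\top\mathbf{R}$ this becomes the scalar recurrence $x_{m+1} = \mathbf{q}_1^\top\mathbf{R} + \lambda x_m$ with $x_1 = \mathbf{q}_1^\top\mathbf{R}$, whose solution $x_m = \frac{\lambda^m-1}{\lambda-1}\mathbf{q}_1^\top\mathbf{R}$ makes the upper bound $(\lambda^m-1)^{-1}\mathbf{q}_m^\top\mathbf{R} = (\lambda-1)^{-1}\mathbf{q}_1^\top\mathbf{R}$ constant in $m$; the limit formula then gives $h_{\text{top}}(X_\vartheta) = \lim_m \lambda^{-m}x_m = (\lambda-1)^{-1}\mathbf{q}_1^\top\mathbf{R}$. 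In each case $\mathbf{q}_1^\top\mathbf{R} = \sum_{i=1}^d R_i\log(\#\vartheta(a_i))$ by the definition of $\mathbf{q}_1$, which is exactly the claimed formula.

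The step I expect to require the most care is the counting identity $\#\vartheta^m(u) = \prod_j \#\vartheta^m(u_j)$, that is, the injectivity of concatenation: it is here that compatibility (rather than mere primitivity) is genuinely used, and one must verify that distinct tuples of realisations cannot accidentally concatenate to the same word. Once this injectivity and the uniform-abelianisation observation are secured, the derivation of the two recurrences and their evaluation against $\mathbf{R}$ are routine.
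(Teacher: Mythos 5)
Your proposal is correct and is essentially the standard derivation of this result: the paper itself imports the corollary from \cite[Corollary~18]{gohlke} without reproving it, and your key recursion in the disjoint-set case is exactly the identity $\mathbf{q}_{m}^{\top} = \mathbf{q}_{1}^{\top}\sum_{k=0}^{m-1}M^{k}$ that the paper later quotes as \cite[Lemma~10]{gohlke}. The counting identity $\#\vartheta^{m}(u)=\prod_{j}\#\vartheta^{m}(u_{j})$ via forced cut points is precisely the unique-realisation-paths argument the paper records for (geometrically) compatible substitutions, so no gap remains.
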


Thus, if $\vartheta_{\mathbf{P}}$ satisfies the identical set condition, then the topological entropy of its subshift achieves the lower bound given in \eqref{E top entropy bounds} with $m=1$, and if $\vartheta_{\mathbf{P}}$ satisfies the disjoint set condition, then it achieves the upper bound given in \eqref{E top entropy bounds} with $m=1$.  In fact, one can show that these bounds are attained precisely when $\vartheta_{\mathbf{P}}$ satisfies the identical/disjoint set condition. The random period doubling substitution defined in \Cref{Ex RPD} satisfies the disjoint set condition. Hence, it follows by \Cref{C IS DS topological entropy} that the corresponding subshift has topological entropy equal to $\log(2^{2/3})$, noting that $\lambda = 2$ and $\mathbf{R} = (\frac{2}{3},\frac{1}{3})^{\top}$.

\subsection{Frequency measures}\label{SS frequency measures}

For $v \in \mathcal{L} (X)$ and $m \in \mathbb{Z}$, we define the cylinder set of $v$ at position $m$ by
	\begin{align*}
	[v]_{m} = \{ w \in X : w_{m + i} = v_{i} \text{ for all } 0 \leq i \leq \lvert v \rvert - 1 \}
	\end{align*}
and set $[v] = [v]^{}_0$ for convenience.
The union of the collection of cylinder sets that specify the zeroth position,
	\begin{align*}
	\xi (X) = \{ [v]_{m} : v \in \mathcal{L}_{\vartheta}, \, 1 - \lvert v \rvert \leq m \leq 0 \} \},
	\end{align*}
with $\{ \varnothing\}$ forms a semi-ring of sets, which generates the Borel $\sigma$-algebra $\mathcal{B} (X)$.  Hence, any content with mass one defined on $\xi (X) \cup \{ \varnothing \}$ extends uniquely to a probability measure on $\mathcal{B} (X)$ by the Hahn-Kolmogorov extension theorem. As we will see shortly, frequency measures are defined in this manner. 

Given a primitive random substitution $\vartheta_{\mathbf{P}}$, the \textsl{expected frequency} of a word $v \in \mathcal{L}_{\vartheta}$ is defined by
	\begin{align*}
	\text{freq} (v)
	= \lim_{k \rightarrow \infty} \frac{\mathbb{E} [ \lvert \vartheta_{\mathbf{P}}^{k} (a) \rvert_{v} ] }{ \mathbb{E} [\lvert \vartheta_{\mathbf{P}}^{k} (a)\rvert ]},
	\end{align*}
where this limit is independent of the choice of $a \in \mathcal{A}$. In fact, we have the stronger property that the word frequencies exist $\mathbb{P}$-almost surely in the limit of large inflation words and are given by $\text{freq}(v)$ for all $v \in \mathcal{L}_{\vartheta}$, see \cite{gohlke-spindeler} for further details. It turns out that these frequencies naturally define an ergodic measure supported on $X_{\vartheta}$.

\begin{prop}[{\cite[Proposition~5.3, Theorem~5.9]{gohlke-spindeler}}]\label{P frequency measure}
Let $\vartheta_{\mathbf{P}}$ be a primitive random substitution with subshift $X_{\vartheta} \neq \varnothing$. Define $\mu_{\mathbf{P}} \colon \xi (X_{\vartheta}) \cup \{ \varnothing \} \rightarrow [0,1]$ by $\mu_{\mathbf{P}} (\varnothing) = 0$, $\mu (X_{\vartheta}) = 1$, and $\mu_{\mathbf{P}} ([v]_{m}) = \text{freq} (v)$ for $v \in \mathcal{L}_{\vartheta}$ and $m \in \{ 1 - \lvert v \rvert, 2 - \lvert v \rvert, \ldots, 0 \}$.  The set function $\mu_{\mathbf{P}}$ is a content with mass one which extends uniquely to a shift-invariant ergodic probability measure on $\mathcal{B} (X_{\vartheta})$.
\end{prop}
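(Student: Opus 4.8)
The plan is to proceed in three stages: first establish that the expected word frequencies are well defined and position independent, then verify the content and extension properties, and finally prove ergodicity, which I expect to be the substantial part.

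\emph{Stage 1 (existence and independence of $\text{freq}(v)$).} Write $c_k(a,v) = \mathbb{E}[\lvert \vartheta_{\mathbf{P}}^k(a)\rvert_v]$ and $\ell_k(a) = \mathbb{E}[\lvert \vartheta_{\mathbf{P}}^k(a)\rvert] = \sum_b (M^k)_{b,a}$. By Perron--Frobenius theory applied to the primitive matrix $M$, one has $M^k = \lambda^k \mathbf{R}\mathbf{L}^\top + O((\lambda')^k)$ for some $\lambda' < \lambda$, so $\ell_k(a) \sim \lambda^k L_a$ using $\lVert \mathbf{R} \rVert_1 = 1$. For the numerator I would set up a renewal-type recursion: decomposing $\vartheta^{k}(a) = \vartheta(\vartheta^{k-1}(a))$ and using that distinct letters are substituted \emph{independently}, each occurrence of $v$ in a realisation either lies inside a single image $\vartheta(b)$ or straddles the boundary between consecutively substituted letters. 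Taking expectations, $c_k(a,v)$ satisfies a linear recursion driven by $M$ together with lower-order boundary contributions controlled by frequencies of shorter windows at level $k-1$. Dividing by $\lambda^k$ and invoking the spectral gap shows that $c_k(a,v)/\ell_k(a)$ converges; since the leading term is proportional to $\mathbf{L}$ in the $a$-variable, the factor $L_a$ cancels and the limit $\text{freq}(v)$ is independent of $a$.

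\emph{Stage 2 (content, extension, invariance).} The key algebraic identities are the deconcatenation relations $\text{freq}(v) = \sum_{a \in \mathcal{A}} \text{freq}(av) = \sum_{a \in \mathcal{A}} \text{freq}(va)$ together with the normalisation $\sum_{a \in \mathcal{A}} \text{freq}(a) = 1$. These follow from $\lvert u \rvert_v = \sum_{a} \lvert u\rvert_{va} + \mathbf{1}\{u \text{ ends with } v\}$ and its mirror image, since the single boundary term lies in $\{0,1\}$ and is hence negligible after dividing by $\ell_k(a) \sim \lambda^k L_a$. These relations are precisely the compatibility conditions needed for $\mu_{\mathbf{P}}$ to be finitely additive on the semi-ring $\xi(X_\vartheta)$, because refining a cylinder by one coordinate on either side splits it exactly into the corresponding one-letter extensions, which remain in $\xi(X_\vartheta)$. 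Finite additivity then upgrades to $\sigma$-additivity for free: cylinders are clopen, hence compact, so any countable disjoint cover of a cylinder by cylinders is in fact finite. The Hahn--Kolmogorov theorem now yields a unique Borel probability measure. Shift-invariance is immediate on the generators, since $S^{-1}[v]_m = [v]_{m+1}$ and both are assigned the value $\text{freq}(v)$; by uniqueness of the extension, $\mu_{\mathbf{P}} \circ S^{-1} = \mu_{\mathbf{P}}$.

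\emph{Stage 3 (ergodicity --- the main obstacle).} I would prove ergodicity by establishing Ces\`aro-decorrelation of cylinders, namely that for all $u, v \in \mathcal{L}_\vartheta$,
\[
\frac{1}{N}\sum_{n=0}^{N-1} \mu_{\mathbf{P}}([u]\cap S^{-n}[v]) \longrightarrow \mu_{\mathbf{P}}([u])\,\mu_{\mathbf{P}}([v]),
\]
which characterises ergodicity. The mechanism is that $\mu_{\mathbf{P}}([u]\cap S^{-n}[v])$ is the expected frequency of seeing $u$ at the origin and $v$ at displacement $n$; realising both windows as descendants in the substitution tree, for a density-one set of $n$ the two windows are generated inside distinct level-$k$ inflation tiles, whose substitution choices are independent by the very definition of a random substitution. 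Averaging over $n$ therefore factorises the joint frequency into the product of the individual frequencies. The technical heart --- and the step I expect to be hardest --- is the uniform bookkeeping of the overlap and tile-boundary terms across all renormalisation levels $k$, ensuring that the proportion of displacements $n \leq N$ for which the two windows share a common ancestor tile vanishes as $k, N \to \infty$, with errors controlled by the spectral gap of $M$. An alternative route is to show directly that $\mu_{\mathbf{P}}$-almost every $x$ is generic, that is $\frac{1}{N}\sum_{n<N}\mathbf{1}_{[v]}(S^n x) \to \text{freq}(v)$; combined with the almost-sure constancy this forces the invariant $\sigma$-algebra to be trivial, but this route demands the almost-sure law of large numbers for the underlying branching process and carries comparable technical overhead.
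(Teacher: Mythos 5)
A point of orientation first: the paper does not prove this proposition at all --- it is imported verbatim from \cite[Prop.~5.3, Thm.~5.9]{gohlke-spindeler} --- so there is no in-paper proof to compare against. Your outline follows the same broad strategy as that reference (frequencies via a Perron--Frobenius analysis of induced substitution data, then ergodicity via decorrelation or genericity), but as written it contains two genuine gaps. In Stage 1, the recursion for $c_k(a,v)$ is not closed in the form you invoke: an occurrence of $v$ in $\vartheta(\vartheta^{k-1}(a))$ can straddle up to $|v|$ consecutive inflation tiles, so $c_k(a,v)$ is a linear function of the expected counts $\mathbb{E}[\lvert\vartheta_{\mathbf{P}}^{k-1}(a)\rvert_w]$ for \emph{all} legal $w$ with $\lvert w\rvert\leq\lvert v\rvert$, not of $c_{k-1}(a,v)$ alone. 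The relevant linear map is the induced substitution matrix $M_{|v|}$ on the alphabet $\mathcal{L}_\vartheta^{|v|}$ (the object the paper alludes to in Section~3.2), and that matrix is in general \emph{not} primitive, because positional constraints prevent every length-$n$ word from occurring in the image of every other at an admissible offset. So ``invoking the spectral gap of $M$'' does not suffice; one must separately establish that $M_{|v|}$ has $\lambda$ as a simple dominant eigenvalue with an appropriate positive eigenvector, which is precisely where the work in the cited reference lies. Stage 2 is essentially correct (the only small omission is that $S^{-1}[v]_0=[v]_1$ leaves the semi-ring, so invariance on generators must be checked via the decomposition $[v]_1=\bigsqcup_a[av]_0$ and the deconcatenation identity).

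The more serious gap is in Stage 3: the factorisation argument as described is circular. Independence of the substitution randomness in two distinct level-$k$ tiles only gives the factorisation \emph{conditionally on the types of those two tiles}; the joint law of the pair of tile types at separation $\approx n/\lambda^k$ is governed by the pair correlations of the level-$(m-k)$ word --- that is, by exactly the kind of joint frequency you are trying to control, one scale up. Closing the argument requires an induction over scales whose base case is decorrelation at the \emph{letter} level, supplied by $\frac{1}{N}\sum_{n<N}(M\text{-pair correlations at lag }n)\to R_uR_v$ via the spectral gap of $M$, together with uniform propagation of the error down the hierarchy (plus control of the vanishing proportion of lags for which the two windows share an ancestor tile). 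You correctly identify this as ``the technical heart'' but do not carry it out, so the proposal remains a strategy rather than a proof; the alternative route you mention, almost-sure genericity from a law of large numbers for the branching process, is in fact closer to how \cite{gohlke-spindeler} actually proceeds and carries a comparable burden.
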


We call the measure $\mu_{\mathbf{P}}$ defined in \Cref{P frequency measure} the \textsl{frequency measure} corresponding to the random substitution $\vartheta_{\mathbf{P}}$. Alternatively, frequency measures can be defined in terms of the right Perron--Frobenius eigenvector of a sequence of induced random substitutions (treating words as letters), which encode information on word frequencies; in particular,
	\begin{align}\label{eq:freq_PF_right}
	\mu_{\mathbf{P}}([a]) = \text{freq} (a) = R_{a}
	\quad \text{and} \quad 
	\lim_{k \to \infty} \frac{\mathbb{E}[\lvert \vartheta_{\mathbf{P}}^{k} (a) \rvert]}{\mathbb{E}[\lvert \vartheta_{\mathbf{P}}^{k-1} (a) \rvert]} = \lambda,
	\end{align}
for $a \in \mathcal{A}$ -- see \cite{gohlke-spindeler} for further details.

Observe that frequency measures are dependent on the probabilities of the substitution. As such, for the subshift of a primitive random substitution that is non-deterministic, there exist uncountably many frequency measures supported on this subshift \cite{gohlke-spindeler}. In contrast, the subshift of a primitive deterministic substitution has precisely one frequency measure, which is the unique ergodic measure \cite{queffelec}.

\section{Measure theoretic entropy}\label{S MT entropy}

If $T$ is an invertible measure preserving transformation of a probability space $(X, \mathcal{B}, \nu)$ and if $\xi$ is a finite measurable partition of $X$ with $\bigvee_{i \in \mathbb{Z}} T^{-i} (\xi) = \mathcal{B}$, up to null sets, then we define the \textsl{entropy} $h(T, \nu)$ of $\nu$ with respect to $T$ by
	\begin{align*}
	h(T, \nu) = \lim_{n \to \infty} \frac{1}{2n} \sum_{A \in \xi_{n}} \!-\nu(A) \log(\nu(A)),
	\end{align*}
where $\xi_{k} = \bigvee_{i = -k}^{k-1} T^{-i} (\xi)$ for $k \in \mathbb{N}$. 
In the case when $X \subseteq \mathcal{A}^{\mathbb{Z}}$ is a subshift and $\nu$ is an $S$-invariant probability measure supported on $X$, it is known that, for $m \in \mathbb{N}$,
	\begin{align*}
	h(S^{m}, \nu) = \lim_{n \to \infty} \frac{1}{n} \sum_{u \in \mathcal{L}^{mn}(X)} \hspace{-1em}-\nu([u]) \log(\nu([u])) = m h(S, \nu), 
	\end{align*}
where $\mathcal{L}^{k}(X)$ denotes the set of all words of length $k$ in the language $\mathcal{L}(X)$ of $X$, for $k$ a natural number. 
Since a primitive random substitution $\vartheta_{\mathbf{P}} = (\vartheta, \mathbf{P})$ satisfies $\mathcal{L} (X_{\vartheta}) = \mathcal{L}_{\vartheta}$, the entropy of a frequency measure $\mu_{\mathbf{P}}$ supported on $X_{\theta}$ is given by
	\begin{align*}
	h(S, \mu_{\mathbf{P}}) = \lim_{n \to \infty} \frac{1}{n} \sum_{u \in \mathcal{L}_{\vartheta}^{n}} \!- \mu_{\mathbf{P}} ([u]) \log(\mu_{\mathbf{P}} ([u])).
	\end{align*}
In what follows we will predominantly be concerned with computing the invariant $h(S, \mu_{\mathbf{p}})$ and so when it is clear from the context, we write $h(\mu_{\mathbf{p}})$ for $h(S, \mu_{\mathbf{p}})$; this will be the case in all of what follows, except in the proof of \Cref{T intrinsic ergodicity}.

Two additional concepts, which we will utilise in the proof of \Cref{T intrinsic ergodicity} is the entropy and conditional entropy of a partition.  In order to define these quantities, let $\eta$ be a second measurable partition of $X$.  The \textsl{entropy} $H_{\nu}(\eta)$ of $\eta$ with respect to $\nu$ is defined to be the quantity
	\begin{align*}
	H_{\nu}(\eta) = \sum_{A \in \eta} -\nu(A) \log(\nu(A)),
	\end{align*}
and we note that, by Fekete's Lemma,
	\begin{align}\label{eq:refence_page_20_i}
	h(T, \nu) = \inf_{n \in \mathbb{N}} \frac{1}{2n} H_{\nu}(\xi_{n}).
	\end{align}
The \textsl{entropy of $\xi$ given $\eta$} with respect to $\nu$ is defined by
	\begin{align*}
	H_{\nu}(\xi \, \vert \, \eta) 
	= - \sum_{A \in \eta} \nu(A) H_{\nu^{}_A}(\xi),
	\end{align*}
where $\nu^{}_A \colon B \mapsto \nu(A \cap B)/\nu(A)$ denotes the normalized restriction of $\nu$ to the set $A$.

We will mostly be concerned with partitions that are generated by some random map $\mc U$, that is, a measurable function on a probability space $(\Omega,\mc F,\nu)$. More precisely, if $\mc U$ has a finite image $\Ima(\mc U)$ (i.e. if it takes only finitely many values), it generates the partition
\[
\xi(\mc U) = \{ \mc U^{-1}(u) : u \in\Ima(\mc U) \}.
\]
To avoid heavy notation, we set 
\[
H_{\nu}(\mc U) : = H_{\nu}(\xi(\mc U)),
\]
in such situations. If we are dealing with two such random maps $\mc U$ and $V$, we set
\[
H_{\nu}(\mc U, \mc V) : = H_{\nu}(\xi((\mc U, \mc V)))
\]
where
\[
\xi((\mc U, \mc V)) = \xi(\mc U) \vee \xi(\mc V) := \{ A \cap B : A \in \xi(\mc U), B \in \xi(\mc V) \},
\]
is a common refinement of the partitions generated by $\mc U$ and $\mc V$. Conditional entropies are defined accordingly. Namely, $H_{\nu} (\mc U \, \vert \, \mc V) = H_{\nu} (\xi (\mc U) \, \vert \, \xi (\mc V))$, $H_{\nu} (\mc U, \mc V \, \vert \, \mc W) = H_{\nu} (\xi (\mc U) \vee \xi (\mc V) \, \vert \, \xi (\mc W))$ and $H_{\nu} (\mc U \, \vert \, \mc V, \mc W) = H_{\nu} (\xi (\mc U) \, \vert \, \xi (\mc V) \vee \xi (\mc W))$, where $\mc U, \mc V$ and $\mc W$ are random maps on $(\Omega, \mathcal{F}, \nu)$.

In the proof of our main results, we will freely use several properties of (conditional) entropy. For the reader's convenience we list the most important ones in the following; compare \cite[Ch.~4]{MR648108}.

\begin{lemma}
Let $\mc U, \mc V$ and $\mc W$ be (measurable) random maps with finite image as above. Then,
\begin{enumerate}
\item $H_{\nu}(\mc U) \leq \log(\# \Ima(\mc U))$, with equality precisely if $\nu \circ \mc U^{-1}$ is equi-distributed.
\item $H_{\nu}(\mc U) \leq H_{\nu}(\mc U, \mc V)$, with equality precisely if $\mc U$ determines $\mc V$ (up to nullsets).
\item $H_{\nu} (\mc U, \mc V ) = H_{\nu}(\mc V) + H_{\nu}(\mc U \, \vert \, \mc V)$.
\item $H_{\nu}(\mc U \, \vert \, \mc V) \leq H_{\nu}(\mc U)$, with equality if and only if $\mc U$ and $\mc V$ are independent.
\item $H_{\nu} (\mc U \, \vert \, \mc V, \mc W) \leq H_{\nu} (\mc U \, \vert \, \mc V)$.
\item $H_{\nu} (\mc U, \mc V \, \vert \, \mc W) = H_{\nu} (\mc U \, \vert \, \mc W) + H_{\nu} (\mc V \, \vert \, \mc U, \mc W)$.
\end{enumerate}
\end{lemma}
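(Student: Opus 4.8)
The plan is to reduce all six assertions to the classical theory of entropy of finite measurable partitions. Identifying each random map $\mc U$ with the finite partition $\xi(\mc U)$ it generates, every statement becomes a standard fact about partition entropy, for which we may invoke \cite[Ch.~4]{MR648108}; below I indicate the arguments, paying particular attention to the equality cases. Throughout I would write $\phi(t) = -t\log t$, which is strictly concave on $[0,1]$ under the convention $\phi(0) = 0$, and I would use the conditional entropy in the form $H_\nu(\mc U \mid \mc V) = \sum_{B} \nu(B)\, H_{\nu_B}(\mc U)$, where $B$ ranges over the atoms of $\xi(\mc V)$.

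First I would establish the chain rule (3) directly from the definitions: writing the atoms of $\xi(\mc U)$ and $\xi(\mc V)$ as $A$ and $B$, a term-by-term rearrangement of $\sum_{A,B}\phi(\nu(A\cap B))$ against $\sum_{B}\phi(\nu(B))$ yields $H_\nu(\mc U,\mc V) = H_\nu(\mc V) + H_\nu(\mc U \mid \mc V)$. Since each summand $H_{\nu_B}(\mc U)$ is nonnegative, the conditional entropy is nonnegative, and (2) follows at once: $H_\nu(\mc U,\mc V) = H_\nu(\mc U) + H_\nu(\mc V \mid \mc U) \geq H_\nu(\mc U)$, with equality precisely when $H_\nu(\mc V \mid \mc U) = 0$, that is, when $\mc U$ determines $\mc V$ up to nullsets. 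The conditional chain rule (6) is then obtained by relativizing (3): applying (3) with $\nu$ replaced by the normalized restriction $\nu_C$ on each atom $C$ of $\xi(\mc W)$, multiplying by $\nu(C)$, and summing over $C$, using the identity $(\nu_C)_A = \nu_{C\cap A}$ to recognize the resulting terms as $H_\nu(\mc U,\mc V \mid \mc W)$, $H_\nu(\mc U \mid \mc W)$ and $H_\nu(\mc V \mid \mc U,\mc W)$.

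The two inequalities (1) and (4) both come from concavity of $\phi$ (respectively of $\log$). For (1), with $p_u = \nu(\mc U^{-1}(u))$ and $n = \#\,\Ima(\mc U)$, Jensen's inequality applied to $\log$ gives $H_\nu(\mc U) = \sum_u p_u \log(1/p_u) \leq \log n$, with equality by strict concavity iff all $p_u$ coincide, i.e. iff $\nu\circ\mc U^{-1}$ is equidistributed. For (4), for each atom $A$ of $\xi(\mc U)$ I would write $\nu(A) = \sum_B \nu(B)\,\nu_B(A)$ as a convex combination and apply concavity of $\phi$ to obtain $\phi(\nu(A)) \geq \sum_B \nu(B)\,\phi(\nu_B(A))$; summing over $A$ gives $H_\nu(\mc U) \geq H_\nu(\mc U \mid \mc V)$, with equality, by strict concavity, exactly when $\nu_B(A) = \nu(A)$ for all relevant $A$ and $B$, i.e. when $\mc U$ and $\mc V$ are independent. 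Finally, (5) follows by relativizing (4) just as (6) was deduced from (3): applying (4) on each atom of $\xi(\mc V)$ and averaging yields $H_\nu(\mc U \mid \mc V,\mc W) \leq H_\nu(\mc U \mid \mc V)$.

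There is no substantial obstacle, since these are classical facts; the only point requiring genuine care is the bookkeeping of the equality cases in (1), (2) and (4). In each case the condition reduces to strict concavity of $\phi$ (or $\log$) forcing a degenerate convex combination, corresponding respectively to equidistribution, determination up to nullsets, and independence.
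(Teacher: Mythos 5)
Your proof is correct; the paper itself gives no proof of this lemma, deferring to the standard reference (\cite[Ch.~4]{MR648108}), and your arguments --- the chain rule by term-by-term rearrangement, Jensen/strict concavity of $\varphi$ for (1) and (4) together with their equality cases, and relativization to the atoms of $\xi(\mc V)$ resp.\ $\xi(\mc W)$ for (5) and (6) --- are precisely the standard ones that reference supplies. The only point worth flagging is that you have (correctly) used the convention $H_{\nu}(\mc U \, \vert \, \mc V) = \sum_{B} \nu(B) H_{\nu_B}(\mc U)$, whereas the paper's displayed definition of conditional entropy carries a spurious minus sign that would make it nonpositive.
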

We refer the reader to \cite{MR1963683,MR648108} for further details concerning the entropy of a measure preserving transformation and that of a partition.

\subsection{Main results}\label{subsec:inflation_word_entropy}

The aim of this section is to relate the entropy of the frequency measure $\mu_{\mathbf{P}}$ to a sequence of entropy vectors which are related to inflation words $\vartheta^n(a)$ with $n \in \N$ and $a \in \mc A$. This will establish a natural analogue to the results on topological entropy presented in Section~\ref{SS top entropy}. However, we emphasise that our present setting is more general as we do not require the random substitution to be compatible. We make the standing assumption that $\vartheta_{\mathbf{P}}$ is a \emph{primitive} random substitution throughout.

\begin{definition}
For a primitive random substitution $\vartheta_{\mathbf{P}}$ on $\mathcal A$ and $m \in \N$, we let $\mathbf{H}_m = (H_{m,a})_{a \in \mathcal A}$ denote the vector with entries $H_{m,a} = H_{\mathbb{P}}(\vartheta_{\mathbf{P}}^m(a))$ for all $a \in \mathcal A$.
\end{definition}

As a further notational tool, we write $H(p)$ for the entropy of the vector $(p,1-p)$, that is,
\[
H(p) = - p \log(p) - (1-p) \log (1-p).
\]
Our most general result on the relation between the entropy of $\mu_{\mathbf{P}}$ and the sequence of entropies assigned to the Markov processes $(\vartheta_{\mathbf{P}}^n(a))_{n \in \N}$, with $a \in \mc A$, takes the following form.

\begin{theorem}
\label{THM:main}
Let $\vartheta_{\mathbf{P}}$ be a primitive random substitution with  Perron--Frobenius eigenvalue $\lambda$ and right eigenvector $\mathbf{R}$. Let $\mu_{\mathbf{P}}$ be its frequency measure on $(X_{\vartheta},S)$. Then, for all $k \in \N$,
\[
\frac{1}{\lambda^k} \mathbf{H}_k^{\top} \mathbf{R} - H(\lambda^{-k}) 
\leq h(\mu_{\mathbf{P}})
\leq \frac{1}{\lambda^k - 1} \mathbf{H}_k^{\top} \mathbf{R}.
\]
In particular,
\[
h(\mu_{\mathbf{P}}) = \lim_{k \to \infty} \frac{1}{\lambda^k} \mathbf{H}_k^{\top} \mathbf{R}.
\]
\end{theorem}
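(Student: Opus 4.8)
The plan is to prove, for each fixed $k \in \N$, the displayed two-sided bound, and then deduce the limit formula by squeezing. Indeed, rearranging the two inequalities gives $\tfrac{\lambda^k-1}{\lambda^k}\,h(\mu_{\mathbf{P}}) \le \tfrac{1}{\lambda^k}\mathbf{H}_k^{\top}\mathbf{R} \le h(\mu_{\mathbf{P}}) + H(\lambda^{-k})$, and since $\lambda>1$ forces $\tfrac{\lambda^k-1}{\lambda^k}\to 1$ and $H(\lambda^{-k})\to 0$ as $k\to\infty$, the quantity $\tfrac{1}{\lambda^k}\mathbf{H}_k^{\top}\mathbf{R}$ is trapped between two sequences converging to $h(\mu_{\mathbf{P}})$. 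Moreover, the bound for general $k$ follows from the case $k=1$ applied to $\vartheta_{\mathbf{P}}^k$: by primitivity $X_{\vartheta^k}=X_{\vartheta}$ carries the same frequency measure $\mu_{\mathbf{P}}$, hence the same entropy; the matrix $M^k$ has Perron--Frobenius data $(\lambda^k,\mathbf{R})$; and $\mathbf{H}_1(\vartheta^k_{\mathbf{P}})=\mathbf{H}_k(\vartheta_{\mathbf{P}})$ by definition. So it suffices to establish $\tfrac{1}{\lambda}\mathbf{H}_1^{\top}\mathbf{R} - H(\lambda^{-1}) \le h(\mu_{\mathbf{P}}) \le \tfrac{1}{\lambda-1}\mathbf{H}_1^{\top}\mathbf{R}$.

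The main structural input I would use is the self-similar description of $\mu_{\mathbf{P}}$: a $\mu_{\mathbf{P}}$-random sequence arises by applying $\vartheta_{\mathbf{P}}$ to a $\mu_{\mathbf{P}}$-distributed parent sequence and then shifting by an offset drawn from the appropriate length-biased law. Writing $\mathbb{P}$ for the law on this generating space, I fix a window length $N$ and introduce the finite-image random maps: the observed word $\Pi = x_{[0,N)}$ (so that $\lim_N \tfrac1N H_{\mathbb{P}}(\Pi)=h(\mu_{\mathbf{P}})$); the block $Y$ of parent letters whose level-$1$ inflation words meet $[0,N)$; the offset $O$ of position $0$ within its covering inflation word; the vector $C$ of realisation choices for the letters of $Y$; and the \emph{cut structure} $\mathcal{C}$ recording where consecutive inflation words abut inside $[0,N)$. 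I would use throughout that $\Pi$ is a deterministic function of $(Y,O,C)$, and that the frequency-weighted expected inflation length is $\mathbf{1}^{\top}M\mathbf{R}=\lambda$, so by the ergodic theorem the number of inflation words meeting the window concentrates around $N/\lambda$ and all boundary contributions are $o(N)$.

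For the upper bound I would use subadditivity and the fact that conditioning does not increase entropy to write $H_{\mathbb{P}}(\Pi) \le H_{\mathbb{P}}(Y) + H_{\mathbb{P}}(O) + H_{\mathbb{P}}(C\mid Y)$. Here $H_{\mathbb{P}}(O)=O(1)$; the parent block $Y$ is a window of $\mu_{\mathbf{P}}$ of length $\approx N/\lambda$, whence $H_{\mathbb{P}}(Y)\le(h(\mu_{\mathbf{P}})+o(1))\,N/\lambda$; and, since the choices are conditionally independent given $Y$ with per-letter entropy $H_{1,a}$, the frequency of letters gives $H_{\mathbb{P}}(C\mid Y)=\mathbb{E}\big[\sum_j H_{1,Y_j}\big]=(1+o(1))\tfrac{N}{\lambda}\mathbf{H}_1^{\top}\mathbf{R}$. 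Dividing by $N$ and letting $N\to\infty$ yields the self-referential inequality $h(\mu_{\mathbf{P}})\le\tfrac{1}{\lambda}\big(h(\mu_{\mathbf{P}})+\mathbf{H}_1^{\top}\mathbf{R}\big)$, which rearranges to the claimed upper bound.

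For the lower bound I would condition on the full parse $(Y,O,\mathcal{C})$. Given this data the window splits into inflation words, each word lying entirely inside $[0,N)$ is read off verbatim from $\Pi$, and since distinct realisations of a letter are distinct words, this recovers the corresponding choices $C^{\mathrm{in}}$. Thus $C^{\mathrm{in}}$ is a function of $(\Pi,Y,O,\mathcal{C})$, so $H_{\mathbb{P}}(\Pi)\ge H_{\mathbb{P}}(\Pi\mid Y,O,\mathcal{C})\ge H_{\mathbb{P}}(C^{\mathrm{in}}\mid Y,O,\mathcal{C})$, and the chain rule gives $H_{\mathbb{P}}(C^{\mathrm{in}}\mid Y,O,\mathcal{C})\ge H_{\mathbb{P}}(C^{\mathrm{in}}\mid Y,O)-H_{\mathbb{P}}(\mathcal{C}\mid Y,O)$, with the first term equal to $(1+o(1))\tfrac{N}{\lambda}\mathbf{H}_1^{\top}\mathbf{R}$ as before. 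The decisive estimate is $H_{\mathbb{P}}(\mathcal{C}\mid Y,O)\le H_{\mathbb{P}}(\mathcal{C})\le N\,H(\lambda^{-1})+o(N)$: the cut structure is a subset of $\{0,\dots,N\}$ of density $\approx\lambda^{-1}$, so Stirling's approximation bounds its entropy by the binary entropy $H(\lambda^{-1})$ per site. Dividing by $N$ and passing to the limit gives $h(\mu_{\mathbf{P}})\ge\tfrac{1}{\lambda}\mathbf{H}_1^{\top}\mathbf{R}-H(\lambda^{-1})$. The hardest step, and the source of the correction term $H(\lambda^{-k})$, is precisely this control of the cut structure: random substitutions need not be recognisable, and for non-compatible substitutions the realisation lengths, hence the cut positions $\mathcal{C}$, are themselves random and are \emph{not} determined by $(Y,O)$, so one must pay the entropy cost of locating the fully contained inflation words. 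A secondary technical burden is making the self-similar representation of $\mu_{\mathbf{P}}$ precise and justifying the concentration of the number of inflation words around $N/\lambda$ together with the attendant $o(N)$ error terms.
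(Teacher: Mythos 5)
Your proposal is correct and follows essentially the same route as the paper: the reduction to $k=1$ via powers, the squeeze argument from the rearranged bounds, the renormalisation representation of $\mu_{\mathbf{P}}$ as parent word plus independent inflation choices plus offset (the paper's $(\Omega_n,P_n,\mathcal W_n)$), the subadditive upper bound yielding $h \leq \lambda^{-1}(h + \mathbf{H}_1^{\top}\mathbf{R})$, and the lower bound in which the choices are recovered from the window once the cut positions are known, with the cut structure costing $H(\lambda^{-1})$ per site by a binomial estimate. The only differences are bookkeeping ones that the paper resolves by fixing a deterministic parent-block length $m \approx n/\lambda^{\mp\varepsilon}$ and handling the deviation event via its large-deviation lemma, and by correcting for the length-biased law of the parent block; your sketch flags both issues, so nothing essential is missing.
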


In particularly convenient situations it is possible to omit the counterterm $H(\lambda^{-k})$.
This is the case if $\vartheta_{\mathbf{P}}$ has unique realisation paths, which allows us to gain more control over the bounds for the measure theoretic entropy.
Moreover, in the case when the random substitution satisfies the disjoint set condition we obtain a closed form formula.
We also obtain a closed form formula when the random substitution satisfies the identical set condition, provided the production probabilities satisfy the following condition.

\begin{definition}
Let $\vartheta_{\mathbf{P}} = (\vartheta, \mathbf{P})$ be a random substitution satisfying the identical set condition. We say that $\vartheta_{\mathbf{P}}$ has \textsl{identical production probabilities} if for all $a \in \mathcal{A}$, $k \in \mathbb{N}$ and $v \in \vartheta^{k} (a)$, we have
	\begin{align*}
	\mathbb{P} [\vartheta_{\mathbf{P}}^{k-1} (u_1) = v] = \mathbb{P} [\vartheta_{\mathbf{P}}^{k-1} (u_2) = v]
	\end{align*}
for all $u_1$ and $u_2 \in \vartheta (a)$.
\end{definition}

\begin{theorem}
\label{THM:main-urp}
Assume that $\vartheta_{\mathbf{P}}$ is a primitive random substitution with unique realisation paths, with Perron--Frobenius eigenvalue $\lambda$ and right eigenvector $\mathbf{R}$. Then, for all $k \in \N$,
\[
\frac{1}{\lambda^k} \mathbf{H}_k^{\top} \mathbf{R} \leq h(\mu_{\mathbf{P}}) \leq \frac{1}{\lambda^k - 1} \mathbf{H}_k^{\top} \mathbf{R},
\]
where the upper bound is an equality if and only if $\vartheta_{\mathbf{P}}^k$ satisfies the disjoint set condition. The lower bound is an equality if and only if $\vartheta_{\mathbf{P}}^k$ satisfies the identical set condition with identical production probabilities. Further, the sequence of lower bounds $(\lambda^{-n} \mathbf{H}^{\top}_n \mathbf{R})_{n \in \N}$ is non-decreasing in $n$.
\end{theorem}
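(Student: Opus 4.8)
The plan is to take the upper bound as already supplied by \Cref{THM:main} (its right-hand inequality is literally $(\lambda^k-1)^{-1}\mathbf H_k^{\top}\mathbf R$ and requires no extra hypothesis), and to derive every remaining assertion from a single one-step entropy recursion for the vectors $\mathbf H_n$ that becomes available under unique realisation paths. Throughout I write $G_n = \mathbf H_n^{\top}\mathbf R$, so the claimed bounds read $\lambda^{-n} G_n \leq h(\mu_{\mathbf P}) \leq (\lambda^n-1)^{-1} G_n$.

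First I would establish the recursion. Fix $a\in\mathcal A$ and $n\geq 2$, set $U = \vartheta_{\mathbf P}(a)$ and $Z = \vartheta^{n}_{\mathbf P}(a) = \vartheta^{n-1}_{\mathbf P}(U)$, and apply the chain rule for conditional entropy in the form
\[
H_{n,a} = H_{\mathbb P}(Z) = H_{\mathbb P}(U) + H_{\mathbb P}(Z\mid U) - H_{\mathbb P}(U\mid Z).
\]
Here $H_{\mathbb P}(U) = H_{1,a}$. Conditioning on a realisation $U = u = u_1\cdots u_r$, the word $Z$ is the concatenation of the \emph{independent} images $\vartheta^{n-1}_{\mathbf P}(u_i)$; since $\vartheta_{\mathbf P}$ has unique realisation paths this concatenation is injective on realisations, so by sub-additivity together with its equality criterion the conditional entropy is additive, giving $H_{\mathbb P}(Z\mid U)=\sum_b M_{b,a}H_{n-1,b}=(M^{\top}\mathbf H_{n-1})_a$ after averaging and using $\mathbb E[\lvert U\rvert_b] = M_{b,a}$. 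Dropping the nonnegative term $H_{\mathbb P}(U\mid Z)$ yields $H_{n,a}\geq (M^{\top}\mathbf H_{n-1})_a$; pairing with $\mathbf R$ and using $M\mathbf R = \lambda\mathbf R$ gives $G_n\geq\lambda G_{n-1}$, so $(\lambda^{-n}G_n)_n$ is non-decreasing, which is the final assertion of the theorem. The lower bound is then immediate: by \Cref{THM:main}, $h(\mu_{\mathbf P}) = \lim_k \lambda^{-k}G_k = \sup_k \lambda^{-k}G_k \geq \lambda^{-n}G_n$, with the counterterm now absent.

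For the two equality statements I would reduce to level one by passing to $\Theta = \vartheta^{n}$, which is again primitive, has unique realisation paths, has frequency measure $\mu_{\mathbf P}$ and has Perron--Frobenius eigenvalue $\Lambda := \lambda^n$; the level-$n$ bounds for $\vartheta$ are exactly the level-one bounds for $\Theta$, and the recursion above, now run for the iterates $\Theta^m = \vartheta^{nm}$, carries the nonnegative \emph{defect} $H_{\mathbb P}(\vartheta_{\mathbf P}(a)\mid\vartheta^{n}_{\mathbf P}(a))$. For the upper bound, equality in $h(\mu_{\mathbf P}) = (\Lambda-1)^{-1}G_n$ together with the squeeze $\Lambda^{-m}G^{\Theta}_m \nearrow h(\mu_{\mathbf P}) \leq (\Lambda^{m}-1)^{-1}G^{\Theta}_m$ forces every defect to vanish, which is precisely recognisability of each level-one image from its iterate, i.e. the disjoint set condition for $\Theta=\vartheta^n$; conversely, a vanishing defect makes the recursion exact and collapses the two bounds. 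For the lower bound, equality combined with monotonicity forces $\Lambda^{-m}G^{\Theta}_m$ to be constant, hence $G^{\Theta}_m = \Lambda\, G^{\Theta}_{m-1}$ for all $m$; comparing with $H_{\mathbb P}(Z^{(a)})\geq H_{\mathbb P}(Z^{(a)}\mid U^{(a)})$ and using strict positivity of $\mathbf R$ forces the mutual information $I(Z^{(a)};U^{(a)})$ to vanish for every $a$ and every level, i.e. the law of $\vartheta^{m-1}_{\mathbf P}(u)$ is independent of the realisation $u\in\Theta(a)$; unpacking this independence into equal supports and equal transition probabilities returns exactly the identical set condition with identical production probabilities, and the converse direction reverses these implications.

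The main obstacle will be the converse of the lower-bound characterisation: turning the purely information-theoretic identity $I(Z^{(a)};U^{(a)}) = 0$ back into the combinatorial identical set condition together with the probabilistic identical production probabilities, and verifying that the $\mathbf R$-weighted global equality forces the per-letter equalities (this is exactly where $R_a>0$ is used). A secondary technical point is the clean justification of the additivity $H_{\mathbb P}(Z\mid U) = (M^{\top}\mathbf H_{n-1})_a$ from unique realisation paths, and the careful bookkeeping relating the levels of $\vartheta$ to those of $\Theta=\vartheta^n$ in the defect recursion.
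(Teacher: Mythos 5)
Your proposal is correct and its skeleton coincides with the paper's: the upper bound is taken from \Cref{PROP:upper-bound} (as in the paper), and the recursion $\mathbf H_n^{\top}\geq \mathbf H_{n-1}^{\top}M$ under unique realisation paths, together with its equality analysis, is exactly the paper's \Cref{PROP:lower-iteration}/\Cref{PROP:upper-iteration} and \Cref{COR:lower-bound-recursion}/\Cref{COR:upper-bound-recursion} (including the reduction to $\Theta=\vartheta^k$ and the translation of ``determined by'' / ``independent of'' into (DSC) / (ISC) with identical production probabilities via \Cref{REM:DSC-ISC-reformulation}). Where you genuinely deviate is the lower bound itself: the paper reproves it directly by the renormalisation argument of \Cref{PROP:lower-bound}, showing that under unique realisation paths $H_{P_n}(\mathcal U_{[2,m]}\mid\mathcal W_n)=o(n)$ so that the counterterm $H(\lambda^{-k})$ disappears, whereas you obtain it for free from monotonicity of $(\lambda^{-n}\mathbf H_n^{\top}\mathbf R)_n$ plus the convergence statement of \Cref{THM:main} (a non-decreasing sequence is bounded by its limit $h(\mu_{\mathbf P})$). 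This is legitimate and non-circular, since the limit in \Cref{THM:main} is established from the counterterm version of the bounds without any unique-realisation-path hypothesis, and it is arguably more economical than the paper's route. One small imprecision to fix: in your chain-rule identity $H(Z)=H(U)+H(Z\mid U)-H(U\mid Z)$, the quantity you discard to get $H_{n,a}\geq (M^{\top}\mathbf H_{n-1})_a$ is not $H(U\mid Z)$ but the mutual information $H(U)-H(U\mid Z)\geq 0$; equivalently, just invoke $H(Z)\geq H(Z\mid U)$ directly, with equality precisely when $Z$ and $U$ are independent, which is also the cleanest entry point for the lower-bound equality characterisation you sketch.
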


\begin{remark}
\label{REM:DSC-ISC-reformulation}
The conditions that allow us to obtain closed expressions for the entropy in Theorem~\ref{THM:main-urp} have been formulated in a manner that parallels our discussion of topological entropy. They can also be rephrased in probabilistic terms. More precisely, $\vartheta_{\mathbf{P}}$ satisfies the disjoint set conditions if and only if $\vartheta_{\mathbf{P}}(a)$ is determined by $\vartheta_{\mathbf{P}}^n(a)$ for all $n\in \N$ and $a \in \mc A$. The identical set condition with identical production probabilities holds for $\vartheta_{\mathbf{P}}$ if and only if the random words $\vartheta_{\mathbf{P}}(a)$ and $\vartheta_{\mathbf{P}}^n(a)$ are independent for all $n \geq 2$ and $a \in \mc A$.
\end{remark}

Comparing \Cref{THM:main} and \Cref{THM:main-urp} one of the most striking differences is that the term $H(\lambda^{-k})$ does not appear in the lower bound under the assumption of unique realisation paths. It is natural to inquire whether this term can also be dropped in the more general case of primitive random substitutions. 
That this is not the case can be seen from the following example.

\begin{example}
Let $p \in (0,1)$ and let $\vartheta_{\mathbf{P}}$ be the random substitution defined by
\[
\vartheta_{\mathbf{P}} \colon \begin{cases}
 a \mapsto & \begin{cases}
 a & \mbox{ with probability } p,
\\ aba & \mbox{ with probability } 1-p,
 \end{cases}
 \\b \mapsto & bab.
 \end{cases}
\]
This random substitution gives rise to the periodic subshift $X_{\vartheta} = \{ (ab)^{\Z}, (ba)^{\Z} \}$, which has entropy $0$. On the other hand, $M$ is primitive and $H_{\mathbb{P}}(\vartheta_{\mathbf{P}}(a)) > 0$. 
\end{example}

In general, the measure theoretic entropy $h(\mu_{\mathbf{P}})$ depends on the choice of $\mathbf{P}$. As a consequence of \Cref{THM:main} we obtain that the dependence on the probability parameters is continuous. 

In the following, we regard $\mathbf{P}$ as a vector in $\mathbb{R}^{r}$ equipped with the Euclidean topology, where $r = \sum_{i = 1}^{d} r_i = \sum_{i = 1}^{d} \# \vartheta(a_i)$ and $d$ is the cardinality of the alphabet.
We emphasize that we assume that $\mathbf{P}$ is non-degenerate in the sense that all probabilities are assumed to be strictly positive.

\begin{corollary}
Assume the setting of \Cref{THM:main}. The map $\mathbf{P} \mapsto h (\mu_{\mathbf{P}})$ is continuous.
\end{corollary}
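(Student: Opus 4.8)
The plan is to prove continuity by exploiting the uniform convergence furnished by the two-sided bounds in \Cref{THM:main}. The key observation is that the map $\mathbf{P} \mapsto h(\mu_{\mathbf{P}})$ is a pointwise limit of the functions $\mathbf{P} \mapsto \lambda^{-k} \mathbf{H}_k^{\top} \mathbf{R}$, and if I can show (i) that each term in this sequence depends continuously on $\mathbf{P}$, and (ii) that the convergence is \emph{uniform} on compact subsets of the (open) parameter space, then continuity of the limit follows by the standard theorem that a uniform limit of continuous functions is continuous.

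\emph{First I would} establish continuity of each approximant $\mathbf{P} \mapsto \lambda^{-k} \mathbf{H}_k^{\top} \mathbf{R}$. Here $\lambda$ and $\mathbf{R}$ are Perron--Frobenius data of the substitution matrix $M = M_{\vartheta_{\mathbf{P}}}$, whose entries $M_{i,j} = \sum_{\ell} p_{j,\ell} \lvert s^{(j,\ell)} \rvert_{a_i}$ are \emph{polynomial} (in fact affine) in $\mathbf{P}$. By primitivity, $\lambda$ is a simple eigenvalue, so by standard perturbation theory for simple eigenvalues of matrices (and the accompanying continuity of the normalized eigenvector $\mathbf{R}$), both $\lambda$ and $\mathbf{R}$ depend continuously—indeed real-analytically—on the entries of $M$, hence on $\mathbf{P}$. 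For the entropy vector, each entry $H_{k,a} = H_{\mathbb{P}}(\vartheta_{\mathbf{P}}^k(a))$ is a finite sum of terms $-\mathbb{P}[\vartheta_{\mathbf{P}}^k(a) = v] \log \mathbb{P}[\vartheta_{\mathbf{P}}^k(a) = v]$ over the finite set $\vartheta^k(a)$, which does not depend on $\mathbf{P}$. Each probability $\mathbb{P}[\vartheta_{\mathbf{P}}^k(a) = v] = (Q^k)_{a,v}$ is a polynomial in the entries of $\mathbf{P}$, and $t \mapsto -t\log t$ is continuous on $[0,1]$ (with value $0$ at $t = 0$); since $\mathbf{P}$ is non-degenerate these probabilities stay strictly positive, so each $H_{k,a}$ is continuous in $\mathbf{P}$. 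Combining these, each approximant is a continuous function of $\mathbf{P}$.

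\emph{The main obstacle} is step (ii), establishing uniform convergence. For this I would revisit the bounds in \Cref{THM:main}: for every $k$,
\[
0 \leq \frac{1}{\lambda^k - 1} \mathbf{H}_k^{\top} \mathbf{R} - \frac{1}{\lambda^k} \mathbf{H}_k^{\top} \mathbf{R} + H(\lambda^{-k}) = \frac{\mathbf{H}_k^{\top} \mathbf{R}}{\lambda^k(\lambda^k-1)} + H(\lambda^{-k}),
\]
so the gap between the upper and lower bounds, and hence between $h(\mu_{\mathbf{P}})$ and the approximant $\lambda^{-k}\mathbf{H}_k^{\top}\mathbf{R}$, is controlled by this quantity. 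The entries $H_{k,a} \leq \log(\#\vartheta^k(a)) \leq \log(d \cdot |\vartheta|^k) = O(k)$ grow at most linearly in $k$, while $\lambda^k(\lambda^k-1)$ grows like $\lambda^{2k}$, and $H(\lambda^{-k}) \to 0$ geometrically. On a compact subset $K$ of the non-degenerate parameter space, primitivity gives a uniform lower bound $\lambda \geq 1 + \delta > 1$ (since $\lambda$ is continuous and everywhere exceeds $1$), so the error bound is dominated by an expression of the form $C k (1+\delta)^{-2k} + H((1+\delta)^{-k})$, which tends to $0$ independently of $\mathbf{P} \in K$. This yields uniform convergence on $K$, and since continuity is a local property and every point of the open parameter space has a compact neighbourhood, the proof concludes. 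The one point requiring care is that $|\vartheta|$ and $d$ are fixed by $\vartheta$ (independent of $\mathbf{P}$), so all the crude bounds used are genuinely uniform over $K$.
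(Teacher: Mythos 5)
Your overall strategy is exactly the paper's: exhibit $h(\mu_{\mathbf{P}})$ as a uniform limit of the continuous functions $\mathbf{P}\mapsto \lambda^{-k}\mathbf{H}_k^{\top}\mathbf{R}$ on compact pieces of the parameter domain, with continuity of each approximant coming from perturbation theory for the simple eigenvalue $\lambda$ and from the fact that the finitely many probabilities $\mathbb{P}[\vartheta_{\mathbf{P}}^k(a)=v]$ are polynomial in $\mathbf{P}$. That part is fine. However, your uniform-convergence step contains a genuinely false estimate: you claim $H_{k,a}\leq \log(\#\vartheta^k(a))\leq \log(d\cdot|\vartheta|^k)=O(k)$. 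The set $\vartheta^k(a)$ consists of words of length up to $|\vartheta|^k$, and its cardinality is in general doubly exponential in $k$ (for the random period doubling substitution, $\log\#\vartheta^k(a)$ grows like a constant times $2^k$). The correct crude bound is $\log(\#\vartheta^k(a))=O(|\vartheta|^k)$, and indeed whenever $h(\mu_{\mathbf{P}})>0$ one has $\mathbf{H}_k^{\top}\mathbf{R}\sim h(\mu_{\mathbf{P}})\,\lambda^k$, so $H_{k,a}$ grows geometrically, not linearly. With the corrected counting bound your error term becomes $O(|\vartheta|^k/\lambda^{2k})$, which need not tend to zero since the only general relation is $\lambda\leq|\vartheta|$; so as written the argument does not establish uniform convergence.

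The gap is repairable, and the repair is precisely what the paper does: instead of bounding $\mathbf{H}_k^{\top}\mathbf{R}$ by counting, use the theorem's own inequality $\lambda^{-k}\mathbf{H}_k^{\top}\mathbf{R}\leq h(\mu_{\mathbf{P}})+H(\lambda^{-k})\leq h_{\mathrm{top}}(X_{\vartheta})+\log 2$, which is a bound independent of $\mathbf{P}$ and $k$ because $X_{\vartheta}$ does not depend on $\mathbf{P}$. Then
\[
\Bigl|\,h(\mu_{\mathbf{P}})-\tfrac{1}{\lambda^{k}}\mathbf{H}_k^{\top}\mathbf{R}\,\Bigr|
\;\leq\; \max\Bigl\{ \tfrac{1}{\lambda^{k}-1}\bigl(h_{\mathrm{top}}(X_{\vartheta})+\log 2\bigr),\; H(\lambda^{-k}) \Bigr\},
\]
and the uniform lower bound $\lambda\geq 1+\delta$ on your compact set $K$ finishes the argument. (Alternatively, \Cref{COR:upper-bound-recursion} gives $\mathbf{H}_k^{\top}\mathbf{R}\leq\frac{\lambda^k-1}{\lambda-1}\mathbf{H}_1^{\top}\mathbf{R}$ with $\mathbf{H}_1^{\top}\mathbf{R}$ bounded by $\max_a\log r_a$, which also suffices.) Everything else in your write-up matches the paper's proof.
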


\begin{proof}
For $0< \varepsilon < 1$ let $D_{\varepsilon}$ be the domain of those $\mathbf{P}$ such that all entries of $\mathbf{P}$ are greater than $\varepsilon$. Since we get the complete domain of $\mathbf{P}$ as a (nested) union over all $D_{\varepsilon}$, it is enough to show that the map $\mathbf{P} \mapsto h(\mu_{\mathbf{P}})$ is continuous on $D_{\varepsilon}$ for arbitrary $\varepsilon$. 
The general strategy of the proof is to represent $h(\mu_{\mathbf{P}})$ as a uniform limit of continuous functions on $D_{\varepsilon}$ via \Cref{THM:main}.

Recall that all of the data $\lambda, \mathbf{H}_m, \mathbf{R}$ depend implicitly on $\mathbf{P}$. 
By primitivity $\lambda > 1$ is a \emph{simple} eigenvalue for all $\mathbf{P}$. 
Since the substitution matrix depends analytically on the probability parameters, we can resort to fundamental facts in perturbation theory; compare for example \cite{kato}. 
In particular, $\lambda$ depends analytically on $\mathbf{P} \in D_{\varepsilon}$ and since $\lambda$ is simple, so does $\mathbf{R}$. The entries of $\mathbf{H}_m$ inherit continuity from the fact that the maps $\mathbf{P} \mapsto \mathbb{P}[\vartheta^m_{\mathbf{P}}(a) = u]$ are continuous for all $a \in \mc A$ and $u \in \mc A^+$.
Hence, the function
\[
s_m \colon \mathbf{P} \mapsto \frac{1}{\lambda^m} \mathbf{H}_m^{\top} \mathbf{R},
\]
is continuous in $\mathbf{P}$ for all $m \in \N$. With this notation, \Cref{THM:main} can be rephrased as
\begin{equation}
\label{EQ:convergence-bounds}
\frac{\lambda^m - 1}{\lambda^m} h(\mu_{\mathbf{P}}) \leq s_m(\mathbf{P}) \leq h(\mu_{\mathbf{P}}) + H(\lambda^{-m}),
\end{equation}
for all $m \in \N$. Note that $h(\mu_{\mathbf{P}})$ is uniformly bounded from above by the topological entropy of $X_{\vartheta}$ and $\lambda$ is bounded from below by its minimal value $\lambda_{\varepsilon}>1$ on the compact set $D_{\varepsilon}$. Therefore, the convergence
\[
\lim_{m \to \infty} s_m(\mathbf{P}) = h(\mu_{\mathbf{P}})
\]
is uniform on $D_{\varepsilon}$ which implies the assertion.
\end{proof}

\subsection{Renormalisation}
\label{SUBSEC:renormalisation}

Properties adhering to a (deterministic) substitution subshift can often be expressed more directly in terms of the corresponding substitution. A key observation in this regard is that a substitution subshift exhibits a self-similar structure that relates it directly to the substitution action via a renormalisation step. More precisely, every sequence in the subshift can be decomposed into inflation words of type $\vartheta(a)$, with $a \in \mc A$ such that replacing $\vartheta(a)$ by $a$ gives another sequence in the subshift. This corresponds to an (average) change of the scale by a factor $\lambda$. 
In the primitive case, keeping track of letter frequencies during this procedure provides a consistency relation that immediately shows that they must form a right Perron--Frobenius eigenvector of the substitution matrix.

A similar procedure works for word frequencies, if the substitution is replaced by an induced substitution \cite{queffelec}. This can be extended to primitive random substitutions \cite[Prop.~5.8]{gohlke-spindeler}, showing that the probability distribution $\mu^{(n)}$ on $\mathcal L_{\vartheta}^n$, given by
\[
\mu^{(n)}(w) = \mu_{\mathbf{P}}([w]),
\]
is the unique normalised Perron--Frobenius eigenvector of an appropriate induced substitution matrix $M_n$, for all $n \in \N$. 
This gives the following self-consistency relation, which was shown as the first step in the proof of \cite[Prop.~5.8]{gohlke-spindeler}.

\begin{lemma}
\label{LEM:key}
Let $\vartheta_{\mathbf{P}}$ be a primitive random substitution. Then, for all $w \in \mathcal L_{\vartheta}^n$,
\[
\mu_{\mathbf{P}}([w]) = \sum_{v \in \mathcal L_{\vartheta}^n} \mu_{\mathbf{P}}([v]) \frac{1}{\lambda} \sum_{m = 1}^{|\vartheta|} \sum_{j = 1}^{m} \mathbb{P} [\vartheta_{\mathbf{P}}(v)_{[j,j+n-1]} = w \wedge |\vartheta_{\mathbf{P}}(v_1)| = m].
\]
\end{lemma}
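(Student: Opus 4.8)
The plan is to derive the self-consistency relation directly from the defining property of $\mu_{\mathbf{P}}$ as the normalised Perron--Frobenius eigenvector of the induced substitution matrix $M_n$, tracking how a word $w$ of length $n$ can appear inside the image $\vartheta_{\mathbf{P}}(v)$ of a length-$n$ word $v$. The underlying intuition is the renormalisation picture described just before the lemma: every legal sequence decomposes into inflation words $\vartheta(v_j)$, and the frequency of $w$ should equal the probability of producing $w$ as a subword, averaged over the frequency of the pre-image $v$ and normalised by the expansion factor $\lambda$.

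\emph{First} I would set up the counting. Fix $w \in \mathcal L_{\vartheta}^n$. For a randomly substituted realisation of $\vartheta_{\mathbf{P}}(v)$, each occurrence of $w$ starts at some position inside the image. Because $\mu_{\mathbf{P}}([v]) = \text{freq}(v)$ and $\text{freq}$ is defined as a limit of expected counts normalised by expected length (see \Cref{P frequency measure} and the surrounding discussion), the natural identity to establish is that the expected number of occurrences of $w$ per unit length, in the image of a $\mu_{\mathbf{P}}$-typical sequence, equals $\mu_{\mathbf{P}}([w])$ itself. The expected length contributed by a single letter $v_1$ under one application of $\vartheta_{\mathbf{P}}$ is $\lambda$ on average (this is exactly the content of \eqref{eq:freq_PF_right}), which accounts for the factor $1/\lambda$.

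\emph{Second} I would handle the position bookkeeping, which is the delicate part. An occurrence of $w$ in $\vartheta_{\mathbf{P}}(v)$ that is anchored to the first inflation block $\vartheta_{\mathbf{P}}(v_1)$ must begin at some position $j$ within that block; if $|\vartheta_{\mathbf{P}}(v_1)| = m$, then $j$ ranges over $1 \leq j \leq m$. This explains the double sum $\sum_{m=1}^{|\vartheta|}\sum_{j=1}^{m}$, where $|\vartheta|$ is the maximal inflation-word length. The event $\{\vartheta_{\mathbf{P}}(v)_{[j,j+n-1]} = w \wedge |\vartheta_{\mathbf{P}}(v_1)| = m\}$ captures precisely the occurrences of $w$ whose starting position lies in the first block; by shift-invariance of $\mu_{\mathbf{P}}$ and the stationarity of the frequencies, summing over all such anchored occurrences and over all pre-images $v$ recovers the full frequency of $w$ without double counting.

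\emph{The hard part will be} justifying the exchange of limits and expectations that converts the almost-sure frequency definition into the closed finite-sum identity, and in particular verifying that restricting to occurrences anchored in the first inflation block (rather than summing over all blocks) gives the correct normalisation together with the factor $1/\lambda$. This is where the renormalisation structure is genuinely used: the measure-preserving correspondence between $\mu_{\mathbf{P}}$-typical sequences and their de-substitutions means each occurrence of $w$ is counted once per length-$n$ window, and the averaging over the randomness of $\vartheta_{\mathbf{P}}$ together with the frequency weighting of $v$ produces exactly the right-hand side. Since the statement is quoted as the first step in the proof of \cite[Prop.~5.8]{gohlke-spindeler}, I would either cite that computation directly or reproduce it by expanding $\mu^{(n)} = \lambda^{-1} M_n \mu^{(n)}$ entrywise and identifying the entries of the induced matrix $M_n$ with the probabilities appearing in the double sum.
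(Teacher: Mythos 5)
Your proposal is correct and matches the paper's treatment: the paper offers no self-contained proof of this lemma, simply noting that it is the first step in the proof of Proposition~5.8 of the cited work of Gohlke and Spindeler, where $\mu^{(n)}$ is identified as the normalised Perron--Frobenius eigenvector of the induced substitution matrix $M_n$. Your renormalisation heuristic (occurrences of $w$ anchored to the first inflation block, with the factor $1/\lambda$ accounting for the expected length expansion) correctly explains the structure of the identity, and your fallback to citing or expanding the eigenvector equation entrywise is exactly the route the paper takes.
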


It will be convenient to interpret the expression appearing in \Cref{LEM:key} via the distribution of an appropriate random variable that mirrors the action of $\vartheta_{\mathbf{P}}$ on the initial distribution $\mu_{\mathbf{P}}$, together with the choice of the origin in the inflation word decomposition.

\begin{lemma}
\label{LEM:random-word}
For $n \in \N$, $\mu^{(n)}$ is the distribution of a random word $\mathcal W_n$ on a finite probability space $(\Omega_n, P_n)$, defined as follows.
The space
\[
\Omega_n = \{ (v, u_1, \cdots, u_n, j) : v \in \mathcal L_{\vartheta}^n, u_i \in \vartheta(v_i), 1 \leq j \leq |u_1| \}
\]
is equipped with the probability vector
\[
P_n \colon (v, u_1, \cdots, u_n, j) \mapsto \frac{1}{\lambda} \mu_{\mathbf{P}}([v]) \prod_{i = 1}^n \mathbb{P} [\vartheta_{\mathbf{P}}(v_i) = u_i].
\]
The random word $\mathcal W_n$ is defined via
\[
\mathcal W_n \colon (v, u_1, \cdots, u_n, j) \mapsto (u_1 \cdots u_n)_{[j,j+n-1]}.
\]
\end{lemma}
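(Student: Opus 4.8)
The plan is to verify two claims: that $P_n$ is genuinely a probability vector on the finite set $\Omega_n$, and that its pushforward under $\mathcal{W}_n$ is $\mu^{(n)}$. Nonnegativity of $P_n$ is immediate, since each value is a product of $1/\lambda > 0$, $\mu_{\mathbf{P}}([v]) \geq 0$ and probabilities $\mathbb{P}[\vartheta_{\mathbf{P}}(v_i) = u_i]$. The distributional identity will follow almost formally by comparison with the self-consistency relation of \Cref{LEM:key}, so the only genuine computation is the normalisation of $P_n$.

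For normalisation I would sum $P_n$ over $\Omega_n$, performing first the sum over $j \in \{1, \ldots, |u_1|\}$, which contributes a factor $|u_1|$, and then the independent sums over $u_2, \ldots, u_n$, each of which contributes $\sum_{u_i \in \vartheta(v_i)} \mathbb{P}[\vartheta_{\mathbf{P}}(v_i) = u_i] = 1$. What remains is $\frac{1}{\lambda} \sum_{v} \mu_{\mathbf{P}}([v]) \, \mathbb{E}[|\vartheta_{\mathbf{P}}(v_1)|]$. Since this summand depends only on the first letter $v_1$, I would collapse the sum over $v$ using the consistency of the frequency measure, $\sum_{v \in \mathcal L_{\vartheta}^n :\, v_1 = a} \mu_{\mathbf{P}}([v]) = \mu_{\mathbf{P}}([a]) = R_a$, together with $\mathbb{E}[|\vartheta_{\mathbf{P}}(a)|] = \sum_i M_{i,a}$, obtaining
\[
\frac{1}{\lambda} \sum_{a \in \mathcal A} R_a \sum_{i} M_{i,a} = \frac{1}{\lambda} \sum_i (M\mathbf{R})_i = \frac{1}{\lambda} \sum_i \lambda R_i = \lVert \mathbf{R} \rVert_1 = 1,
\]
where I invoke the eigenvector identity $M\mathbf{R} = \lambda \mathbf{R}$ and the normalisation $\lVert \mathbf{R} \rVert_1 = 1$.

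For the pushforward, I would group the atoms of $\Omega_n$ according to $v$ and write
\[
P_n(\mathcal{W}_n = w) = \sum_{v \in \mathcal L_\vartheta^n} \frac{1}{\lambda} \mu_{\mathbf{P}}([v]) \sum_{\substack{(u_1, \ldots, u_n, j) :\, 1 \le j \le |u_1| \\ (u_1\cdots u_n)_{[j,\,j+n-1]} = w}} \prod_{i=1}^n \mathbb{P}[\vartheta_{\mathbf{P}}(v_i) = u_i].
\]
Because letters are substituted independently, the product $\prod_i \mathbb{P}[\vartheta_{\mathbf{P}}(v_i) = u_i]$ is exactly the probability of the joint realisation $(\vartheta_{\mathbf{P}}(v_1), \ldots, \vartheta_{\mathbf{P}}(v_n)) = (u_1, \ldots, u_n)$. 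Reindexing the inner sum by $m = |u_1|$ and $j \in \{1,\ldots,m\}$, and summing over all tuples $(u_1,\ldots,u_n)$ with $|u_1| = m$ and $(u_1 \cdots u_n)_{[j,j+n-1]} = w$, reassembles $\mathbb{P}[\vartheta_{\mathbf{P}}(v)_{[j,j+n-1]} = w \wedge |\vartheta_{\mathbf{P}}(v_1)| = m]$; since $m = |u_1| \le |\vartheta|$ automatically, the resulting expression is precisely the right-hand side of \Cref{LEM:key}, hence equal to $\mu_{\mathbf{P}}([w]) = \mu^{(n)}(w)$.

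The one point demanding care is the window bookkeeping: the block $[j, j+n-1]$ must always lie inside $u_1 \cdots u_n$, which holds because $j \le |u_1|$ and $|u_i| \ge 1$ give $j + n - 1 \le |u_1| + (n-1) \le \sum_i |u_i|$, so the map $\mathcal{W}_n$ is well defined on all of $\Omega_n$. I expect the main (mild) obstacle to be stating cleanly the correspondence between realisation tuples $(u_1,\ldots,u_n)$ and the conditioning event $\{|\vartheta_{\mathbf{P}}(v_1)| = m\}$ appearing in \Cref{LEM:key}, as this is the step where the independence of the letter-wise substitution is genuinely used.
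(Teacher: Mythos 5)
Your proposal is correct and follows essentially the same route as the paper: the pushforward computation, grouping atoms by $v$ and $m=|u_1|$ and matching the inner sum with the probability $\mathbb{P}[\vartheta_{\mathbf{P}}(v)_{[j,j+n-1]}=w \wedge |\vartheta_{\mathbf{P}}(v_1)|=m]$ from \Cref{LEM:key}, is exactly the paper's argument. Your explicit normalisation check of $P_n$ (via $\sum_{v:\,v_1=a}\mu_{\mathbf{P}}([v])=R_a$ and $M\mathbf{R}=\lambda\mathbf{R}$) is correct but is left implicit in the paper, since it also follows by summing the pushforward identity over $w$.
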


\begin{proof}
Let $w \in \mathcal L_{\vartheta}^n$. We note that $\mathcal W_n^{-1}(\{w\})$ comprises all those elements in $\Omega_n$  such that the property $(u_1 \cdots u_n)_{[j,j+n-1]} = w$ holds. That is,
\[
P_n(\mc W_n = w) = \sum_{v \in \mathcal L_{\vartheta}^n} \sum_{u_1, \cdots, u_n} \sum_{j = 1}^{|u_1|} \frac{1}{\lambda} 
\mu_{\mathbf{P}} ([v]) \prod_{i = 1}^n \mathbb{P} [\vartheta_{\mathbf{P}}(v_i) = u_i] \, \delta_{w, (u_1 \cdots u_n)_{[j,j+n-1]}}.
\]
Comparing with the expression in Lemma~\ref{LEM:key}, we further note that
\[
\mathbb{P} [\vartheta_{\mathbf{P}}(v)_{[j,j+n-1]} = w \wedge |\vartheta_{\mathbf{P}}(v_1)| = m] = \sum_{u_1, \cdots, u_n} \prod_{i = 1}^n \mathbb{P} [\vartheta_{\mathbf{P}}(v_i) = u_i] \, \delta_{m,|u_1|} \, \delta_{w, (u_1 \cdots u_n)_{[j,j+n-1]}}.
\]
From this, we obtain that $P_n(\mathcal W_n = w) = \mu_{\mathbf{P}}([w])$ and the claim follows.
\end{proof}

\begin{remark}
\label{REM:V-distribution}
We may interpret the factors occurring in the definition of $P_n$ in terms of the renormalisation step. The term $\lambda^{-1}$ corresponds to a change of scale due to the expansion of the length of words, $\mu_{\mathbf{P}}([v])$ reflects the choice of a word before the inflation step, and each of $\mathbb{P}[\vartheta_{\mathbf{P}}(v_i) = u_i]$ gives the probability of mapping $v_i$ to the particular word $u_i$ as we apply the random substitution. Marginalized to (prefixes) of $v$, the distribution induced by $P_n$ and $\mu_{\mathbf{P}}$ are closely related but different in general. To be more precise, we will be interested in the random variable
\[
\mathcal V_{[1,m]}  \colon (v ,u_1 \cdots u_n, j) \mapsto v_{[1,m]}
\]
for some $m \leqslant n$. Integrating out the dependencies on $u_2,\ldots,u_n$ and $j$ in the first step, we obtain
\[
P_n(\mathcal V_{[1,m]} = v')
= \frac{1}{\lambda} \sum_{v, v_{[1,m]} = v'} \mu_{\mathbf{P}} ([v]) \sum_{u_1}  |u_1| \mathbb{P}[\vartheta_{\mathbf{P}}(v_1) = u_1]
 = \frac{1}{\lambda} \mu_{\mathbf{P}} ( [v'] ) \mathbb{E} [|\vartheta_{\mathbf{P}}(v_1)|].
\]
The additional factor $\lambda^{-1}\mathbb{E} [|\vartheta_{\mathbf{P}}(v_1)|]$ accounts for the fact that starting the inflation word decomposition of a word within some $u_1 \in \vartheta(v_1)$ is more probable if $\mathbb{E} [|\vartheta_{\mathbf{P}}(v_1)|]$ is large.
\end{remark}

\Cref{LEM:random-word} provides us with an alternative way to calculate the measure theoretic entropy that will be instrumental for the proof of our main theorems.

\begin{lemma}
\label{LEM:entropy-by-Wn}
The measure theoretic entropy $h(\mu_{\mathbf{P}})$ of $(X_{\vartheta}, S, \mu)$ satisfies
\[
h(\mu_{\mathbf{P}}) = \lim_{n \to \infty} \frac{1}{n} H_{P_n}(\mathcal W_n).
\]
\end{lemma}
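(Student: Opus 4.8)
The plan is to connect the measure-theoretic entropy $h(\mu_{\mathbf{P}})$, expressed through the standard formula $h(\mu_{\mathbf{P}}) = \lim_n \frac1n \sum_{u \in \mathcal{L}_{\vartheta}^n} -\mu_{\mathbf{P}}([u]) \log(\mu_{\mathbf{P}}([u]))$, to the entropy $H_{P_n}(\mathcal W_n)$ of the random word $\mathcal W_n$ constructed in \Cref{LEM:random-word}. The key structural fact, established in that lemma, is that $\mathcal W_n$ has distribution exactly $\mu^{(n)}$, i.e. $P_n(\mathcal W_n = w) = \mu_{\mathbf{P}}([w])$ for every $w \in \mathcal{L}_{\vartheta}^n$. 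Since the entropy of a finite-valued random map depends only on its distribution, this immediately gives
\[
H_{P_n}(\mathcal W_n) = \sum_{w \in \mathcal{L}_{\vartheta}^n} -P_n(\mathcal W_n = w)\log(P_n(\mathcal W_n = w)) = \sum_{w \in \mathcal{L}_{\vartheta}^n} -\mu_{\mathbf{P}}([w])\log(\mu_{\mathbf{P}}([w])).
\]
So the two quantities agree term-by-term before taking the limit.

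With this identity in hand, the proof reduces to a single observation. First I would record that the right-hand side above is precisely the unnormalised word-entropy sum $H_n := \sum_{w \in \mathcal{L}_{\vartheta}^n} -\mu_{\mathbf{P}}([w])\log(\mu_{\mathbf{P}}([w]))$ appearing in the definition of $h(\mu_{\mathbf{P}})$ recalled at the start of \Cref{S MT entropy}. Then dividing by $n$ and letting $n \to \infty$ gives
\[
\lim_{n \to \infty} \frac{1}{n} H_{P_n}(\mathcal W_n) = \lim_{n \to \infty} \frac{1}{n} H_n = h(\mu_{\mathbf{P}}),
\]
where the final equality is exactly the subshift entropy formula, valid because $\mu_{\mathbf{P}}$ is a shift-invariant measure supported on the subshift $X_{\vartheta}$ with $\mathcal{L}(X_{\vartheta}) = \mathcal{L}_{\vartheta}$. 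The existence of the limit is guaranteed by the general theory of measure-theoretic entropy for subshifts (via subadditivity of the sequence $(H_n)_n$, as used in \eqref{eq:refence_page_20_i}), so there is no convergence issue to resolve separately.

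There is essentially no substantive obstacle here: the entire content has already been absorbed into \Cref{LEM:random-word}, whose proof does the real work of matching $P_n(\mathcal W_n = w)$ with $\mu_{\mathbf{P}}([w])$ via the self-consistency relation of \Cref{LEM:key}. The only point deserving care is to make explicit that the entropy $H_{P_n}(\mathcal W_n)$ of the partition $\xi(\mathcal W_n)$ generated by the random map $\mathcal W_n$ on $(\Omega_n, P_n)$ coincides with the Shannon entropy of its image distribution; this is immediate from the definition $H_{\nu}(\mathcal U) = H_{\nu}(\xi(\mathcal U))$ together with the fact that the atoms of $\xi(\mathcal W_n)$ are the preimages $\mathcal W_n^{-1}(w)$, each carrying mass $P_n(\mathcal W_n = w)$. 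Thus the lemma is a direct corollary of \Cref{LEM:random-word} combined with the standard entropy formula for subshift measures, and the proof is a short chain of equalities rather than an estimate.
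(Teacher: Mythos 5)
Your proof is correct and follows essentially the same route as the paper: both reduce the claim to the identity $\mu^{(n)} = P_n \circ \mathcal W_n^{-1}$ from \Cref{LEM:random-word}, observe that entropy of a random map depends only on its image distribution, and then invoke the standard word-entropy formula for $h(\mu_{\mathbf{P}})$ on a subshift. Your version simply spells out the term-by-term equality of the entropy sums that the paper states in one line.
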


\begin{proof}
Let $I_n \colon v \mapsto v$ be the identity map on $\mc L_n$.
By the definition of measure theoretic entropy,
\[
h(\mu_{\mathbf{P}}) = \lim_{n \to \infty} \frac{1}{n} H_{\mu^{(n)}}(I_n).
\]
Since $\mu^{(n)} = P_n \circ W_n^{-1}$ by Lemma~\ref{LEM:random-word}, it follows that
$
H_{\mu^{(n)}}(I_n) = H_{P_n}(\mathcal W_n).
$
\end{proof}

\subsection{Control over large deviations}

A useful property of any primitive random substitution  $\vartheta_{\mathbf{P}}$ is that its Perron--Frobenius eigenvalue $\lambda$ can be regarded as an inflation factor. In the case that $\vartheta_{\mathbf{P}}$ is of constant length $\ell$, this interpretation is exact in the sense that $|\vartheta(v)| = \ell |v|$ for all $v \in \mc A^+$ and all realisations of $\vartheta_{\mathbf{P}}(v)$. If $\vartheta_{\mathbf{P}}$ is compatible, $|\vartheta(v)|$ is still independent of the realisation but might deviate slightly from $\lambda |v|$. However, we still obtain that $\lambda$ is arbitrarily close to the actual ratio $|\vartheta(v)|/|v|$ for large enough values of $|v|$. This is a consequence of the following result on the length of inflation words which is a mild adaptation of \cite[Proposition~5.8]{queffelec} and hence given without proof.

\begin{lemma}\label{L letter frequencies}
Let $\vartheta_{\mathbf{P}} = (\vartheta, \mathbf{P})$ be a primitive random substitution that is compatible. Then, given an $\varepsilon > 0$, there exists $n_{0} \in \mathbb{N}$ such that for all $v \in \mathcal{A}^{+}$ with $\lvert v \rvert > n_{0}$,
	\begin{align*}
	\lvert v \rvert (\lambda - \varepsilon) < \lvert \vartheta(v) \rvert < \lvert v \rvert  (\lambda + \varepsilon) \text{.}
	\end{align*}
Moreover, letting $\tau$ denote the modulus of the second largest eigenvalue of $M_{\vartheta}$, there exists a constant $D > 0$ so that, for all $i \in \{ 1, 2, \ldots, d \}$ and  $m \in \mathbb{N}$,
	\begin{align*}
	\lambda^{m} L_{i} - D \tau^{m} \leq \lvert \vartheta^{m}(a_{i}) \rvert \leq \lambda^{m} L_{i} + D \tau^{m}.
	\end{align*}
\end{lemma}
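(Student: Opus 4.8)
The statement is Lemma~\ref{L letter frequencies}, which contains two assertions: a two-sided bound on $|\vartheta(v)|$ in terms of $\lambda|v|$ for long words, and a geometric error estimate $|\,|\vartheta^m(a_i)| - \lambda^m L_i\,| \leq D\tau^m$ controlled by the spectral gap. Since the lemma is stated as a mild adaptation of \cite[Proposition~5.8]{queffelec} and given without proof, I will sketch the natural argument.

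\textbf{Setup via compatibility and abelianisation.} The plan is to exploit that compatibility makes $|\vartheta(v)|$ deterministic: for a compatible random substitution, all realisations of $\vartheta_{\mathbf P}(v)$ have the same abelianisation, and hence the same length. The key identity I would establish is that the abelianisation evolves linearly under the substitution matrix, namely $\Phi(\vartheta(v)) = M\Phi(v)$ for all $v$, where $M = M_{\vartheta_{\mathbf P}}$ with $M_{i,j} = |\vartheta(a_j)|_{a_i}$ (this explicit form of $M$ under compatibility is recorded earlier in the excerpt). Iterating gives $\Phi(\vartheta^m(a_i)) = M^m \mathbf{e}_i$, so that $|\vartheta^m(a_i)| = \mathbf{1}^\top M^m \mathbf{e}_i$, where $\mathbf 1 = (1,\ldots,1)^\top$.

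\textbf{Second assertion via spectral decomposition.} For the geometric estimate I would decompose $M^m$ using Perron--Frobenius theory. Since $M$ is primitive with simple leading eigenvalue $\lambda$ and corresponding normalised eigenvectors $\mathbf L, \mathbf R$ satisfying $\mathbf L^\top \mathbf R = 1$, we can write $M^m = \lambda^m \mathbf R \mathbf L^\top + E_m$, where $E_m$ collects the contribution of the remaining spectrum and satisfies $\|E_m\| \leq C\tau^m$ for some constant $C$, with $\tau < \lambda$ the modulus of the second largest eigenvalue. Applying $\mathbf 1^\top(\cdot)\mathbf e_i$ and using $\mathbf 1^\top \mathbf R = 1$ (after noting the normalisation $\|\mathbf R\|_1 = 1$) together with $\mathbf L^\top \mathbf e_i = L_i$ yields $|\vartheta^m(a_i)| = \lambda^m L_i + \mathbf 1^\top E_m \mathbf e_i$, and bounding the error term by $D\tau^m$ with $D = C\|\mathbf 1\|$ gives the two-sided estimate. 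One subtlety to address is the case where the second eigenvalue is not semisimple, in which case the true bound carries a polynomial factor $m^{k}$; I would handle this either by absorbing it into a slightly enlarged $\tau$ (any $\tau' \in (\tau,\lambda)$ works) or by noting the statement allows choosing $D$ freely, and $m^k \tau^m \leq D (\tau')^m$.

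\textbf{First assertion via uniform control.} For the first bound, I would deduce it from the second applied letterwise. Writing $|\vartheta(v)| = \sum_i |v|_{a_i}\,|\vartheta(a_i)|$ by compatibility, the ratio $|\vartheta(v)|/|v|$ is a convex combination of the per-letter ratios $|\vartheta(a_i)|$ weighted by letter frequencies. The cleaner route, which mirrors \cite{queffelec}, is to observe that for long legal words the empirical letter frequencies of $v$ converge uniformly to the Perron frequencies $R_i$; combined with $\sum_i R_i |\vartheta(a_i)| = \lambda$ (the eigenvalue relation $\mathbf L^\top M \mathbf R = \lambda$ read appropriately, or directly $\mathbf 1^\top M \mathbf R = \lambda \mathbf 1^\top \mathbf R = \lambda$), this forces $|\vartheta(v)|/|v| \to \lambda$ uniformly in $v$, yielding the existence of $n_0$. \textbf{The main obstacle} is precisely establishing this \emph{uniformity} over all legal words $v$ of a given length rather than along a single sequence: one must invoke primitivity to bound the deviation of letter frequencies in any legal word, for instance via a uniform mixing or bounded-discrepancy estimate for primitive substitutions, which is where the adaptation of \cite[Proposition~5.8]{queffelec} does the real work.
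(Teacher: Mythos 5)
The paper gives no proof of \Cref{L letter frequencies} at all --- it is described as a mild adaptation of Queff\'{e}lec's Proposition~5.8 and stated without proof --- and your sketch is precisely the standard argument that adaptation consists of: compatibility makes $\Phi(\vartheta^m(v)) = M^m\Phi(v)$ realisation-independent, the splitting $M^m = \lambda^m \mathbf{R}\mathbf{L}^\top + E_m$ with $\lVert E_m\rVert$ controlled by $\tau$ gives the second assertion, and the first follows from uniform convergence of letter frequencies in long legal words together with $\mathbf{1}^\top M \mathbf{R} = \lambda$. Two remarks. First, your (implicit) restriction to \emph{legal} words in the first assertion is not optional: as literally stated, with $v$ ranging over all of $\mathcal{A}^{+}$, the claim is false whenever the column sums of $M$ are not all equal to $\lambda$ --- for the random Fibonacci substitution and $v = a^n$ one gets $\lvert\vartheta(v)\rvert/\lvert v\rvert = 2 \neq \lambda$ --- so the statement must be read with $v \in \mathcal{L}_{\vartheta}$, which is how it is used later in the paper; your argument proves exactly that corrected version. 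Second, your caveat about a non-semisimple second eigenvalue is right, and only the first of your two fixes actually works (replace $\tau$ by some $\tau' \in (\tau,\lambda)$; a constant $D$ cannot absorb a genuine polynomial factor, nor the case $\tau = 0$ with $N \neq 0$) --- note the paper itself silently inserts such an $r$ with $\lvert\tau\rvert < r < 1$ when it invokes this bound in \Cref{LEM:P-estimate}. The one step you leave as a black box, uniformity of letter frequencies over all legal words of a given length, is genuinely the content of the cited Queff\'{e}lec proposition; it is obtained by decomposing a long legal word into level-$k$ inflation words up to boundary terms of bounded length and applying your second assertion to each block, and compatibility ensures this carries over verbatim from the deterministic case, so I would not count it as a gap here.
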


In general, such a strong statement does not hold if we drop the assumption of compatibility. However, the probability that $|\vartheta_{\mathbf{P}}(v)|$ deviates by a positive fraction from $\lambda |v|$ decays quickly with $|v|$ for typical choices of $v$. We will make this more precise in the following lemma in a form that is useful for our purposes.

\begin{lemma}
\label{LEM:A_n-convergence}
Let $\lambda_- < \lambda < \lambda_+$ and for each $n \in \N$ fix a positive number $m = m(n) < n$ such that $\lim_{n \to \infty} m(n) = \infty$. Further, let
\[
A_n = \{ (v, u_1,\ldots, u_n, j) : \lambda_- m \leq |u_2 \cdots u_m| \leq \lambda_+ m \},
\]
for all $n \in \N$.
Then, $\lim_{n \to \infty} P_n(A_n) = 1$.
\end{lemma}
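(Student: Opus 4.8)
The plan is to write $\lvert u_2 \cdots u_m \rvert = \sum_{i=2}^{m} \lvert u_i \rvert$ and to show that, after dividing by $m$, this sum concentrates around $\lambda$. I would split the argument into two essentially independent steps: a conditional concentration estimate for the sum given the base word $v$, and an ergodic estimate showing that the associated conditional mean is close to $\lambda$ for $P_n$-typical $v$. The decision to exclude $u_1$ from the event $A_n$ is convenient here, since $u_1$ is the single inflation word that is size-biased under $P_n$ (it carries the marker $j$), whereas $u_2, \ldots, u_n$ are sampled cleanly.

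\emph{Conditional concentration.} Conditioning on $v$ and summing the weight from \Cref{LEM:random-word} over $j \in \{1,\ldots,\lvert u_1\rvert\}$, the conditional law of $(u_1,\ldots,u_n)$ factorises (up to normalisation) as $\bigl[\lvert u_1\rvert \, \mathbb{P}[\vartheta_{\mathbf{P}}(v_1)=u_1]\bigr]\prod_{i=2}^{n}\mathbb{P}[\vartheta_{\mathbf{P}}(v_i)=u_i]$. In particular, given $v$, the words $u_2,\ldots,u_m$ are independent with $u_i$ distributed as $\vartheta_{\mathbf{P}}(v_i)$. Each length $\lvert u_i\rvert$ lies in $\{1,\ldots,\lvert\vartheta\rvert\}$, so $\sum_{i=2}^{m}\lvert u_i\rvert$ is a sum of $m-1$ independent variables of uniformly bounded range, and a standard concentration inequality (Hoeffding) gives, for every $\delta>0$ and uniformly in $v$,
\[
P_n\Bigl( \bigl\lvert \textstyle\sum_{i=2}^{m}\lvert u_i\rvert - \sum_{i=2}^{m}\mathbb{E}[\lvert\vartheta_{\mathbf{P}}(v_i)\rvert] \bigr\rvert > \delta m \Bigm| v \Bigr) \leq 2\exp(-c\,\delta^2 m)
\]
for some constant $c=c(\lvert\vartheta\rvert)>0$. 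As $m=m(n)\to\infty$ and the bound does not depend on $v$, the same estimate persists after integrating out $v$.

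\emph{Ergodic average of the conditional means.} Let $f(a)=\mathbb{E}[\lvert\vartheta_{\mathbf{P}}(a)\rvert]=\sum_i M_{i,a}$, a bounded function on $\mathcal A$, regarded as a cylinder function on $X_{\vartheta}$. Using $\mu_{\mathbf{P}}([a])=R_a$ and $M\mathbf{R}=\lambda\mathbf{R}$, together with $\lVert\mathbf{R}\rVert_1=1$, one finds $\int f \dd\mu_{\mathbf{P}} = \sum_a R_a f(a)=\lambda$. Since $\mu_{\mathbf{P}}$ is ergodic (\Cref{P frequency measure}), Birkhoff's theorem gives $\frac1m\sum_{i=2}^{m} f(v_i)\to\lambda$ for $\mu_{\mathbf{P}}$-a.e.\ $v$ as $m\to\infty$ (the dropped boundary term $\frac1m f(v_1)$ vanishes by boundedness of $f$), so $B_{n,\delta}=\{v : \lvert \frac1m\sum_{i=2}^{m}f(v_i)-\lambda\rvert>\delta\}$ satisfies $\mu_{\mathbf{P}}(B_{n,\delta})\to0$. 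By \Cref{REM:V-distribution}, the law of $v$ under $P_n$ has density $f(v_1)/\lambda\leq\lvert\vartheta\rvert/\lambda$ with respect to $\mu_{\mathbf{P}}$, whence $P_n(B_{n,\delta})\leq(\lvert\vartheta\rvert/\lambda)\,\mu_{\mathbf{P}}(B_{n,\delta})\to0$.

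\emph{Conclusion.} Off the two exceptional events above, $\frac1m\lvert u_2\cdots u_m\rvert$ lies within $2\delta$ of $\lambda$. Given $\lambda_-<\lambda<\lambda_+$, fix $\delta>0$ with $\lambda_-\leq\lambda-2\delta$ and $\lambda+2\delta\leq\lambda_+$; then $A_n^c$ is contained in the union of these two events, each of $P_n$-probability tending to $0$, so $P_n(A_n)\to1$. The one point requiring care is that $v$ is not $\mu_{\mathbf{P}}$-distributed under $P_n$ but carries a first-letter size-biasing; this is handled precisely by the uniformly bounded density of \Cref{REM:V-distribution} combined with the exclusion of the anomalous word $u_1$ from $A_n$.
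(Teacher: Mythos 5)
Your proof is correct, and its overall skeleton matches the paper's: condition on the base word $v$, show that $\lvert u_2\cdots u_m\rvert$ concentrates around its conditional mean, use ergodicity of $\mu_{\mathbf{P}}$ to identify that mean as $\lambda m + o(m)$ for typical $v$, and transfer typicality from $\mu_{\mathbf{P}}$ to $P_n$. The execution, however, differs in two ways worth recording. For the concentration step, the paper groups the indices $i$ by the letter $v_i = a$, first invoking ergodicity to pin down the letter counts $\lvert v_{[2,m]}\rvert_a \approx m R_a$ and then applying a large deviation bound to each i.i.d.\ block; the resulting exponential estimate carries constants $c(v)$ and $m_0(v)$ depending on $v$, so the paper must close the argument with dominated convergence against the density $\lambda^{-1}\mathbb{E}[\lvert\vartheta_{\mathbf{P}}(v_1)\rvert]$. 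You instead apply Hoeffding's inequality directly to the sum $\sum_{i=2}^{m}\lvert u_i\rvert$ of independent, uniformly bounded, but not identically distributed variables, which gives a tail bound \emph{uniform} in $v$ and cleanly decouples the probabilistic concentration from the ergodic input --- the latter is then only needed for the deterministic averages $\frac{1}{m}\sum_{i}f(v_i)$ with $f(a)=\mathbb{E}[\lvert\vartheta_{\mathbf{P}}(a)\rvert]$, and the identity $\sum_a R_a f(a)=\lambda$ is exactly the paper's computation $\sum_{a,b}M_{ba}R_a=\lambda$. For the transfer step you replace dominated convergence by the explicit bound $P_n(\mathcal V_{[1,m]}\in B)\le(\lvert\vartheta\rvert/\lambda)\,\mu_{\mathbf{P}}(B)$ from \Cref{REM:V-distribution}. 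Both routes are valid; yours is slightly more quantitative and avoids the $v$-dependent constants, and your observation that excluding the size-biased word $u_1$ from $A_n$ is what makes the conditional law of $(u_2,\ldots,u_m)$ a clean product is exactly the right point to emphasise.
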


\begin{proof}
Let $A_n^u := \{ (u_2,\ldots,u_m) \colon \lambda_- m \leq |u_2 \cdots u_m| \leq \lambda_+ m \}$ be the set of $(u_2,\ldots,u_m)$-tuples that extend to elements in $A_n$. By definition of $P_n$ and $A_n$,
\begin{align*}
P_n(A_n) & = \frac{1}{\lambda} \sum_{v_{[1,m]}} \mu_{\mathbf{P}} ([ v_{[1,m]} ]) \sum_{u_1} |u_1| \mathbb{P} [ \vartheta_{\mathbf{P}}(v_1) = u_1] \sum_{ (u_2,\ldots,u_m) \in A_n^u} \, \prod_{i = 1}^m \mathbb{P}[\vartheta_{\mathbf{P}}(v_i) = u_i]
\\ & = \frac{1}{\lambda} \sum_{v_{[1,m]}} \mu_{\mathbf{P}} ([v_{[1,m]}]) \mathbb{E} [|\vartheta_{\mathbf{P}}(v_1)|] 
\, \mathbb{P}[\lambda_- m \leq |\vartheta_{\mathbf{P}}(v_2 \cdots v_m)| \leq \lambda^+ m].
\end{align*}
We claim that for $\mu_{\mathbf{P}}$-almost every $v \in X_{\vartheta}$, it is
\begin{equation}
\label{EQ:as-lower-bound}
\lim_{m \to \infty} \mathbb{P}[ \lambda_- m \leq |\vartheta_{\mathbf{P}}(v_2 \cdots v_m)| \leq \lambda_+ m ] = 1.
\end{equation}
This can be seen as follows.  By ergodicity of $\mu_{\mathbf{P}}$, for $\mu_{\mathbf{P}}$-almost every $v$ and every given $\delta > 0$ it holds that 
\[
 m(R_a - \delta) \leq |v_{[2,m]}|_a \leq m(R_a + \delta),
\]
for each $a \in \mathcal A$ and large enough $m \in \N$. In this case, it follows by standard large deviation arguments (see for example \cite{denHollander}) that for all $\delta'>0$,
\begin{equation}
\label{EQ:inflation-word-length-bound}
\sum_{i,v_i = a} |\vartheta_{\mathbf{P}}(v_i)| \leq (1 + \delta') m (R_a + \delta) \mathbb{E}[|\vartheta_{\mathbf{P}}(a)|],
\end{equation}
up to a set  $E = E(m,v,\delta,\delta')$ whose probability decays exponentially with $m$. By the definition of the substitution matrix $M$, we have
\[
\mathbb{E}[|\vartheta_{\mathbf{P}}(a)|]
= \sum_{b \in \mc A} \mathbb{E}[|\vartheta_{\mathbf{P}}(a)|_b] = \sum_{b \in \mc A} M_{ba}.
\]
Summing over $a \in \mathcal A$ in \eqref{EQ:inflation-word-length-bound}, we obtain that
\[
|\vartheta_{\mathbf{P}}(v_2 \cdots v_m)| \leq m (1 + \delta') \biggl( \sum_{a,b \in \mathcal A} M_{ba} R_a + \delta |\vartheta| \biggr)
= m (1 + \delta')(\lambda + \delta |\vartheta|),
\]
up to an exponentially decaying probability.
Choosing $\delta, \delta'$ small enough, we get 
\[
|\vartheta_{\mathbf{P}}(v_2 \cdots v_m)| \leq \lambda_+ m
\]
in these cases. The estimate for the lower bound works by completely analogous arguments.
Hence, we have that there exists $c = c(v) > 0$ and $m_0 = m_0(v)$ such that
\[
\mathbb{P}[\lambda_- m \leq |\vartheta_{\mathbf{P}}(v_2 \cdots v_m)| \leq \lambda_+ m ] \geq 1 - \me^{- m c},
\]
for all $m \geq m_0$. In particular, \eqref{EQ:as-lower-bound} holds $\mu_{\mathbf{P}}$-almost surely and we get by dominated convergence,
\[
\lim_{n \to \infty} P_n(A_n) = \frac{1}{\lambda} \int_{X_{\vartheta}} \mathbb{E}[|\vartheta_{\mathbf{P}}(v_1)|] \dd \mu_{\mathbf{P}}(v) = 1.
\]
\end{proof}

\subsection{The upper bound}

As a first step towards the proof of our main theorems, we establish the sequence of upper bounds for the measure theoretic entropy that we stated in \Cref{THM:main} and \Cref{THM:main-urp}. 
For ease of notation, we let $\varphi$ denote the function 
\[
\varphi \colon x \mapsto - x \log(x),
\] 
for positive $x \in \mathbb{R}$, and set $\varphi(0) = 0$.  
To handle various terms that are of no concern for the main calculations, we also recall some standard notation on error terms. Given a positive function $f \colon \N \to \R$, we denote by $O(f)$ any function $g \colon \N \to \R$ such that $g(n)/f(n)$ is bounded in $n$. Similarly, we write $o(f)$ for a function $g \colon \N \to \R$ such that $g(n)/f(n)$ converges to $0$ as $n \to \infty$.  

\begin{prop} \label{PROP:upper-bound}
Let $\vartheta_{\mathbf{P}}$ be a primitive random substitution. Then,
\[
h(\mu_{\mathbf{P}}) \leq \frac{1}{\lambda^k-1} \mathbf{H}_k^{\top} \mathbf{R},
\]
for all $k \in \N$.
\end{prop}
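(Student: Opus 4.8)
We want to establish the upper bound $h(\mu_{\mathbf P}) \le \frac{1}{\lambda^k-1}\mathbf H_k^\top\mathbf R$ for every $k \in \N$. The natural tool is \Cref{LEM:entropy-by-Wn}, which expresses $h(\mu_{\mathbf P}) = \lim_{n\to\infty}\frac1n H_{P_n}(\mathcal W_n)$. So the plan is to bound $H_{P_n}(\mathcal W_n)$ from above by a quantity whose $n$-normalized limit is $\frac{1}{\lambda^k-1}\mathbf H_k^\top\mathbf R$, and it suffices to do this for $k=1$ and then iterate (or apply the $k=1$ bound to $\vartheta_{\mathbf P}^k$, since $\lambda^k$ is the Perron eigenvalue of $\vartheta_{\mathbf P}^k$, $X_{\vartheta^k}=X_\vartheta$, and $\mu_{\mathbf P}$ is the same frequency measure — this reduction should collapse the whole proof to a single renormalisation step).

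**Setting up the renormalisation inequality.** The key structural fact is that $\mathcal W_n$ reads a length-$n$ window from a concatenation of inflation words $u_1\cdots u_n$, where $u_i\in\vartheta(v_i)$ and the window starts at position $j\le|u_1|$ inside the first block. So I would split the information in $\mathcal W_n$ into: (i) the underlying word $\mathcal V_n = v$ (a length-$n$ legal word), together with the choice of origin $j$ and the block lengths; and (ii) the randomness in the substitution images $u_i$. The heuristic is that $\mathcal W_n$ is determined by the preimage word $\mathcal V$ spanning the window together with the realisations of $\vartheta_{\mathbf P}$ on the relevant letters. Since the window of length $n$ in the inflated sequence covers on the order of $n/\lambda$ preimage letters, the entropy $H_{P_n}(\mathcal W_n)$ should be controlled by (entropy of the shorter preimage word) $+$ (entropy contributed by the substitution choices on those $\approx n/\lambda$ letters), giving a recursive inequality roughly of the form
\[
H_{P_n}(\mathcal W_n) \;\lesssim\; \tfrac{1}{\lambda}\,H_{\text{(preimage)}} \;+\; \tfrac{1}{\lambda}\,\mathbf H_1^\top\mathbf R\, n \;+\; o(n).
\]
Dividing by $n$, taking $n\to\infty$, and recognizing the left side and the first right-hand term as the same limit $h(\mu_{\mathbf P})$ yields $h(\mu_{\mathbf P}) \le \frac{1}{\lambda}h(\mu_{\mathbf P}) + \frac{1}{\lambda}\mathbf H_1^\top\mathbf R$, i.e.\ $h(\mu_{\mathbf P})\le\frac{1}{\lambda-1}\mathbf H_1^\top\mathbf R$, which is the $k=1$ case. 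Concretely I would use the subadditivity and conditioning properties listed in the entropy lemma, bounding $H_{P_n}(\mathcal W_n)\le H_{P_n}(\mathcal V,\text{subst.\ data})$ and then decomposing that joint entropy via property (3).

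**Controlling the combinatorial bookkeeping.** The delicate point is making the phrase ``covers about $n/\lambda$ preimage letters'' rigorous, because $|\vartheta_{\mathbf P}(v_i)|$ is random and the number of preimage letters meeting the window fluctuates. This is exactly where \Cref{LEM:A_n-convergence} enters: restricting to the event $A_n$ on which the cumulative inflation length $|u_2\cdots u_m|$ stays within $[\lambda_- m,\lambda_+ m]$ has probability tending to $1$, so the number of preimage blocks needed to cover a window of length $n$ is $(1+o(1))\,n/\lambda$ with overwhelming probability. On the complementary (small-probability) event I would bound the entropy contribution crudely — using property (1), $H\le\log\#\Ima$, together with the fact that the number of words of length $n$ grows at most exponentially — and check that $P_n(A_n^c)\to 0$ kills this term after dividing by $n$ (here a standard bound of the form $\varphi(p)\le p\log(\#\text{alphabet of windows}) + H(p)$ is useful to absorb the entropy of the indicator of $A_n$).

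**Main obstacle.** The hardest part is the honest accounting of the entropy contributed by the substitution choices and the origin $j$, i.e.\ converting the intuitive ``$\frac1\lambda\mathbf H_1^\top\mathbf R$ per unit length'' into a rigorous $o(n)$-controlled inequality. One must (a) express the entropy of the substitution data on the preimage window as a sum over letters, which by stationarity and ergodicity of $\mu_{\mathbf P}$ has frequency-weighted rate $\sum_a R_a H_{\mathbb P}(\vartheta_{\mathbf P}(a)) = \mathbf H_1^\top\mathbf R$; (b) handle the extra $\log|u_1|$-type entropy coming from the choice of origin $j$, which contributes only $O(\log n)=o(n)$ and is therefore harmless; and (c) ensure the boundary blocks partially covered by the window (at most two of them) contribute only $O(1)$ entropy. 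Once these are in place, passing to $\vartheta_{\mathbf P}^k$ replaces $\mathbf H_1$ by $\mathbf H_k$ and $\lambda$ by $\lambda^k$, delivering the stated bound for all $k$.
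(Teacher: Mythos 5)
Your proposal follows essentially the same route as the paper's proof: reduce to $k=1$ via powers of $\vartheta_{\mathbf{P}}$, bound $H_{P_n}(\mathcal W_n)$ by the joint entropy of the preimage word, the origin, and the substitution choices on $\approx n/\lambda$ letters, use \Cref{LEM:A_n-convergence} to make the bad event negligible, and solve the resulting recursive inequality $h(\mu_{\mathbf{P}}) \leq \lambda^{-1}(h(\mu_{\mathbf{P}}) + \mathbf H_1^{\top}\mathbf R)$. The only point you gloss over is that the marginal law of the preimage word under $P_n$ is $\mu_{\mathbf{P}}$ reweighted by $\lambda^{-1}\mathbb{E}[|\vartheta_{\mathbf{P}}(v_1)|]$, so identifying its normalized entropy with $h(\mu_{\mathbf{P}})$ requires the Shannon--McMillan--Breiman theorem (as the paper does) rather than being immediate, but this is a detail within the same argument.
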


\begin{proof}
It suffices to show the relation for $k = 1$, since $\mu_{\mathbf{P}}$ remains the same measure for all powers of $\vartheta_{\mathbf{P}}$. 
By Lemma~\ref{LEM:entropy-by-Wn}, it is possible to control $h(\mu_{\mathbf{P}})$ via the entropy of $\mathcal W_n$. We wish to refer to data in $\Omega_n$ via a set of appropriate random variables. To this end we introduce (or recall in the case of $\mathcal{V}_{[1,m]}$)
\begin{itemize}
\item $\mathcal V_{[1,m]}  \colon (v ,u_1 \cdots u_n, j) \mapsto v_{[1,m]}$ for all $1 \leq m \leq n$,
\item $\mathcal J \colon (v, u_1, \ldots, u_n, j) \mapsto j$,
\item $\mathcal U_k \colon (v, u_1 \cdots u_n,j) \mapsto u_k$ for all $1 \leq k \leq n$,
\item $\mathcal U_{[k,\ell]} = (\mathcal U_k, \ldots, \mathcal U_{\ell})$ for $1 \leq k \leq \ell \leq n$.
\end{itemize}
Also recall that $\mathcal W_n$ is given by $(u_1 \cdots u_n)_{[j,j+n-1]}$. 
On average, the words $u_k$ have length $\lambda$, and therefore, in typical situations, $\mathcal W_n$ in fact only depends on $u_k$ with $1 \leq k \leq m(n) $, with $m(n) \approx n / \lambda$. This motivates the following notation. Fix a small $\varepsilon > 0$ and let $\lambda_- = \lambda - \varepsilon$. Further, let $n \in \N$ and
\[
m = m_+(n) = \Bigl\lceil\frac{n}{\lambda_-}\Bigr\rceil.
\]
As a first step, we bound the entropy by
\[
H_{P_n}(\mathcal W_n) \leq H_{P_n} (\mathcal U_{[1,m]},\mathcal J) + H_{P_n} (\mathcal W_n \, | \, \mathcal U_{[1,m]}, \mathcal J).
\]
Setting 
\[
A_n = \{ (v,u_1, \ldots, u_n, j) \in \Omega_n \mid |u_2\cdots u_m| \geq n \}.
\]
we note that on $A_n$, $\mathcal W_n$ is given by $(u_1 \cdots u_m)_{[j, j+n-1]}$ and hence is completely determined by $\mathcal U_{[1,m]}$ and $\mathcal J$. On $A_n^C$, we can bound the (conditioned) entropy of $\mathcal W_n$ by
\[
\log (\# \mathcal L_{\vartheta}^n) \leq n \log(\# \mathcal A).
\]
With these two observations, we get
\[
H_{P_n} (\mathcal W_n \, | \, \mathcal U_{[1,m]}, \mathcal J ) \leq P_n(A_n^C) \, n \log(\# \mathcal A).
\]
By Lemma~\ref{LEM:A_n-convergence}, the term $P_n(A_n^C)$ converges to $0$ as $n \to \infty$ and hence
\[
H_{P_n}(\mathcal W_n) \leq H_{P_n}(\mathcal U_{[1,m]}, \mathcal J) + o(n).
\]
On the other hand, since both $\mathcal J$ and $\mathcal U_1$ have a bounded number of realisations,
\[
H_{P_n}(\mathcal U_{[1,m]}, \mathcal J) = H_{P_n}(\mathcal U_{[2,m]}) + O(1).
\]
Conditioning on $\mathcal V_{[1,m]}$, we therefore get
\begin{equation}
\label{EQ:entropy-UB-splitting}
H_{P_n}(\mathcal W_n) \leq H_{P_n}(\mathcal U_{[2,m]}) + o(n) \leq H_{P_n}(\mathcal V_{[1,m]}) + H_{P_n}(\mathcal U_{[2,m]} \, | \, \mathcal V_{[1,m]}) + o(n).
\end{equation}
For the calculation of the entropy $H_{P_n} (\mathcal V_{[1,m]})$, recall from \Cref{REM:V-distribution} that
\begin{equation}
\label{EQ:P_n-v}
P_n(\mathcal V_{[1,m]} = v_{[1,m]}) = \frac{1}{\lambda} \mu_{\mathbf{P}} ( [v_{[1,m]}] ) \mathbb{E} [|\vartheta_{\mathbf{P}}(v_1)|].
\end{equation}
 In the following, we will convince ourselves that the modification by the factor $\lambda^{-1}  \mathbb{E} [|\vartheta_{\mathbf{P}}(v_1)|]$ is inessential for our purposes. To this end, we make use of the general observation that $\varphi(pq) = p \varphi(q) + q \varphi(p)$. For an arbitrary probability vector $(p_i)_{i \in I}$ and a finite sequence of real numbers $q = (q_i)_{i \in I}$, this implies
\[
\sum_{i \in I} \varphi(p_i q_i) \leqslant \max_{i \in I} \varphi(q_i) + \sum_{i \in I} q_i \varphi(p_i).
\]
Using this for $I = \mc L_{\vartheta}$, and the probability vector with entries $\mu_{\mathbf{P}} ( [v_{[1,m]}] )$, we obtain via \eqref{EQ:P_n-v},

\[
H_{P_n}(\mathcal V_{[1,m]}) 
= \sum_{v_{[1,m]} \in \mc L_{\vartheta}^m} \varphi \biggl( \frac{1}{\lambda} \mu_{\mathbf{P}} ( [ v_{[1,m]} ] ) \mathbb{E}[|\vartheta_{\mathbf{P}}(v_1)|]  \biggr)
= O(1) + \sum_{v_{[1,m]}\in \mc L_{\vartheta}^m} \frac{\mathbb{E}[|\vartheta_{\mathbf{P}}(v_1)|]}{\lambda} \varphi(\mu_{\mathbf{P}} ([v_{[1,m]}]) ).
\]
Recall that $m = m(n)$ implicitly depends on $n$ and note that we can rewrite
\[
\frac{1}{n} \sum_{v_{[1,m]}\in \mc L_{\vartheta}^m} \frac{\mathbb{E}[|\vartheta_{\mathbf{P}}(v_1)|]}{\lambda} \varphi(\mu_{\mathbf{P}} ([v_{[1,m]}]) )
= \frac{m}{n} \int_{X_{\vartheta}} \frac{- \log(\mu_{\mathbf{P}}([v_{[1,m]}]))}{m} \, \frac{\mathbb{E}[|\vartheta_{\mathbf{P}}(v_1)|]}{\lambda}  \dd \mu_{\mathbf{P}}(v).
\]

Due to the ergodicity of $\mu_{\mathbf{P}}$ and the Shannon-MacMillan-Breiman theorem, we have that $- \log(\mu_{\mathbf{P}}([v_{[1,m]}]) )/m$ converges to $h(\mu_{\mathbf{P}})$ 
in $L^1(X_{\vartheta},\mu_{\mathbf{P}})$ and hence we also get $L^1$-convergence for the product with an arbitrary uniformly bounded function $g$ on $X_{\vartheta}$. Applying this to $g\colon v \mapsto \lambda^{-1} \mathbb{E}[|\vartheta_{\mathbf{P}}(v_1)|]$ 
yields

\begin{equation} \label{EQ:Vm-entropy}
\lim_{n \to \infty} \frac{1}{n} H_{P_n}(
\mathcal V_{[1,m]}) 
= \frac{1}{\lambda_-} h(\mu_{\mathbf{P}}) \sum_{v_1 \in \mc A} \mu_{\mathbf{P}}([v_1]) \frac{\mathbb{E} [|\vartheta_{\mathbf{P}}(v_1)|]}{\lambda} 
= \frac{1}{\lambda_-} h(\mu_{\mathbf{P}}) \frac{1}{\lambda} \sum_{a,b \in \mathcal A} M_{ba}  R_a  
= \frac{1}{\lambda_-} h(\mu_{\mathbf{P}}).
\end{equation}

We next turn to the calculation of the conditional entropy $H_{P_n}(\mathcal U_{[2,m]} \, | \, \mathcal V_{[1,m]})$.  Denoting by $P_{n,v_{[1,m]}}$ the normalized restriction of $P_n$ to $\{ \mathcal V_{[1,m]} = v_{[1,m]} \}$, we get via straightforward calculation
\[
P_{n, v_{[1,m]}} [\mathcal U_{[2,m]} = (u_2, \cdots, u_m)] = \prod_{i = 2}^{m} \mathbb{P}[\vartheta_{\mathbf{P}}(v_i) = u_i].
\]
and thereby
\[
H_{P_{n, v_{[1,m]}}} (\mathcal U_{[2,m]}) = \sum_{i = 2}^m H_{\mathbb{P}}(\vartheta_{\mathbf{P}}(v_i)) = \mathbf{H}_1^{\top} \Phi(v_{[2,m]}).
\]
Using \eqref{EQ:P_n-v}, this yields
\begin{align*}
H_{P_n}(\mathcal U_{[2,m]} \, | \, \mathcal V_{[1,m]})
= \frac{1}{\lambda} \sum_{v_{[1,m]} \in \mc L_{\vartheta}^m} 
\mu_{\mathbf{P}} ( [v_{[1,m]}] ) \mathbb{E}[|\vartheta_{\mathbf{P}}(v_1)|] \, \mathbf{H}_1^{\top}  \Phi(v_{[2,m]}).
\end{align*}
For the corresponding asymptotic behaviour we note that, again by ergodicity of $\mu_{\mathbf{P}}$, $\Phi(v_{[2,m]})/m$ converges to $\mathbf{R}$ for $\mu_{\mathbf{P}}$-almost every $v$. Thus, 
\begin{equation} \label{EQ: Um-Vm-entropy}
\lim_{n \to \infty} \frac{1}{n} H_{P_n}(\mathcal U_{[2,m]} \, | \, \mathcal V_{[1,m]})
= \frac{1}{\lambda_-} \mathbf{H}_1^{\top} \mathbf{R} \sum_{v_1 \in \mc A} \mu_{\mathbf{P}} ([v_1]) \frac{\mathbb{E}[|\vartheta_{\mathbf{P}}(v_1)|]}{\lambda} = \frac{1}{\lambda_-} \mathbf{H}_1^{\top} \mathbf{R}.
\end{equation}
Hence, combining the contributions from \eqref{EQ:Vm-entropy} and \eqref{EQ: Um-Vm-entropy}, we get by \eqref{EQ:entropy-UB-splitting},
\[
h(\mu_{\mathbf{P}}) = \lim_{n \to \infty} \frac{1}{n} H_{P_n}(\mathcal W_n) \leq \frac{1}{\lambda_-} \bigl( h(\mu_{\mathbf{P}}) + \mathbf{H}_1^{\top} \mathbf{R}). 
\]
As $\varepsilon \to 0$, we obtain $\lambda_- \to \lambda$ and hence
\[
h(\mu_{\mathbf{P}}) \leq \frac{1}{\lambda - 1} \mathbf{H}_1^{\top} \mathbf{R},
\]
completing the proof.
\end{proof}

The sequence of vectors $(\mathbf{H}_n^{\top} )_{n \in \N}$ can be bounded via a matrix-recursion that involves the substitution matrix.

\begin{prop} \label{PROP:upper-iteration}
Let $\vartheta_{\mathbf{P}}$ be a primitive random substitution. Then, for every $n,k \in \N$, we have that
\[
\mathbf{H}_{n+k}^{\top} \leq \mathbf{H}_n^{\top} M^k + \mathbf{H}_k^{\top},
\]
to be understood elementwise. In particular,
\[
\mathbf{H}_{n+k}^{\top} \mathbf{R} \leq \lambda^k \mathbf{H}_n^{\top} \mathbf{R} + \mathbf{H}_k^{\top} \mathbf{R}.
\]
If $\vartheta_{\mathbf{P}}$ has unique realisation paths, equality occurs precisely if $\vartheta_{\mathbf{P}}^{n}(a)$ is completely determined by $\vartheta_{\mathbf{P}}^{n+k}(a)$.
\end{prop}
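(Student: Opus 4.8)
The plan is to use the semigroup property $\vartheta_{\mathbf{P}}^{n+k} = \vartheta_{\mathbf{P}}^{n} \circ \vartheta_{\mathbf{P}}^{k}$ and decompose the level-$(n+k)$ inflation process into two stages. Fix $a \in \mathcal{A}$. On a joint probability space I would realise the underlying Markov chain so that the first $k$ steps produce the random word $\mathcal{W} = \vartheta_{\mathbf{P}}^{k}(a)$ and the subsequent $n$ steps produce $\mathcal{Z} = \vartheta_{\mathbf{P}}^{n+k}(a)$ by applying $\vartheta_{\mathbf{P}}^{n}$ to $\mathcal{W}$. By the definition of $\mathbf{H}_{k}$ and $\mathbf{H}_{n+k}$ we then have $H_{\mathbb{P}}(\mathcal{W}) = H_{k,a}$ and $H_{\mathbb{P}}(\mathcal{Z}) = H_{n+k,a}$, and the whole statement reduces to estimating $H_{\mathbb{P}}(\mathcal{Z})$ in terms of these two stages.

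First I would apply monotonicity and the chain rule for entropy to write
\[
H_{n+k,a} = H_{\mathbb{P}}(\mathcal{Z}) \leq H_{\mathbb{P}}(\mathcal{Z}, \mathcal{W}) = H_{\mathbb{P}}(\mathcal{W}) + H_{\mathbb{P}}(\mathcal{Z} \mid \mathcal{W}),
\]
and then estimate the conditional term. Conditioned on $\{\mathcal{W} = w\}$ with $w = w_{1} \cdots w_{\lvert w \rvert}$, the word $\mathcal{Z}$ is the concatenation $\vartheta_{\mathbf{P}}^{n}(w_{1}) \cdots \vartheta_{\mathbf{P}}^{n}(w_{\lvert w \rvert})$ of the letterwise images, which are independent since $\vartheta_{\mathbf{P}}$ acts on each letter independently. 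Hence the entropy of the concatenated word is at most that of the tuple of pieces, giving $H_{\mathbb{P}}(\mathcal{Z} \mid \mathcal{W} = w) \leq \sum_{i} H_{\mathbb{P}}(\vartheta_{\mathbf{P}}^{n}(w_{i})) = \mathbf{H}_{n}^{\top}\Phi(w)$. Averaging over $w$ according to the law of $\mathcal{W}$ and using $\mathbb{E}[\Phi(\vartheta_{\mathbf{P}}^{k}(a))]_{b} = \mathbb{E}[\lvert \vartheta_{\mathbf{P}}^{k}(a)\rvert_{b}] = (M^{k})_{b,a}$ (recall $M^{k} = M_{\vartheta_{\mathbf{P}}^{k}}$) yields $H_{\mathbb{P}}(\mathcal{Z} \mid \mathcal{W}) \leq (\mathbf{H}_{n}^{\top} M^{k})_{a}$. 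Combining the two estimates gives the elementwise inequality $\mathbf{H}_{n+k}^{\top} \leq \mathbf{H}_{n}^{\top} M^{k} + \mathbf{H}_{k}^{\top}$. The scalar inequality then follows by pairing both row vectors with the strictly positive column vector $\mathbf{R}$, which preserves the inequality, and simplifying $\mathbf{H}_{n}^{\top} M^{k} \mathbf{R} = \lambda^{k} \mathbf{H}_{n}^{\top} \mathbf{R}$ via $M^{k}\mathbf{R} = \lambda^{k}\mathbf{R}$.

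For the equality clause under unique realisation paths I would analyse the two inequalities separately. The conditional estimate $H_{\mathbb{P}}(\mathcal{Z} \mid \mathcal{W} = w) \leq \mathbf{H}_{n}^{\top}\Phi(w)$ is tight exactly when the concatenation $\vartheta_{\mathbf{P}}^{n}(w_{1}) \cdots \vartheta_{\mathbf{P}}^{n}(w_{\lvert w\rvert})$ determines the tuple $(\vartheta_{\mathbf{P}}^{n}(w_{1}), \ldots, \vartheta_{\mathbf{P}}^{n}(w_{\lvert w\rvert}))$, i.e.\ exactly when the concatenation map is injective; this is precisely the hypothesis of unique realisation paths, so under that assumption this inequality is automatically an equality for every $w$. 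Consequently the overall inequality is an equality if and only if the monotonicity step $H_{\mathbb{P}}(\mathcal{Z}) \leq H_{\mathbb{P}}(\mathcal{Z}, \mathcal{W})$ is tight, which by the equality case of monotonicity happens precisely when $\mathcal{Z}$ determines $\mathcal{W}$, that is, when $\vartheta_{\mathbf{P}}^{n+k}(a)$ completely determines the intermediate inflation word $\vartheta_{\mathbf{P}}^{k}(a)$.

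The routine part is the entropy chain-rule computation; the delicate part, and the main obstacle, is the equality bookkeeping. One must check that conditioning on $\mathcal{W}$ really does render the letterwise images $\vartheta_{\mathbf{P}}^{n}(w_{i})$ independent with the correct marginal laws, and then verify that unique realisation paths is exactly the condition collapsing the conditional inequality to an equality, uniformly in $w$, so that it can be cleanly factored out and the remaining equality case attributed solely to the monotonicity step. With these two points in place, the characterisation of equality in terms of recoverability of $\vartheta_{\mathbf{P}}^{k}(a)$ from $\vartheta_{\mathbf{P}}^{n+k}(a)$ follows immediately.
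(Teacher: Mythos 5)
Your proof is correct and follows essentially the same route as the paper's: the chain rule through the intermediate word $\vartheta_{\mathbf{P}}^{k}(a)$, the bound $H_{\mathbb{P}}(\vartheta_{\mathbf{P}}^{n}(w)) \leq \mathbf{H}_{n}^{\top}\Phi(w)$ via independence of the letterwise images (an equality under unique realisation paths), and averaging against the law of $\vartheta_{\mathbf{P}}^{k}(a)$ to produce $\mathbf{H}_{n}^{\top}M^{k}e_{a}$. Note that your equality condition --- $\vartheta_{\mathbf{P}}^{k}(a)$ being determined by $\vartheta_{\mathbf{P}}^{n+k}(a)$ --- matches the paper's own proof body, which also conditions on the level-$k$ word; the exponent $n$ appearing in the proposition's statement of the equality clause seems to be an index slip in the paper rather than a flaw in your argument.
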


\begin{proof}
First, let $v \in \mathcal L_{\vartheta}^m$ and note that the random variable $\vartheta_{\mathbf{P}}^n(v)$ can be written as a function of $(\vartheta_{\mathbf{P}}^n(v_1), \cdots, \vartheta_{\mathbf{P}}^n(v_m))$. Due to the independence of the random variables in the last tuple, we obtain that
\[
H_{\mathbb{P}} (\vartheta_{\mathbf{P}}^n(v)) \leq H_{\mathbb{P}} \bigl(\vartheta_{\mathbf{P}}^n(v_1), \ldots, \vartheta_{\mathbf{P}}^n(v_m) \bigr) 
= \sum_{i = 1}^m H_{\mathbb{P}}(\vartheta_{\mathbf{P}}^n(v_i)) = \mathbf{H}_n^{\top} \Phi(v).
\]
If $\vartheta_{\mathbf{P}}$ has unique realisation paths, we even obtain equality.
Using the Markov property of the substitution process in the first step, we get for every $a \in \mathcal A$,
\begin{align*}
H_{\mathbb{P}}(\vartheta_{\mathbf{P}}^{n+k}(a)| \vartheta_{\mathbf{P}}^k(a))
& = \sum_{v \in \vartheta_{\mathbf{P}}^k(a)} \mathbb{P}[\vartheta_{\mathbf{P}}^k(a) = v] H_{\mathbb{P}} (\vartheta_{\mathbf{P}}^n(v)) 
\leq \mathbf{H}_n^{\top} \sum_{v \in \vartheta_{\mathbf{P}}^k(a)} \mathbb{P}[\vartheta_{\mathbf{P}}^k(a) = v] \Phi(v)
\\&  = \mathbf{H}_n^{\top} \mathbb{E} [\Phi(\vartheta_{\mathbf{P}}^k(a))] = \mathbf{H}_n^{\top} M^k e_a,
\end{align*}
again with equality in case of unique realisation paths.
Therefore, for all $a \in \mathcal A$,
\[
H_{\mathbb{P}}(\vartheta_{\mathbf{P}}^{n+k}(a)) 
\leq H_{\mathbb{P}}(\vartheta_{\mathbf{P}}^{n+k}(a)|\vartheta_{\mathbf{P}}^k(a)) + H_{\mathbb{P}}(\vartheta_{\mathbf{P}}^k(a))
\leq \mathbf{H}_n^{\top} M^k e_a + H_{k,a}.
\]
The first inequality is an equality precisely if $\vartheta_{\mathbf{P}}^k(a)$ is completely determined by $\vartheta_{\mathbf{P}}^{n+k}(a)$ and the second inequality is an equality, provided that $\vartheta_{\mathbf{P}}$ has unique realisation paths.
\end{proof}

\begin{corollary}
\label{COR:upper-bound-recursion}
Let $\vartheta_{\mathbf{P}}$ be a primitive random substitution. Then, for all $n \in \N$,
\[
\frac{1}{\lambda^n - 1} \mathbf{H}_n^{\top} \mathbf{R} \leq \frac{1}{\lambda - 1} \mathbf{H}_1^{\top} \mathbf{R}.
\]
If $\vartheta_{\mathbf{P}}$ has unique realisation paths, we have equality for all $n \in \N$ if and only if $\vartheta_{\mathbf{P}}$ satisfies the disjoint set condition.
\end{corollary}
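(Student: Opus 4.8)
The plan is to reduce everything to the scalar recursion supplied by \Cref{PROP:upper-iteration}. Writing $g_m := \mathbf{H}_m^{\top} \mathbf{R}$ for brevity, the case $k=1$ of that proposition reads $g_{n+1} \le \lambda g_n + g_1$. First I would establish the displayed inequality by proving, by induction on $n$, that
\[
g_n \le g_1(1 + \lambda + \cdots + \lambda^{n-1}) = g_1 \frac{\lambda^n - 1}{\lambda - 1};
\]
the base case $n=1$ is trivial and the inductive step is immediate from $g_{n+1} \le \lambda g_n + g_1$. Dividing by $\lambda^n - 1 > 0$ yields $\frac{1}{\lambda^n-1} g_n \le \frac{1}{\lambda-1} g_1$, which is the asserted bound.

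For the equality statement I work under the unique realisation paths assumption. Since all the $g_m$ are nonnegative, the telescoped bound for a fixed $n$ is an equality precisely when every intermediate step is tight, that is $g_j = \lambda g_{j-1} + g_1$ for $2 \le j \le n$; hence equality holds for \emph{all} $n$ if and only if $g_j = \lambda g_{j-1} + g_1$ for every $j \ge 2$. Invoking \Cref{PROP:upper-iteration} with indices $(n,k) = (j-1,1)$, and using that $\mathbf{R}$ has strictly positive entries so that the scalar equality forces the elementwise equality for each $a \in \mathcal A$, this is equivalent to the condition
\[
(\ast)\qquad \vartheta_{\mathbf{P}}^{i}(a) \text{ is completely determined by } \vartheta_{\mathbf{P}}^{i+1}(a) \text{ for all } i \ge 1,\ a \in \mathcal A.
\]
It then remains to identify $(\ast)$ with the disjoint set condition. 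The implication $(\ast) \Rightarrow$ DSC is the easy direction: composing the determinations in $(\ast)$ down the tower $\vartheta_{\mathbf{P}}^{1}(a) \leftarrow \vartheta_{\mathbf{P}}^{2}(a) \leftarrow \cdots \leftarrow \vartheta_{\mathbf{P}}^{n}(a)$ shows that $\vartheta_{\mathbf{P}}(a)$ is determined by $\vartheta_{\mathbf{P}}^{n}(a)$ for every $n$, which by \Cref{REM:DSC-ISC-reformulation} is exactly the disjoint set condition. For DSC $\Rightarrow (\ast)$ I would prove, by induction on $i$, the equivalent statement that distinct $u \ne v \in \vartheta^{i}(a)$ satisfy $\vartheta(u) \cap \vartheta(v) = \varnothing$. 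The base case $i=1$ is DSC with $k=1$. For the inductive step, split according to the level-one ancestors $c \in \vartheta(a)$ of $u$ and $c' \in \vartheta(a)$ of $v$: if $c \ne c'$, then DSC with $k = i$ gives $\vartheta^{i}(c) \cap \vartheta^{i}(c') = \varnothing$, and since $\vartheta(u) \subseteq \vartheta^{i}(c)$ and $\vartheta(v) \subseteq \vartheta^{i}(c')$ we are done; if $c = c' = e_1 \cdots e_r$, I use unique realisation paths to decompose any putative common word $w \in \vartheta(u) \cap \vartheta(v) \subseteq \vartheta^{i}(c)$ uniquely along the blocks $\vartheta^{i}(e_s)$, which forces the corresponding sub-realisations $u^{(s_0)} \ne v^{(s_0)} \in \vartheta^{i-1}(e_{s_0})$ to collide in some block, contradicting the inductive hypothesis at level $i-1$.

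I expect the last step, DSC $\Rightarrow (\ast)$, to be the main obstacle. The delicate point is the level-matching: a hypothetical common word $w$ lives simultaneously in the one-step image $\vartheta(u)$ of a level-$i$ word and in the $i$-step image $\vartheta^{i}(c)$ of a level-one word, and one must align these two different hierarchical decompositions of the same $w$. This is exactly where unique realisation paths is indispensable—it guarantees that the block decomposition of $w$ is forced, so a collision cannot be manufactured by rebracketing and must instead propagate down to a genuine collision of distinct descendants one level lower. A minor but necessary technical point is the passage from the scalar identity $g_j = \lambda g_{j-1} + g_1$ to the per-letter determination condition, which relies on the strict positivity of the right Perron--Frobenius eigenvector $\mathbf{R}$.
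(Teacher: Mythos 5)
Your proof is correct, but in the equality analysis you take a genuinely different (and longer) route than the paper. The difference comes from which instance of \Cref{PROP:upper-iteration} you iterate. You use $(n,k)=(j-1,1)$, i.e.\ $g_{j}\le \lambda g_{j-1}+g_1$, whose equality condition is the \emph{one-step} determination ``$\vartheta_{\mathbf{P}}^{j-1}(a)$ is determined by $\vartheta_{\mathbf{P}}^{j}(a)$''; this forces you to prove the extra combinatorial equivalence between your condition $(\ast)$ and the disjoint set condition, and the direction DSC $\Rightarrow(\ast)$ requires the block-decomposition induction with unique realisation paths that you correctly identify as the crux. The paper instead iterates with $(n,k)=(1,n-1)$, i.e.\ $\mathbf{H}_n^{\top}\mathbf{R}\le \lambda^{n-1}\mathbf{H}_1^{\top}\mathbf{R}+\mathbf{H}_{n-1}^{\top}\mathbf{R}$; telescoping this gives the same bound, but the equality condition at each step is now directly ``$\vartheta_{\mathbf{P}}(a)$ is completely determined by $\vartheta_{\mathbf{P}}^{n}(a)$'', which is verbatim the reformulation of the disjoint set condition in \Cref{REM:DSC-ISC-reformulation}, so no further argument is needed. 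Your auxiliary lemma and its proof sketch (uniqueness of the inflation-word decomposition of a putative common image $w\in\vartheta^i(e_1)\times\cdots\times\vartheta^i(e_r)$ forcing a collision one level down) are sound, and the passage from scalar to elementwise equality via positivity of $\mathbf{R}$ is handled correctly; the only cost of your approach is that it proves an equivalence the paper's choice of telescoping renders unnecessary.
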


\begin{proof}
Given $n \geq 2$, iterating the relation $\mathbf{H}_n^{\top} \mathbf{R} \leq \lambda^{n-1} \mathbf{H}_1^{\top} \mathbf{R} + \mathbf{H}_{n-1}^{\top} \mathbf{R}$ yields
\[
\mathbf{H}_n^{\top} \mathbf{R} \leq \mathbf{H}_1^{\top} \mathbf{R} \sum_{k = 0}^{n-1} \lambda^k = 
\frac{\lambda^n -1}{\lambda - 1} \mathbf{H}_1^{\top} \mathbf{R},
\]
immediately giving the required inequality. Given the property of unique realisation paths, equality holds if and only if $\vartheta_{\mathbf{P}}^n(a)$ completely determines $\vartheta_{\mathbf{P}}(a)$ for all $a \in \mathcal A$ and $n \in \N$. This is just a reformulation of the disjoint set condition; compare \Cref{REM:DSC-ISC-reformulation}.
\end{proof}

\subsection{The lower bound}

In this section, we will establish the lower bounds for the measure theoretic entropy in \Cref{THM:main} and \Cref{THM:main-urp}. Again, our proof relies heavily on the self-consistency relation for $\mu_{\mathbf{P}}$ presented in \Cref{SUBSEC:renormalisation}.

\begin{prop}
\label{PROP:lower-bound}
Let $\vartheta_{\mathbf{P}}$ be a primitive random substitution with associated measure $\mu_{\mathbf{P}}$. Then,
\[
h(\mu_{\mathbf{P}}) \geq \frac{1}{\lambda^k} \mathbf{H}_k^{\top} \mathbf{R} - H(\lambda^{-k}),
\]
for all $k \in \N$. If $\vartheta_{\mathbf{P}}$ has unique realisation paths, it is
\[
h(\mu_{\mathbf{P}}) \geq \frac{1}{\lambda^k} \mathbf{H}_k^{\top} \mathbf{R},
\]
for all $k \in \N$.
\end{prop}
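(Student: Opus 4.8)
The plan is to mirror the proof of \Cref{PROP:upper-bound}, but to run the entropy comparison in the opposite direction. As there, it suffices to treat $k = 1$: applying the resulting bound to the primitive random substitution $\vartheta_{\mathbf{P}}^k$ (whose Perron--Frobenius eigenvalue is $\lambda^k$, whose associated entropy vector is $\mathbf{H}_k$, which has the same subshift and frequency measure, and which inherits unique realisation paths) yields the general statement. So I aim to show $h(\mu_{\mathbf{P}}) \ge \lambda^{-1}\mathbf{H}_1^{\top}\mathbf{R} - H(\lambda^{-1})$, with the counterterm removed under unique realisation paths. I work on $(\Omega_n, P_n)$ with the random variables $\mathcal{V}_{[1,m]}, \mathcal{U}_{[2,m]}, \mathcal{U}_1, \mathcal{J}$ of \Cref{PROP:upper-bound}, and use \Cref{LEM:entropy-by-Wn} to write $h(\mu_{\mathbf{P}}) = \lim_n \tfrac1n H_{P_n}(\mathcal{W}_n)$. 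Fixing a small $\varepsilon > 0$, I set $\lambda_{\pm} = \lambda \pm \varepsilon$ and choose $m = m(n)$ with $m/n \to \lambda_+^{-1}$, small enough that $\lambda_+ m \le n - |\vartheta|$. By \Cref{LEM:A_n-convergence}, the event $A_n = \{\lambda_- m \le |u_2\cdots u_m| \le \lambda_+ m\}$ satisfies $P_n(A_n) \to 1$, and on $A_n$ the window $\mathcal{W}_n = (u_1\cdots u_n)_{[j,j+n-1]}$ contains the entire block $u_2\cdots u_m$, since $j \le |u_1|$ controls the left boundary and $\lambda_+ m \le n - |\vartheta|$ the right one.

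The decisive point is that, on $A_n$, the block $u_2 \cdots u_m$ sits inside $\mathcal{W}_n$ and can be decoded once a bounded amount of boundary data is supplied, so that $\mathcal{U}_{[2,m]}$ becomes a function of $\mathcal{W}_n$ and auxiliary information. In the general case I would take the auxiliary datum to be the random set $\mathcal{C}$ of inflation-word cut points falling inside the window: the segments of $\mathcal{W}_n$ delimited by $\mathcal{C}$, together with the $O(1)$-entropy pair $(\mathcal{U}_1,\mathcal{J})$ that locates the end of $u_1$, recover $u_2,\ldots,u_m$ on $A_n$. A routine Fano-type estimate, using $H_{P_n}(\mathcal{U}_{[2,m]} \mid \mathcal{W}_n, \mathcal{C}, \mathcal{U}_1, \mathcal{J}) \le P_n(A_n^C)\,O(n) + O(1) = o(n)$ together with $H_{P_n}(\mathcal{U}_1,\mathcal{J}) = O(1)$, then gives
\[
H_{P_n}(\mathcal{U}_{[2,m]}) \le H_{P_n}(\mathcal{W}_n) + H_{P_n}(\mathcal{C}) + o(n).
\]
The cut-point term is where the counterterm enters: the number of cuts concentrates at $n/\lambda$ (by the same large-deviation control as in \Cref{LEM:A_n-convergence}), so Stirling's asymptotics $\tfrac1n\log\binom{n}{\alpha n}\to H(\alpha)$ yield $\tfrac1n H_{P_n}(\mathcal{C}) \le \tfrac1n\mathbb{E}\!\left[\log\binom{n}{\#\mathcal{C}}\right] + o(1) \to H(\lambda^{-1})$. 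Under unique realisation paths I avoid $\mathcal{C}$ altogether: conditioning on $\mathcal{V}_{[2,m]}$, the injectivity of the concatenation map recovers $(u_2,\ldots,u_m)$ from the word $u_2\cdots u_m$, so on $A_n$ the variable $\mathcal{U}_{[2,m]}$ is a function of $(\mathcal{W}_n, \mathcal{V}_{[2,m]}, \mathcal{U}_1, \mathcal{J})$, and the same Fano estimate gives $H_{P_n}(\mathcal{U}_{[2,m]} \mid \mathcal{V}_{[1,m]}) = H_{P_n}(\mathcal{U}_{[2,m]} \mid \mathcal{V}_{[1,m]}, \mathcal{U}_1, \mathcal{J}) \le H_{P_n}(\mathcal{W}_n) + o(n)$, with no $H(\lambda^{-1})$ cost.

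It remains to identify the left-hand rate, which is exactly the quantity computed for $H_{P_n}(\mathcal{U}_{[2,m]} \mid \mathcal{V}_{[1,m]})$ in the proof of \Cref{PROP:upper-bound}: using $P_{n,v_{[1,m]}}[\mathcal{U}_{[2,m]} = (u_2,\ldots,u_m)] = \prod_{i=2}^m \mathbb{P}[\vartheta_{\mathbf{P}}(v_i) = u_i]$ and the ergodic convergence $\Phi(v_{[2,m]})/m \to \mathbf{R}$, one obtains
\[
\lim_{n\to\infty} \tfrac1n H_{P_n}(\mathcal{U}_{[2,m]} \mid \mathcal{V}_{[1,m]}) = \tfrac{1}{\lambda_+}\mathbf{H}_1^{\top}\mathbf{R}.
\]
Combining this with $H_{P_n}(\mathcal{U}_{[2,m]}) \ge H_{P_n}(\mathcal{U}_{[2,m]} \mid \mathcal{V}_{[1,m]})$ and the displayed inequalities, dividing by $n$ and passing to the limit, gives $\tfrac{1}{\lambda_+}\mathbf{H}_1^{\top}\mathbf{R} \le h(\mu_{\mathbf{P}}) + H(\lambda^{-1})$ in general and $\tfrac{1}{\lambda_+}\mathbf{H}_1^{\top}\mathbf{R} \le h(\mu_{\mathbf{P}})$ under unique realisation paths; letting $\varepsilon \to 0$ so that $\lambda_+ \to \lambda$ then yields both bounds. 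The hardest part will be the rigorous control of $H_{P_n}(\mathcal{C})$: one must fuse the Stirling asymptotics with a large-deviation argument showing that the number of inflation-word boundaries in a length-$n$ window concentrates tightly enough around $n/\lambda$ that its entropy does not exceed $H(\lambda^{-1}) + o(1)$. The secondary nuisance is bookkeeping the bad-event term so that $P_n(A_n^C)\,O(n)$ is genuinely $o(n)$, which only needs $P_n(A_n^C) \to 0$ from \Cref{LEM:A_n-convergence}.
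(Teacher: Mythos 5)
Your argument is essentially the paper's own proof: the same reduction to $k=1$, the same identification $\lim_n \frac{1}{n}H_{P_n}(\mathcal U_{[2,m]}\mid\mathcal V_{[1,m]}) = \lambda_+^{-1}\mathbf H_1^{\top}\mathbf R$, and the same source for the counterterm -- except that the paper encodes your cut-point set $\mathcal C$ as the length tuple $(|u_2|,\ldots,|u_m|)$ conditioned on its sum $\ell_m$, bounding its entropy by $\log\binom{\ell-1}{m-2}\le n\,H((m-2)/n)$, which is cleaner than your $\log\binom{n}{\#\mathcal C}$ because the number of encoded lengths is then deterministically $m-2\approx n/\lambda_+$ and the concentration argument you flag as the ``hardest part'' is not needed at all (large deviations enter only through $P_n(A_n)\to 1$). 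The one genuine imprecision is in the unique-realisation-paths case: $(\mathcal W_n,\mathcal V_{[1,m]},\mathcal U_1,\mathcal J)$ does \emph{not} determine $\mathcal U_{[2,m]}$ on $A_n$, since injectivity of the concatenation map does not prevent one admissible concatenation from being a proper prefix of another (e.g.\ $\vartheta(a)=\{c,cb\}$ admits both $cc$ and $ccb$ as prefixes of the observed suffix); you must additionally condition on $\ell_m$ to locate the right endpoint of $u_2\cdots u_m$, as the paper does, which costs only $O(\log n)$ entropy and so leaves the conclusion intact.
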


\begin{proof}
Again, it suffices to consider the case $k=1$.
We take over the notation from the proof of Proposition~\ref{PROP:upper-bound} with one modification. For $\varepsilon > 0$, we now consider $\lambda_+ = \lambda + \varepsilon$ and set
\[
m = m_-(n) = \Bigl\lceil\frac{n}{\lambda_+}\Bigr\rceil.
\]
This is to ensure that $\mathcal W_n$ and $\mathcal J$ determine $\mathcal U_2 \cdots \mathcal U_m$ on a set of large probability, given by
\[
B_n = \{ (v, u_1, \ldots, u_n,j) : |u_2 \cdots u_m| \leq n - |\vartheta|  \}.
\] 
Using standard properties of conditional entropy, we get
\begin{equation}
\label{EQ:lower-with-counter}
H_{P_n}(\mathcal W_n) \geq H_{P_n} (\mathcal W_n \, | \, \mathcal V_{[1,m]}) 
\geq H_{P_n} ( \mathcal U_{[2,m]} \, | \, \mathcal V_{[1,m]} ) - H_{P_n} (\mathcal U_{[2,m]} \, | \, \mathcal W_n).
\end{equation}
Just like in the proof of Proposition~\ref{PROP:upper-bound} it follows that
\[
\lim_{n \to \infty} \frac{1}{n} H_{P_n} ( \mathcal U_{[2,m]} \, | \, \mathcal V_{[1,m]} ) = \frac{1}{\lambda_+} \mathbf{H}_1^{\top} \mathbf{R}.
\]
It remains to find an adequate upper bound for $H_{P_n} (\mathcal U_{[2,m]} \, | \, \mathcal W_n)$. To that end, we introduce an additional random variable on $\Omega_n$ via
\[
\ell_m \colon (v, u_1, \ldots, u_n, j) \mapsto |u_2 \cdots u_m|.
\]
Next, we obtain
\begin{equation}
\label{EQ:H-relation-I}
\begin{split}
H_{P_n} (\mathcal U_{[2,m]} \, | \, \mathcal W_n)
& \leq H_{P_n} (\mathcal U_{[2,m]} \, | \, \mathcal W_n, \mathcal J, \ell_m) + H_{P_n} (\mathcal J, \ell_m \, | \mathcal W_n)
\\ &= H_{P_n} (\mathcal U_{[2,m]} \, | \, \mathcal W_n, \mathcal J, \ell_m) + O(\log(m)).
\end{split}
\end{equation}
The last step follows because the number of distinct realisations of $(\mathcal J, \ell_m)$ can be bounded from above by $|\vartheta|^2 m$. Conditioned on $\mathcal W_n, \mathcal J, \ell_m$, and provided $\ell_m \leq n- |\vartheta|$, knowledge of $\mathcal U_{[2,m]}$ is equivalent to knowledge of 
\[
|\mathcal U|_{[2,m]} \colon (v, u_1, \ldots, u_n,j) \mapsto (|u_2|, \ldots, |u_m|).
\]
Indeed, on the set $B_n$ (that is, if $\ell_m \leq n -|\vartheta|$) we observe that $\mathcal W_n, \mathcal J$ and $\ell_m$ determine the word $u_2 \cdots u_m$, such that knowing the lengths of the individual words allows us to infer $(u_2, \ldots, u_m)$. By conditioning,
\[
H_{P_n} (\mathcal U_{[2,m]} \, | \, \mathcal W_n, \mathcal J, \ell_m)  \leq H_{P_n} (|\mathcal U|_{[2,m]} \, | \, \mathcal W_n, \mathcal J, \ell_m) 
+ H_{P_n} (\mathcal U_{[2,m]} \, | \, |\mathcal U|_{[2,m]}, \mathcal W_n, \mathcal J, \ell_m).
\]
Let $M = \max_{a \in \mathcal A} \# \vartheta(a)$, implying $\# \sigma(\mathcal U_{[2,m]}) \leq M^m$. By the observations above, we can bound 
\[
H_{P_n} (\mathcal U_{[2,m]} \, | \, |\mathcal U|_{[2,m]}, \mathcal W_n, \mathcal J, \ell_m) \leq P_n(B_n^C) \, m \log(M).
\]
Since $P(B_n^C) \to 0$ as $n \to \infty$ by Lemma~\ref{LEM:A_n-convergence}, it follows that
\begin{equation}
\label{EQ:H-relation-II}
H_{P_n} (\mathcal U_{[2,m]} \, | \, \mathcal W_n, \mathcal J, \ell_m) \leq H_{P_n} (|\mathcal U|_{[2,m]} \, | \, \ell_m) 
+ o(n).
\end{equation}
If $\vartheta_{\mathbf{P}}$ has unique realisation paths, we even get that $\mathcal W_n, \mathcal J, \ell_m$ determine $\mathcal U_{[2,m]}$ completely on $B_n$, yielding
\[
H_{P_n} (\mathcal U_{[2,m]} \, | \, \mathcal W_n, \mathcal J, \ell_m) = o(n),
\]
by an analogous argument.
Given $\ell_m = \ell$, the number of possible values of $|U|_{[2,m]}$ is bounded above by the number of choices to decompose a block of length $\ell$ into $m-1$ smaller blocks, that is, by the binomial coefficient $\binom{\ell -1}{m -2}$. Using this bound on $B_n$ and the fixed bound $M^m$ on $B_n^C$, we obtain
\begin{align*}
H_{P_n} (|\mathcal U|_{[2,m]} \, | \, \ell_m) 
& \leq \sum_{\ell = m-1}^{n -|\vartheta|} P_n[\ell_m = \ell] \log {\binom{\ell -1}{m -2}} + P_n(B_n^C) \, m \log(M)
\\ & \leq \log { \binom{n}{m-2}} + o(n)
\leq n\, H ((m-2)/n) + o(n).
\end{align*}
Since we have seen in \eqref{EQ:H-relation-I} and \eqref{EQ:H-relation-II} that $H_{P_n} (|\mathcal U|_{[2,m]} \, | \, \ell_m)$ bounds $H_{P_n}(\mathcal U_{[2,m]} \, | \, \mathcal W_n)$ up to a term of order $o(n)$, we get from  \eqref{EQ:lower-with-counter} that
\begin{align*}
h(\mu_{\mathbf{P}}) 
& = \lim_{n \to \infty} \frac{1}{n} H_{P_n}(\mathcal W_n) \geq 
\lim_{n \to \infty} \frac{1}{n} H_{P_n}(\mathcal U_{[2,m]} \, | \, \mathcal V_{[1,m]}) - \limsup_{n \to \infty} H_{P_n}(\mathcal U_{[2,m]} \, | \, \mathcal W_n) 
\\& \geq \frac{1}{\lambda_+} \mathbf{H}_1^{\top} \mathbf{R} - H(\lambda_+^{-1}) 
\xrightarrow{ \varepsilon \to 0} \frac{1}{\lambda} \mathbf{H}_1^{\top} \mathbf{R} - H(\lambda^{-1}).
\end{align*}
If $\vartheta_{\mathbf{P}}$ has unique realisation paths, we have $H_{P_n}(\mathcal U_{[2,m]} \, | \, \mathcal W_n) = o(n)$, which gives the stronger bound
\[
h(\mu_{\mathbf{P}}) \geq \frac{1}{\lambda} H^1 R,
\]
in this case.
\end{proof}

For the remainder of this section, we restrict to the case of unique realisation paths.

\begin{prop} \label{PROP:lower-iteration}
Let $\vartheta_{\mathbf{P}}$ be a primitive random substitution with unique realisation paths. Then, 
\[
\mathbf{H}_{n+k}^{\top} \geq \mathbf{H}_n^{\top} M^k
\]
for all $n,k \in \N$. Equality holds if and only if $\vartheta_{\mathbf{P}}^{n+k}(a)$ is independent of $\vartheta_{\mathbf{P}}^n(a)$ for all $a \in \mathcal A$.
\end{prop}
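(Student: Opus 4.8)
The plan is to read this as the lower-bound mirror of \Cref{PROP:upper-iteration}: the very same Markov-property computation appears, but now I reverse the single entropy inequality that was used there and exploit the fact that, under unique realisation paths, that computation becomes an \emph{exact} identity. First I would fix $a \in \mathcal A$ and analyse the random word $\vartheta^{n+k}_{\mathbf{P}}(a)$ by conditioning on the intermediate state $\vartheta^{k}_{\mathbf{P}}(a)$ of the substitution Markov chain. Because conditioning never increases entropy (item (4) of the entropy lemma, in the form $H_{\mathbb{P}}(\mc U) \geq H_{\mathbb{P}}(\mc U \mid \mc V)$), I immediately get
\[
H_{\mathbb{P}}(\vartheta^{n+k}_{\mathbf{P}}(a)) \geq H_{\mathbb{P}}(\vartheta^{n+k}_{\mathbf{P}}(a) \mid \vartheta^{k}_{\mathbf{P}}(a)).
\]

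The next step is to evaluate the right-hand side exactly, reusing the calculation already carried out in \Cref{PROP:upper-iteration}. By the Markov property of the substitution process,
\[
H_{\mathbb{P}}(\vartheta^{n+k}_{\mathbf{P}}(a) \mid \vartheta^{k}_{\mathbf{P}}(a)) = \sum_{v \in \vartheta^{k}(a)} \mathbb{P}[\vartheta^{k}_{\mathbf{P}}(a) = v]\, H_{\mathbb{P}}(\vartheta^{n}_{\mathbf{P}}(v)).
\]
Here is where unique realisation paths enters: since the concatenation map is injective, $\vartheta^{n}_{\mathbf{P}}(v)$ determines the tuple $(\vartheta^{n}_{\mathbf{P}}(v_1),\ldots,\vartheta^{n}_{\mathbf{P}}(v_{|v|}))$, so the bound $H_{\mathbb{P}}(\vartheta^{n}_{\mathbf{P}}(v)) \leq \mathbf{H}_n^{\top}\Phi(v)$ from \Cref{PROP:upper-iteration} upgrades to the equality $H_{\mathbb{P}}(\vartheta^{n}_{\mathbf{P}}(v)) = \mathbf{H}_n^{\top}\Phi(v)$. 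Substituting this and using $\mathbb{E}[\Phi(\vartheta^{k}_{\mathbf{P}}(a))] = M^{k} e_a$ (which is $M_{\vartheta^k} = M^k$ read off componentwise) gives
\[
H_{\mathbb{P}}(\vartheta^{n+k}_{\mathbf{P}}(a) \mid \vartheta^{k}_{\mathbf{P}}(a)) = \mathbf{H}_n^{\top}\,\mathbb{E}[\Phi(\vartheta^{k}_{\mathbf{P}}(a))] = \mathbf{H}_n^{\top} M^{k} e_a.
\]
Combining the two displays yields $H_{n+k,a} \geq \mathbf{H}_n^{\top} M^{k} e_a$ for every $a \in \mathcal A$, which is exactly the asserted elementwise inequality $\mathbf{H}_{n+k}^{\top} \geq \mathbf{H}_n^{\top} M^{k}$.

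For the equality clause I would observe that the displayed chain contains a single inequality, the conditioning step with $\mc U = \vartheta^{n+k}_{\mathbf{P}}(a)$ and $\mc V = \vartheta^{k}_{\mathbf{P}}(a)$; every subsequent step is an identity by unique realisation paths. By the equality case of item (4) of the entropy lemma, this inequality is an equality precisely when $\mc U$ and $\mc V$ are independent, so equality in the $a$-th component holds if and only if $\vartheta^{n+k}_{\mathbf{P}}(a)$ is independent of the conditioned state $\vartheta^{k}_{\mathbf{P}}(a)$, and equality throughout holds if and only if this independence holds for all $a \in \mathcal A$. Taking $k=1$ this recovers independence of $\vartheta^{n+1}_{\mathbf{P}}(a)$ and $\vartheta_{\mathbf{P}}(a)$, which via \Cref{REM:DSC-ISC-reformulation} is precisely the condition attached to the lower bound in \Cref{THM:main-urp}, a useful consistency check.

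I do not expect the inequality to be the hard part, as it is a one-line consequence of conditioning; the real content is turning the right-hand side into an \emph{exact} identity. The main subtlety is thus to invoke unique realisation paths in the one place where it is indispensable — upgrading $H_{\mathbb{P}}(\vartheta^{n}_{\mathbf{P}}(v)) \leq \mathbf{H}_n^{\top}\Phi(v)$ to an equality — since otherwise the conditional entropy would only be bounded \emph{above} by $\mathbf{H}_n^{\top} M^{k} e_a$, an inequality pointing the wrong way that would give no lower bound on $H_{n+k,a}$. A secondary point demanding care is the bookkeeping for the equality case: one must track exactly which intermediate state is conditioned upon, so that the independence characterising equality is stated with respect to the correct random word.
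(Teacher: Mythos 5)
Your proof is correct and is essentially the paper's own argument: condition $\vartheta_{\mathbf{P}}^{n+k}(a)$ on an intermediate state of the substitution Markov chain, use the fact that conditioning does not increase entropy, and invoke unique realisation paths to turn the conditional-entropy computation from \Cref{PROP:upper-iteration} into the exact identity $H_{\mathbb{P}}(\vartheta_{\mathbf{P}}^{n+k}(a)\mid\vartheta_{\mathbf{P}}^{k}(a))=\mathbf{H}_n^{\top}M^{k}e_a$. The only discrepancy is in the equality clause: conditioning on $\vartheta_{\mathbf{P}}^{k}(a)$, as you do, characterises equality by independence of $\vartheta_{\mathbf{P}}^{n+k}(a)$ from $\vartheta_{\mathbf{P}}^{k}(a)$ rather than from $\vartheta_{\mathbf{P}}^{n}(a)$ as stated; since $n$ and $k$ range over all of $\N$ the two formulations carry the same content up to relabelling, your version at $k=1$ is exactly the instance needed in \Cref{COR:lower-bound-recursion}, and the paper's own one-line proof in fact mixes the two conventions (it conditions on $\vartheta_{\mathbf{P}}^{n}(a)$ yet writes the value $\mathbf{H}_n^{\top}M^{k}e_a$ that arises from conditioning on $\vartheta_{\mathbf{P}}^{k}(a)$), so nothing is lost.
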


\begin{proof}
As in the proof of Proposition~\ref{PROP:upper-iteration}, we obtain
\[
H(\vartheta_{\mathbf{P}}^{n+k}(a)) \geq H(\vartheta_{\mathbf{P}}^{n+k} | \vartheta_{\mathbf{P}}^n(a)) = \mathbf{H}_n^{\top} M^k e_a,
\]
for all $a \in \mathcal A$. Equality holds if and only if $\vartheta_{\mathbf{P}}^{n+k}(a)$ and $\vartheta_{\mathbf{P}}^n(a)$ are independent random variables.
\end{proof}

\begin{corollary}
\label{COR:lower-bound-recursion}
Let $\vartheta_{\mathbf{P}}$ be primitive with unique realisation paths. Then, for all $m \leq n$,
\[
\frac{1}{\lambda^m} \mathbf{H}_m^{\top} \mathbf{R} \leq \frac{1}{\lambda^n} \mathbf{H}_n^{\top} \mathbf{R}.
\]
Equality holds for all $m \leq n$ if and only if $\vartheta_{\mathbf{P}}$ satisfies the identical set condition with identical production probabilities. 
\end{corollary}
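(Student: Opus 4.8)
The plan is to read off both assertions directly from \Cref{PROP:lower-iteration}, exploiting that $\mathbf{R}$ is the strictly positive right Perron--Frobenius eigenvector of $M$, so that $M^k \mathbf{R} = \lambda^k \mathbf{R}$.

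For the inequality, fix $m \leq n$ and put $k = n - m \geq 0$. \Cref{PROP:lower-iteration} supplies the elementwise bound $\mathbf{H}_n^{\top} = \mathbf{H}_{m+k}^{\top} \geq \mathbf{H}_m^{\top} M^k$. Since every entry of $\mathbf{R}$ is strictly positive, I may contract this on the right with $\mathbf{R}$ without reversing the inequality, and using $M^k \mathbf{R} = \lambda^k \mathbf{R}$ this gives $\mathbf{H}_n^{\top} \mathbf{R} \geq \lambda^{k} \mathbf{H}_m^{\top} \mathbf{R}$. Dividing by $\lambda^n = \lambda^{m+k}$ yields precisely $\lambda^{-m} \mathbf{H}_m^{\top} \mathbf{R} \leq \lambda^{-n} \mathbf{H}_n^{\top} \mathbf{R}$, so the sequence $s_n := \lambda^{-n} \mathbf{H}_n^{\top} \mathbf{R}$ is non-decreasing.

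For the equality clause, I first observe that because $\mathbf{R}$ is strictly positive, the scalar equality $\mathbf{H}_n^{\top} \mathbf{R} = \mathbf{H}_m^{\top} M^k \mathbf{R}$ holds if and only if the underlying elementwise identity $\mathbf{H}_{m+k}^{\top} = \mathbf{H}_m^{\top} M^k$ holds. Next I reduce ``equality for all $m \leq n$'' to a condition on consecutive indices: as $(s_n)$ is non-decreasing, equality $s_m = s_n$ for all $m \leq n$ is equivalent to $(s_n)$ being constant, which in turn is equivalent to $s_n = s_{n+1}$ for every $n$, that is, to the elementwise identities $\mathbf{H}_{n+1}^{\top} = \mathbf{H}_n^{\top} M$ holding for all $n \in \N$. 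Applying the equality clause of \Cref{PROP:lower-iteration} with $k = 1$, each such identity is equivalent to the independence of $\vartheta_{\mathbf{P}}^{n+1}(a)$ and $\vartheta_{\mathbf{P}}(a)$ for every $a \in \mathcal A$. Ranging over $n$, this is exactly the statement that $\vartheta_{\mathbf{P}}(a)$ and $\vartheta_{\mathbf{P}}^{m}(a)$ are independent for all $m \geq 2$ and $a \in \mathcal A$, which by \Cref{REM:DSC-ISC-reformulation} is precisely the identical set condition with identical production probabilities.

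The inequality is immediate, so essentially all of the work sits in the equality clause. The step requiring the most care is the collapse of ``equality for all pairs $m \leq n$'' to the consecutive identities $\mathbf{H}_{n+1}^{\top} = \mathbf{H}_n^{\top} M$: this depends on having first established monotonicity and on the strict positivity of $\mathbf{R}$ to pass freely between the scalar and the elementwise equalities. I would also be careful to bookkeep the independence produced by \Cref{PROP:lower-iteration} against the correct inflation level, so that the accumulated family of conditions coincides verbatim with the hypothesis of \Cref{REM:DSC-ISC-reformulation}, rather than with a superficially similar but inequivalent family of independence relations.
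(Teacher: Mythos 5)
Your argument is correct and follows essentially the same route as the paper: both derive the monotonicity by contracting the elementwise bound of \Cref{PROP:lower-iteration} against the positive eigenvector $\mathbf{R}$, reduce ``equality for all $m\leq n$'' to constancy of the sequence $\lambda^{-n}\mathbf{H}_n^{\top}\mathbf{R}$, and then translate the resulting independence of $\vartheta_{\mathbf{P}}(a)$ and $\vartheta_{\mathbf{P}}^{n}(a)$ into the identical set condition with identical production probabilities via \Cref{REM:DSC-ISC-reformulation}. Your extra care about the scalar-versus-elementwise equivalence and about which inflation level the independence refers to (matching the proof of \Cref{PROP:lower-iteration} rather than its literal statement) is well placed but does not change the substance.
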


\begin{proof}
By Proposition~\ref{PROP:lower-iteration}, we get
\[
\frac{1}{\lambda^n} \mathbf{H}_n^{\top} \mathbf{R} \geq \frac{1}{\lambda^n} \mathbf{H}_m^{\top} M^{n-m} \mathbf{R} = \frac{1}{\lambda^m} \mathbf{H}_m^{\top} \mathbf{R}.
\]
Equality for all $m \leq n$ holds precisely if 
\[\
\frac{1}{\lambda^n} \mathbf{H}_n^{\top} \mathbf{R} = \frac{1}{\lambda} \mathbf{H}_1^{\top} \mathbf{R},
\]
for all $n \in \N$. This is the case if and only if for all $a \in \mathcal A$, $\vartheta_{\mathbf{P}}(a)$ is independent from $\vartheta_{\mathbf{P}}^n(a)$ for all $n \in \N$, which means that $\vartheta_{\mathbf{P}}^{n-1}(v)$ has the same distribtution for all possible realisations $v$ of $\vartheta_{\mathbf{P}}(a)$. This is precisely the identical set condition with identical production probabilities. 
\end{proof}

With the results established thus far, our main results follow in a straightforward manner.

\begin{proof}[Proof of \Cref{THM:main}]
The fact that $\lambda^{-k} \mathbf{H}_k^{\top} \mathbf{R} - H(\lambda^{-k}) \leq h(\mu_{\mathbf{P}}) \leq (\lambda^k - 1)^{-1} \mathbf{H}_k^{\top} \mathbf{R}$ for all $k \in \N$ follows directly by combining \Cref{PROP:upper-bound} and \Cref{PROP:lower-bound}. The convergence 
of $\lambda^{-k} \mathbf{H}_k^{\top} \mathbf{R}$ as $k \to \infty$ can be seen from the reformulation of this relation in \eqref{EQ:convergence-bounds}.
\end{proof}

\begin{proof}[Proof of \Cref{THM:main-urp}]
The upper and lower bounds for $h(\mu_{\mathbf{P}})$ were established in \Cref{PROP:upper-bound} and \Cref{PROP:lower-bound}. The statements on the equivalent conditions for equality with the lower or upper bound are given in \Cref{COR:upper-bound-recursion} and \Cref{COR:lower-bound-recursion}. The fact that the sequence of lower bounds is non-decreasing is also contained in \Cref{COR:lower-bound-recursion}.
\end{proof}

\section{Measures of maximal entropy}\label{S MME}

\subsection{Existence of frequency measures of maximal entropy}\label{SS MME existence results}

By comparing the results for measure theoretic entropy established in Section \ref{S MT entropy} with the results on topological entropy obtained in \cite{gohlke}, we ascertain that for random substitution subshifts there often exists a frequency measure of maximal entropy. In particular, as a consequence of \Cref{C IS DS topological entropy} and \Cref{THM:main-urp}, we obtain that every subshift of a primitive and compatible random substitution satisfying the identical set condition or disjoint set condition has a frequency measure of maximal entropy. This measure of maximal entropy is the frequency measure corresponding to uniform probabilities.

\begin{theorem}\label{T IS DS MME}
Let $\vartheta_{\mathbf{P}} = (\vartheta, \mathbf{P})$ be a primitive and compatible random substitution satisfying either the disjoint set condition or the identical set condition, with corresponding frequency measure $\mu_{\mathbf{P}}$.  If $\mathbb{P} [\vartheta_{\mathbf{P}} (a) = s] = 1/(\# \vartheta (a))$ for all $a \in \mathcal{A}$ and $s \in \vartheta (a)$, then $\mu_{\mathbf{P}}$ is a measure of maximal entropy for the system $(X_{\vartheta}, S)$.
\end{theorem}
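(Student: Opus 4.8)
The plan is to show that, under the stated hypotheses, $h(\mu_{\mathbf{P}}) = h_{\text{top}}(X_{\vartheta})$; since the topological entropy is the supremum of the measure theoretic entropies over all shift-invariant measures, this immediately gives that $\mu_{\mathbf{P}}$ is a measure of maximal entropy. Two preliminary observations set everything up. First, a primitive compatible random substitution is geometrically compatible and hence has unique realisation paths (by \Cref{FIG:1} and the associated lemma), so \Cref{THM:main-urp} is available. Second, the uniform probability assumption forces $H_{\mathbb{P}}(\vartheta_{\mathbf{P}}(a)) = \log(\#\vartheta(a))$ for every $a \in \mathcal{A}$, because equidistribution is exactly the equality case in the bound $H_{\mathbb{P}}(\vartheta_{\mathbf{P}}(a)) \leq \log(\#\vartheta(a))$. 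In the notation of \Cref{SS top entropy} this says $\mathbf{H}_1 = \mathbf{q}_1$, which is the bridge between the measure theoretic and topological sides.

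For the disjoint set condition this is already enough. The upper bound in \Cref{THM:main-urp} is attained at $k = 1$, so $h(\mu_{\mathbf{P}}) = (\lambda - 1)^{-1}\mathbf{H}_1^{\top}\mathbf{R} = (\lambda - 1)^{-1}\mathbf{q}_1^{\top}\mathbf{R}$, and the right-hand side equals $h_{\text{top}}(X_{\vartheta})$ by the disjoint set case of \Cref{C IS DS topological entropy}. No further work is required.

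For the identical set condition we instead want the lower bound in \Cref{THM:main-urp} to be attained at $k=1$, which by that theorem happens precisely when $\vartheta_{\mathbf{P}}$ satisfies the identical set condition with identical production probabilities. The main step is therefore to verify that the uniform assumption forces identical production probabilities, and I would do this by proving, by induction on $k$, that $\vartheta_{\mathbf{P}}^{k}(a)$ is equidistributed on $\vartheta^{k}(a)$ for every $a \in \mathcal{A}$. The base case $k=1$ is the uniform assumption. For the inductive step, write $u = c_1 \cdots c_\ell \in \vartheta(a)$ in terms of its letters; unique realisation paths guarantee that each $v \in \vartheta^{k-1}(u)$ admits a single decomposition $v = v^{(1)}\cdots v^{(\ell)}$ with $v^{(i)} \in \vartheta^{k-1}(c_i)$, so that by independence and the inductive hypothesis $\mathbb{P}[\vartheta_{\mathbf{P}}^{k-1}(u) = v] = \prod_{i=1}^{\ell} (\#\vartheta^{k-1}(c_i))^{-1}$, a value independent of $v$. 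Hence $\vartheta_{\mathbf{P}}^{k-1}(u)$ is equidistributed with $\#\vartheta^{k-1}(u) = \prod_i \#\vartheta^{k-1}(c_i)$; the identical set condition gives $\vartheta^{k-1}(u) = \vartheta^{k}(a)$ for every $u \in \vartheta(a)$, so this common cardinality equals $\#\vartheta^{k}(a)$ irrespective of $u$. Averaging over the uniform first step then yields $\mathbb{P}[\vartheta_{\mathbf{P}}^{k}(a) = v] = (\#\vartheta^{k}(a))^{-1}$ for all $v \in \vartheta^{k}(a)$, completing the induction and giving $\mathbb{P}[\vartheta_{\mathbf{P}}^{k-1}(u_1) = v] = \mathbb{P}[\vartheta_{\mathbf{P}}^{k-1}(u_2) = v]$ for all $u_1,u_2 \in \vartheta(a)$, which is identical production probabilities. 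Combining the lower bound equality of \Cref{THM:main-urp} with $\mathbf{H}_1 = \mathbf{q}_1$ and the identical set case of \Cref{C IS DS topological entropy} gives $h(\mu_{\mathbf{P}}) = \lambda^{-1}\mathbf{H}_1^{\top}\mathbf{R} = \lambda^{-1}\mathbf{q}_1^{\top}\mathbf{R} = h_{\text{top}}(X_{\vartheta})$.

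The main obstacle is precisely this identical set case: in contrast to the disjoint set case, $\vartheta_{\mathbf{P}}^{k}(a)$ need not remain equidistributed for $k \geq 2$ in general, and the delicate point is that the per-letter cardinality product $\prod_i \#\vartheta^{k-1}(c_i)$ must be shown to be the same for every realisation $u \in \vartheta(a)$. This is exactly where the identical set condition (common support) and unique realisation paths (a single decomposition, hence a clean product formula) combine. As an alternative route in this case, the same induction shows $\mathbf{H}_k = \mathbf{q}_k$ for all $k$, so that passing to the limit in \Cref{THM:main} gives $h(\mu_{\mathbf{P}}) = \lim_{k\to\infty} \lambda^{-k}\mathbf{H}_k^{\top}\mathbf{R} = \lim_{k\to\infty} \lambda^{-k}\mathbf{q}_k^{\top}\mathbf{R} = h_{\text{top}}(X_{\vartheta})$ directly.
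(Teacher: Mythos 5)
Your proposal is correct and follows essentially the same route as the paper: both cases reduce to \Cref{THM:main-urp} combined with \Cref{C IS DS topological entropy}, the only substantive work being the verification of identical production probabilities in the identical set case. The paper's induction there uses compatibility (equal abelianisations) to equate the production probabilities of $u, v \in \vartheta(a)$ directly, whereas you prove the slightly stronger statement that $\vartheta_{\mathbf{P}}^{k}(a)$ is equidistributed at every level, using unique realisation paths and the common support $\vartheta^{k-1}(u) = \vartheta^{k}(a)$; both arguments are sound and essentially interchangeable.
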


\begin{proof}
For $a \in \mathcal{A}$ and $s \in \vartheta (a)$, we have that $\mathbb{P} [\vartheta_{\mathbf{P}} (a) = s] = 1/(\#\vartheta (a))$; hence,
	\begin{align*}
	\mathbf{H}_1^{\top} \mathbf{R} = \sum_{a \in \mathcal{A}} R_{a} \log(\# \vartheta (a)).
	\end{align*}
If $\vartheta_{\mathbf{P}}$ satisfies the disjoint set condition, then by \Cref{THM:main-urp}, we have
	\begin{align*}
	h (\mu_{\mathbf{P}}) = \frac{1}{\lambda-1} \sum_{a \in \mathcal{A}} R_{a} \log(\# \vartheta (a)).
	\end{align*}
Thus, it follows by \Cref{C IS DS topological entropy} that $h (\mu_{\mathbf{P}}) = h_{\text{top}} (X_{\vartheta})$, and so $\mu_{\mathbf{P}}$ is a measure of maximal entropy.

Assume that $\vartheta_{\mathbf{P}}$ satisfies the identical set condition. Before we can apply \Cref{THM:main-urp}, we first need to verify that $\vartheta_{\mathbf{P}}$ has identical production probabilities.  To this end, let $a \in \mathcal{A}$, and $u$ and $v \in \vartheta (a)$.   Since $\vartheta_{\mathbf{P}}$ is compatible, $\lvert u \rvert_{b} = \lvert v \rvert_{b}$ for all $b \in \mathcal{A}$.  Hence, if $t \in \vartheta^{2} (a)$, it follows that
	\begin{align*}
	\mathbb{P} [\vartheta_{\mathbf{P}} (u) = t] = \prod_{b \in \mathcal{A}} (\# \vartheta (b))^{-\lvert u \rvert_{b}} = \prod_{b \in \mathcal{A}} (\# \vartheta (b))^{-\lvert v \rvert_{b}} = \mathbb{P} [\vartheta_{\mathbf{P}} (v) = t].
	\end{align*}
By way of induction, let $n \in \mathbb{N}$ and assume that $\mathbb{P} [\vartheta_{\mathbf{P}}^{n-1} (u) = w] = [\vartheta_{\mathbf{P}}^{n-1} (v) = w]$ for all $w \in \vartheta^{n} (a)$. Since $\vartheta_{\mathbf{P}}$ satisfies the identical set condition, for all $t \in \vartheta^{n+1} (a)$ we have $t \in \vartheta^{n} (u) \cap \vartheta^{n} (v)$, so
	\begin{align*}
	\mathbb{P} [\vartheta_{\mathbf{P}}^{n} (u) = t]
	= \hspace{-1em} \sum_{w \in \vartheta^{n-1} (u)} \hspace{-0.75em} \mathbb{P} [\vartheta_{\mathbf{P}}^{n-1} (u) = w] \, \mathbb{P} [\vartheta_{\mathbf{P}} (w) = t]
	= \hspace{-1em} \sum_{w \in \vartheta^{n-1} (v)} \hspace{-0.75em} \mathbb{P} [\vartheta_{\mathbf{P}}^{n-1} (v) = w] \, \mathbb{P} [\vartheta_{\mathbf{P}} (w) = t]
	= \mathbb{P} [\vartheta_{\mathbf{P}}^{n} (v) = t].
	\end{align*}
Therefore, by induction, $\vartheta_{\mathbf{P}}$ has identical production probabilities, and thus, by \Cref{THM:main-urp}, we have
	\begin{align*}
	h(\mu_{\mathbf{P}}) = \frac{1}{\lambda} \sum_{a \in \mathcal{A}} R_{a} \log(\# \vartheta (a)).
	\end{align*}
This with \Cref{C IS DS topological entropy} yields that $h (\mu_{\mathbf{P}}) = h_{\text{top}} (X_{\vartheta})$. Namely, $\mu_{\mathbf{P}}$ is a measure of maximal entropy.
\end{proof}

In general, a primitive and compatible random substitution with uniform probabilities need not give rise to a frequency measure of maximal entropy:\ see, for instance, \Cref{EX: R Fib}. However, for any subshift of a primitive and compatible random substitution, a measure of maximal entropy can be realised as a weak limit of frequency measures.

\begin{theorem}\label{T weak limit MME}
Let $X$ be the subshift of a primitive and compatible random substitution. There exists a sequence of frequency measures $(\mu_{n})_{n}$ such that $\mu_{n}$ converges weakly to a measure of maximal entropy $\mu$ for the system $(X, S)$.
\end{theorem}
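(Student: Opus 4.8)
The plan is to realise the measure of maximal entropy as a weak limit of frequency measures obtained by assigning \emph{uniform} production probabilities to the iterates $\vartheta^n$. Concretely, for each $n \in \N$ let $\vartheta^n_{\mathbf{Q}_n}$ denote the random substitution whose underlying set-valued map is $\vartheta^n$ and whose probability data $\mathbf{Q}_n$ is uniform on each set $\vartheta^n(a)$, so that $\mathbb{P}[\vartheta^n_{\mathbf{Q}_n}(a) = s] = 1/\#\vartheta^n(a)$ for every $a \in \mathcal{A}$ and $s \in \vartheta^n(a)$. Since $\vartheta$ is primitive and compatible, so is $\vartheta^n$; compatibility forces its substitution matrix to equal $M^n$ independently of $\mathbf{Q}_n$, with Perron--Frobenius eigenvalue $\lambda^n$ and right eigenvector $\mathbf{R}$ inherited from $\vartheta$, and its subshift is $X_{\vartheta^n} = X$. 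Let $\mu_n$ be the frequency measure of $\vartheta^n_{\mathbf{Q}_n}$; this is a frequency measure supported on $X$.

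The key computation is that, for the uniform choice $\mathbf{Q}_n$, the level-one entropy of $\vartheta^n_{\mathbf{Q}_n}$ is maximal: $H_{\mathbb{P}}(\vartheta^n_{\mathbf{Q}_n}(a)) = \log(\# \vartheta^n(a)) = q_{n,a}$ for each $a$, so the vector $\mathbf{H}_1$ associated with $\vartheta^n_{\mathbf{Q}_n}$ equals $\mathbf{q}_n$. Applying \Cref{THM:main} to the primitive random substitution $\vartheta^n_{\mathbf{Q}_n}$ with $k = 1$, and recalling that its Perron--Frobenius eigenvalue is $\lambda^n$, yields
\[
\frac{1}{\lambda^n} \mathbf{q}_n^{\top} \mathbf{R} - H(\lambda^{-n}) \leq h(\mu_n) \leq \frac{1}{\lambda^n - 1}\mathbf{q}_n^{\top} \mathbf{R}.
\]
By \Cref{T topological inflation entropy} the quantity $\lambda^{-n}\mathbf{q}_n^{\top} \mathbf{R}$ converges to $h_{\text{top}}(X)$ as $n \to \infty$; since $\lambda > 1$ we have $H(\lambda^{-n}) \to 0$ and $\lambda^n/(\lambda^n - 1) \to 1$, so both the upper and the lower bound converge to $h_{\text{top}}(X)$. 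By the squeeze theorem, $h(\mu_n) \to h_{\text{top}}(X)$.

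It then remains to extract a weak limit and transfer the entropy. The set of $S$-invariant Borel probability measures on the compact metrisable space $X$ is weak-$*$ compact, so after passing to a subsequence (which we do not relabel) we may assume that $\mu_n \to \mu$ weakly for some $S$-invariant $\mu$. Because $X$ is a subshift, every cylinder set is clopen, so each map $\nu \mapsto H_\nu(\xi_k)$ is a finite sum of continuous functions and hence continuous; consequently the representation $h(\nu) = \inf_k \tfrac{1}{2k} H_\nu(\xi_k)$ from \eqref{eq:refence_page_20_i} exhibits $h$ as an infimum of continuous functions, hence as upper semi-continuous. Therefore $h(\mu) \geq \limsup_{n} h(\mu_n) = h_{\text{top}}(X)$, while the variational principle \cite{MR648108} gives $h(\mu) \leq h_{\text{top}}(X)$. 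Combining these, $h(\mu) = h_{\text{top}}(X)$, so $\mu$ is a measure of maximal entropy, and the (relabelled) sequence $(\mu_n)_n$ is the desired sequence of frequency measures converging weakly to $\mu$.

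The main obstacle is the engineering in the second paragraph: one must produce frequency measures whose entropy approaches the topological value, and the point is that uniform probabilities on the $n$-th iterate are precisely what forces the measure-theoretic bounds of \Cref{THM:main} to collapse onto the topological entropy formula of \Cref{T topological inflation entropy}. The care needed here is to verify that the Perron--Frobenius data of $\vartheta^n_{\mathbf{Q}_n}$ are exactly $(\lambda^n,\mathbf{R})$ — which uses compatibility to guarantee independence of the eigenvector from the probabilities — and that $\mathbf{H}_1 = \mathbf{q}_n$ under the uniform choice. Once this collapse is established, the remaining ingredients (weak-$*$ compactness, upper semi-continuity of entropy for subshifts, and the variational principle) are standard and require no further input.
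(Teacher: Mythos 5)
Your proposal is correct and follows essentially the same route as the paper: uniform probabilities on the iterates $\vartheta^n$, the entropy bounds forcing $h(\mu_n)\to h_{\text{top}}(X)$ via \Cref{T topological inflation entropy}, then weak-$*$ compactness and upper semi-continuity of entropy. The only (immaterial) difference is that you invoke the general lower bound of \Cref{THM:main} with the vanishing correction term $H(\lambda^{-n})$, whereas the paper uses the cleaner bound of \Cref{THM:main-urp}, available here since compatibility implies unique realisation paths.
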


\begin{proof}
Let $\vartheta_{\mathbf{P}} = (\vartheta, \mathbf{P})$ be a primitive and compatible random substitution that gives rise to the subshift $X_{\vartheta}$, and let $\lambda$ denote the Perron--Frobenius eigenvalue of the substitution matrix $M_{\vartheta}$.  Then, for all $n \in \mathbb{N}$, the substitution $\vartheta^{n}$ gives rise to the same subshift as $\vartheta$, namely $X_{\vartheta}$.  Let $\mathbf{P}_{n}$ denote the family of probability vectors corresponding to uniform probabilities on $\vartheta^{n}$, and let $\mu_{n}$ denote the frequency measure corresponding to the random substitution $(\vartheta^{n},\mathbf{P}_{n})$. Since the space of probability measures supported on a compact set and endowed with the weak topology is compact, there exists a probability measure $\mu$ and a subsequence $(n_k)_k$ of the natural numbers such that $(\mu_{n_k})_{k \in \mathbb{N}}$ converges weakly to $\mu$. By \Cref{THM:main-urp}, we have
	\begin{align*}
	h (\mu_{n_k}) \geq \frac{1}{\lambda^{n_k}} \sum_{a \in \mathcal{A}} R_{a} \log(\# \vartheta^{n_k} (a))
	\end{align*}
for all $k \in \mathbb{N}$. By \Cref{T topological inflation entropy}, the right hand term converges to the topological entropy of the system $(X, S)$ as $k$ tends to infinity.
Hence,
	\begin{align*}
	\limsup_{k \rightarrow \infty} h (\mu_{n_k}) \geq h_{\text{top}} (X_{\vartheta}),
	\end{align*}
and so it follows, by the upper semi-continuity of measure theoretic entropy, that $h (\mu) = h_{\text{top}} (X_{\vartheta})$.
\end{proof}

\subsection{Intrinsic ergodicity}\label{SS intrinsic ergodicity}

For a class of primitive random substitutions satisfying the disjoint set condition, the frequency measure of maximal entropy given by \Cref{T IS DS MME} is the unique measure of maximal entropy among all shift-invariant Borel probability measures. This is the content of the main result of this section (\Cref{T intrinsic ergodicity}). The random substitutions considered here are all constant length and \textsl{recognisable}, the definition of which is given below. Recognisablity also appears in the work of Miro et al \cite{miro-et-al} on topological mixing of random substitutions and in Rust's paper on periodic points \cite{rust-periodic-points}. 

\begin{definition}
Let $\vartheta_{\mathbf{P}} = (\vartheta, \mathbf{P})$ denote a random substitution over a finite alphabet $\mathcal{A}$, and suppose that $\lvert  \vartheta(a) \rvert$ is well-defined for all $a \in \mathcal{A}$.  We call $\vartheta_{\mathbf{P}}$ \textsl{recognisable} if for all $x \in X_{\vartheta}$ there exist a unique $y = \cdots y_{-1}y_{0}y_{1} \cdots \in X_{\vartheta}$ and a unique integer $k \in \{ 0, \ldots, | \vartheta(y_{0}) | - 1 \}$ with $S^{-k}(x) \in \vartheta(y)$.
\end{definition}

Observe that if $\vartheta_{\mathbf{P}}$ is recognisable, then so is $\vartheta^{m}_{\mathbf{P}}$ for all $m \in \mathbb{N}$, and if $\vartheta_{\mathbf{P}}$ is of constant length $\ell$, then recognisability implies that every $x \in X_{\vartheta}$ is contained in precisely one of the sets $S^{k}(\vartheta(X_{\vartheta}))$ for $k \in \{1, \ldots, \ell \}$. Further, we have the following \textsl{local} version of recognisability. This is similar to the case of deterministic substitutions where an equivalence between global and local recognisability holds. Intuitively, local recognisability means that applying a \textsl{finite window} to a sequence is enough to determine the position and the type of the inflation word in the middle of that window.

\begin{lemma}\label{LEM:local-recog}
Let $\vartheta_{\mathbf{P}} = (\vartheta, \mathbf{P})$ denote a primitive random substitution over an alphabet $\mathcal{A}$, and suppose that $\lvert \vartheta(a) \rvert$ is well-defined for all $a \in \mathcal{A}$. If $\vartheta_{\mathbf{P}}$ is recognisable, then there exists a smallest natural number $\kappa(\vartheta)$, called the \textsl{recognisability radius of $\vartheta$}, with the following property. If $x \in \vartheta([a])$ for some $a \in \mathcal{A}$ and $x_{[-\kappa(\vartheta),\kappa(\vartheta)]} = y_{[-\kappa(\vartheta),\kappa(\vartheta)]}$ for some $y \in X_{\vartheta}$, then $y \in \vartheta([a])$.
\end{lemma}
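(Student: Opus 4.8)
The plan is to reduce the statement to the single assertion that, for each $a \in \mathcal A$, the set $\vartheta([a])$ is \emph{open} in $X_\vartheta$; it is already closed by \Cref{L compact to compact}, since the cylinder $[a] = \{w \in X_\vartheta : w_0 = a\}$ is compact. Once $\vartheta([a])$ is known to be clopen, the existence of a finite window radius is a routine fact about subshifts, and the existence of a \emph{smallest} admissible radius follows because the collection of admissible radii is a non-empty, upward-closed subset of $\N$ and hence has a least element.

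First I would set up the partition of $X_\vartheta$ encoded by recognisability. For $b \in \mathcal A$ write $\ell_b = |\vartheta(b)|$, which is well-defined by hypothesis. By \Cref{L compact to compact} each $\vartheta([b])$ is compact, hence closed, and since $S$ is a homeomorphism so is each set $V_{b,k} := S^{k}(\vartheta([b]))$ for $0 \le k \le \ell_b - 1$. The key claim is that recognisability yields
\[
X_\vartheta = \bigsqcup_{b \in \mathcal A} \; \bigsqcup_{k=0}^{\ell_b - 1} V_{b,k},
\]
a disjoint union of finitely many closed sets. Indeed, for $x \in X_\vartheta$ recognisability provides a unique pair $(y,k)$ with $S^{-k}(x) \in \vartheta(y)$ and $0 \le k \le \ell_{y_0}-1$; setting $b = y_0$ gives $S^{-k}(x) \in \vartheta(y) \subseteq \vartheta([b])$, so $x \in V_{b,k}$, which establishes the covering. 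The uniqueness of $(y,k)$ then forces the pieces to be pairwise disjoint: if $x \in V_{b,k} \cap V_{b',k'}$ then $x$ admits two such decompositions, which must coincide, giving $k = k'$ and $b = y_0 = y'_0 = b'$.

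The crux is the observation that a \emph{finite} partition of a space into closed sets consists of clopen sets: each $V_{b,k}$ is the complement of the finite union $\bigcup_{(b',k') \ne (b,k)} V_{b',k'}$ of closed sets, hence is open. In particular $\vartheta([a]) = V_{a,0}$ is clopen. I expect this to be the step carrying the real content: the entire force of recognisability is spent in turning the pointwise "unique decomposition" statement into a finite partition of $X_\vartheta$ into closed sets, after which openness of each piece is automatic, and no further analysis of how the decomposition varies is needed.

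Finally I would extract the radius. A clopen subset of the subshift $X_\vartheta$ is both open and compact, hence a finite union of cylinder sets $[u]_{m}$; membership in such a finite union is determined by the coordinates lying in some symmetric window $[-\kappa_a, \kappa_a]$. Thus if $x \in \vartheta([a])$ and $y \in X_\vartheta$ satisfies $y_{[-\kappa_a,\kappa_a]} = x_{[-\kappa_a,\kappa_a]}$, then $y \in \vartheta([a])$. Setting $\kappa = \max_{a \in \mathcal A} \kappa_a$ gives a single radius valid for all letters simultaneously, and since enlarging the window preserves the defining property, the set of admissible radii is non-empty and upward-closed in $\N$, so it has a least element, which we define to be $\kappa(\vartheta)$. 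This completes the plan.
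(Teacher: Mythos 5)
Your proof is correct and rests on the same decisive ingredient as the paper's: the finite partition $X_\vartheta = \bigsqcup_{b \in \mathcal A}\bigsqcup_{k=0}^{|\vartheta(b)|-1} S^k(\vartheta([b]))$ into closed sets, furnished by recognisability together with \Cref{L compact to compact}, which makes $\vartheta([a])$ clopen. The only divergence is the final step: the paper extracts the radius by a sequential compactness contradiction (pairs in $\vartheta([a])\times\vartheta([a])^{C}$ agreeing on ever-larger windows accumulate at a point lying in both closed sets), whereas you argue directly that a clopen subset of a subshift is a finite union of cylinders and hence window-determined; both routes are routine once the clopen partition is in hand, and your direct version has the mild advantage of avoiding the contradiction and making the upward-closedness of admissible radii (hence the existence of a smallest one) explicit.
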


\begin{proof}
By way of contradiction, suppose there is no radius of recognisability. In which case, there exists a sequence of tuples $((x^{(k)},y^{(k)}))_{k \in \mathbb{N}}$ with $(x^{(k)},y^{(k)}) \in \vartheta([a]) \times \vartheta( [a])^{C}$ and $x^{(k)}_{[-k,k]} = y^{(k)}_{[-k,k]}$ for all $k \in \mathbb{N}$. Let $(x,y) \in X_{\vartheta} \times X_{\vartheta}$ be an accumulation point of this sequence. By recognisability,
	\begin{align*}
	X_{\vartheta} = \bigsqcup_{b \in \mathcal{A}} \bigsqcup_{k = 0}^{\lvert \vartheta(b) \rvert - 1} S^{k} (\vartheta([b])),
	\end{align*}
and by construction, $x = y$.  Due to \Cref{L compact to compact}, and since $S$ is continuous, we have that $S^{k}(\vartheta([b]))$ is compact for all $b \in \mathcal{A}$ and $k \in \mathbb Z$. Hence, both $\vartheta([a])$ and $\vartheta([a])^{C}$ are compact. It therefore follows that $x \in \vartheta([a])$ and $x= y \in \vartheta([a])^{C}$, leading to a contradiction.
\end{proof}

\begin{lemma}\label{LEM:recog-DSC}
Assume the setting of \Cref{LEM:local-recog}. If the random substitution $\vartheta_{\mathbf{P}}$ is recognisable, then it satisfies the disjoint set condition.
\end{lemma}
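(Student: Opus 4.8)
The plan is to argue by contradiction, converting a failure of the disjoint set condition into a failure of the uniqueness built into recognisability. So suppose the condition fails: there exist a letter $a \in \mathcal{A}$, distinct realisations $u, v \in \vartheta(a)$, an integer $k \in \mathbb{N}$, and a common realisation $w \in \vartheta^k(u) \cap \vartheta^k(v)$. The goal is to manufacture a single bi-infinite sequence admitting two genuinely different level-$k$ desubstitutions, which contradicts recognisability of $\vartheta^k_{\mathbf{P}}$ (legitimate to invoke, since recognisability of $\vartheta_{\mathbf{P}}$ passes to all powers $\vartheta^m_{\mathbf{P}}$).

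First I would fix a point $x \in X_{\vartheta}$ with $x_0 = a$, which exists by primitivity. Using the canonical alignment of $\vartheta(\cdot)$ in which the inflation image of the zeroth letter starts at position $0$, I build two points $y^u, y^v \in \vartheta(x) \subseteq X_{\vartheta}$ that agree everywhere except on the central block $[0, |\vartheta(a)| - 1]$, where $y^u$ reads $u$ and $y^v$ reads $v$. Because $|\vartheta(a)|$ is well-defined, the two readings occupy exactly the same block, so the remaining data can be chosen identically; since $u \neq v$, this forces $y^u \neq y^v$. Next, applying $\vartheta^k$ and using the realisation chain witnessing $w \in \vartheta^k(u)$ on the central block of $y^u$ (respectively the chain witnessing $w \in \vartheta^k(v)$ on $y^v$), while making the \emph{same} realisation choices on every other block, I obtain points $z^u \in \vartheta^k(y^u)$ and $z^v \in \vartheta^k(y^v)$. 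This uses that the substitution acts independently letter-by-letter, so the two different chains on the central block coexist with one common choice of realisations elsewhere. The upshot is that $z^u$ and $z^v$ both carry $w$ on $[0, |w| - 1]$ and identical data off that block, whence $z^u = z^v =: z \in X_{\vartheta}$.

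The decisive step is the alignment bookkeeping. Working with the canonical alignments of $\vartheta(\cdot)$ and $\vartheta^k(\cdot)$, both decompositions place the origin of $z$ at the start of a level-$k$ inflation word; that is, $z \in \vartheta^k(y^u)$ and $z \in \vartheta^k(y^v)$ each hold with offset $0$ in the sense of the recognisability definition. Recognisability of $\vartheta^k_{\mathbf{P}}$ then forces the desubstitution pair to be unique, giving $y^u = y^v$ and contradicting $y^u \neq y^v$. This contradiction establishes the disjoint set condition.

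The main obstacle is precisely this translation between the \emph{finite-word} statement of the disjoint set condition and the \emph{bi-infinite, dynamical} statement of recognisability: one must embed $u$, $v$, and the common descendant $w$ into a full tower $x \to y^{u/v} \to z$ and then verify that the two resulting desubstitutions share a common offset (so that the uniqueness clause applies) while differing at the intermediate level. The hypothesis doing the real work here is length-compatibility ($|\vartheta(a)|$ well-defined), as it is what keeps the block boundaries—and hence the offsets—identical across the two towers; I would flag this explicitly, since without it the two central blocks could occupy different ranges and the offsets need not match.
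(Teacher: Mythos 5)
Your proof is correct and follows essentially the same route as the paper's: negate the disjoint set condition, build two sequences in $\vartheta(x)$ that differ only on the central inflation block (using that $\lvert\vartheta(a)\rvert$ is well-defined to keep the blocks aligned), push them forward to a common sequence, and contradict the uniqueness clause of recognisability. You are in fact slightly more careful than the paper, which treats only the level-$1$ failure $\vartheta(s)\cap\vartheta(t)\neq\varnothing$, whereas you handle an arbitrary level $k$ by invoking recognisability of $\vartheta^{k}_{\mathbf{P}}$ (a fact the paper records immediately after the definition).
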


\begin{proof}
By way of contradiction, suppose that $\vartheta_{\mathbf{P}}$ does not satisfy the disjoint set condition. In which case, there exist $a \in \mathcal{A}$, and $s$ and $t \in \vartheta(a)$ with $s \neq t$ and $\vartheta(s) \cap \vartheta(t) \neq \varnothing$. For $ x \in [a]$, observe that there exist $y$ and $z \in \vartheta(x)$ such that $y_{[0, \lvert \vartheta(a) \rvert -1]} = s$, $z_{[0, \lvert \vartheta(a) \rvert - 1]} = t$, and $y$ coincides with $z$ at all other positions. Hence, there exists a $w \in \vartheta(y) \cap \vartheta(z)$ that can be constructed by mapping $s$ and $t$ to the same word $v \in \vartheta(s) \cap \vartheta(t)$. This is a contradiction to recognisability.
\end{proof}

The converse of this statement does \textsl{not} hold: a counterexample is given by the random period doubling substitution. When establishing intrinsic ergodicity for certain random substitutions, we will be concerned with recognisability for some power of those random substitutions. It follows from a simple recursive argument that the recognisability radius of $\vartheta^{m}_{\mathbf{P}}$ grows (asymptotically) at most with the inflation factor as $m$ increases. For constant length substitutions the precise result reads as follows.

\begin{lemma}\label{LEM:recognisability-recursion}
Let $\vartheta_{\mathbf{P}} = (\vartheta, \mathbf{P})$ be a primitive random substitution of constant length $\ell$. If $\vartheta_{\mathbf{P}}$ is recognisable, then for all $m \in \mathbb{N}$, we have that
	\begin{align*} 
	\kappa(\vartheta^{m}) \leq \frac{\ell^{m} -1}{\ell -1} \kappa(\vartheta).
	\end{align*} 
\end{lemma} 

\begin{proof}
We proceed by induction.  The result is immediate for $m =1$. Assume it holds for some $m \in \mathbb{N}$, and note, by primitivity, that $X_{\vartheta} = X_{\vartheta^{m}}$. Let $a \in \mathcal{A}$, $x \in \vartheta^{m+1}([a])$ and $y \in X_{\vartheta}$ with $x_{[-k,k]} = y_{[-k,k]}$ for $k = \ell \kappa(\vartheta^{m}) + \kappa(\vartheta)$; in particular, $y \in \vartheta(X_{\vartheta})$. Let $v \in \vartheta^{m}([a])$ be such $x \in \vartheta(v)$, and let $w \in X_{\vartheta}$ such that $y \in \vartheta(w)$. Applying local recognisability to the pair $(S^{j\ell}x,S^{j\ell}y)$ for each $j \in \{-\kappa(\vartheta^{m}), \ldots, \kappa(\vartheta^{m}) \}$, in combination with \Cref{LEM:recog-DSC}, we obtain that $v_{[-\kappa(\vartheta^{m}), \kappa(\vartheta^{m})]} = w_{[-\kappa(\vartheta^{m}), \kappa(\vartheta^{m})]}$.  By the definition of $\kappa(\vartheta^{m})$, this implies $w \in \vartheta^{m}([a])$ and so $y \in \vartheta(w) \subseteq \vartheta^{m+1}([a])$, yielding
	\begin{align*}
	\kappa(\vartheta^{m+1}) \leq \ell \kappa(\vartheta^{m}) + \kappa(\vartheta) = \kappa(\vartheta) \sum_{j = 0}^{m} \ell^{j} = \frac{\ell^{m} - 1}{\ell - 1} \kappa(\vartheta),
	\end{align*} 
where the second to last equality follows from the inductive hypothesis.
\end{proof}

Since every primitive recognisable random substitution $\vartheta_{\mathbf{P}}$ satisfies the disjoint set condition, if $\vartheta_{\mathbf{P}}$ is compatible, then \Cref{T IS DS MME} gives that the frequency measure corresponding to uniform probabilities is a measure of maximal entropy. Without compatibility, we may not utilise Theorem \ref{T topological inflation entropy} to obtain a formula for the topological entropy of the corresponding subshift. However, we can compute directly the topological entropy for a class of random substitution subshifts that includes all the non-compatible random substitution subshifts for which we prove intrinsic ergodicity in \Cref{T intrinsic ergodicity}. This is the content of \Cref{LEM:noncompatible top entropy}. Combining this with Theorem \ref{THM:main-urp} gives that the frequency measure corresponding to uniform probabilities is a measure of maximal entropy.

\begin{lemma}\label{LEM:noncompatible top entropy}
Let $\vartheta_{\mathbf{P}}$ be a primitive recognisable random substitution of constant length $\ell$. If there exists an $N \in \mathbb{N}$ such that $\# \vartheta (a) = N$ for all $a \in \mathcal{A}$, then 
	\begin{align}\label{eq:top_entropy_constant_length}
	h_{\text{top}} (X_{\vartheta}) = \frac{1}{\ell -1} \log (N).
	\end{align}
In particular, the frequency measure $\mu$ corresponding to uniform probabilities is a measure of maximal entropy for the subshift $X_{\vartheta}$.
\end{lemma}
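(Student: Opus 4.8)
The plan is to establish the two inequalities $h_{\text{top}}(X_{\vartheta}) \geq \frac{1}{\ell-1}\log(N)$ and $h_{\text{top}}(X_{\vartheta}) \leq \frac{1}{\ell-1}\log(N)$ by direct combinatorial estimates on $\#\mathcal{L}_{\vartheta}^n$, and then to deduce the statement about the measure of maximal entropy from \Cref{THM:main-urp}. Throughout I would use that a constant length substitution is geometrically compatible, hence has unique realisation paths, that recognisability forces the disjoint set condition by \Cref{LEM:recog-DSC}, and that here $\lambda = \ell$ and $\mathcal{L}(X_{\vartheta}) = \mathcal{L}_{\vartheta}$.

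For the lower bound I would first count inflation words. Since the union $\vartheta^m(a) = \bigcup_{s \in \vartheta(a)} \vartheta^{m-1}(s)$ is disjoint by the disjoint set condition, and each $\#\vartheta^{m-1}(s) = \prod_{i=1}^{\ell} \#\vartheta^{m-1}(s_i)$ by unique realisation paths, an induction on $m$ shows that $\#\vartheta^m(a)$ is independent of $a$ and satisfies $c_m = N c_{m-1}^{\ell}$ with $c_1 = N$. Solving the resulting linear recursion for $\log(c_m)$ gives $\log(\#\vartheta^m(a)) = \frac{\ell^m - 1}{\ell - 1}\log(N)$. As $\vartheta^m(a) \subseteq \mathcal{L}_{\vartheta}^{\ell^m}$, this yields $\#\mathcal{L}_{\vartheta}^{\ell^m} \geq \exp\bigl(\frac{\ell^m - 1}{\ell-1}\log(N)\bigr)$, and since the limit defining $h_{\text{top}}$ in \eqref{eq:top_entropy_constant} exists it may be evaluated along the subsequence $n = \ell^m$, giving the lower bound.

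For the upper bound I would exploit recognisability. Every $w \in \mathcal{L}_{\vartheta}^n$ equals $x_{[0,n-1]}$ for some $x \in X_{\vartheta}$, and writing $X_{\vartheta} = \bigsqcup_{k} S^{k}(\vartheta(X_{\vartheta}))$, the word $w$ is reconstructed from the offset $k$, the subword of the preimage $y$ indexing the at most $\lceil n/\ell \rceil + 1$ length-$\ell$ blocks meeting the window, and the choice of realisation in each such block. This gives a surjection onto $\mathcal{L}_{\vartheta}^n$, so that $g(n) := \#\mathcal{L}_{\vartheta}^n$ obeys $g(n) \leq \ell\, N^{\lceil n/\ell \rceil + 1}\, g(\lceil n/\ell \rceil + 1)$. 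Iterating this roughly $\log_{\ell}(n)$ times, the exponents of $N$ sum to $\frac{n}{\ell - 1}(1 + o(1))$ while the accumulated constants contribute only $O(\log(n))$, so $\log(g(n)) \leq \frac{n}{\ell-1}\log(N) + o(n)$. The main obstacle is precisely controlling this iteration: one must track how the additive boundary correction propagates over $O(\log(n))$ levels and confirm it remains $o(n)$, which is made transparent by setting $n_{j+1} = \lceil n_j/\ell \rceil + 1$ and bounding $n_j \leq n/\ell^j + \frac{2\ell}{\ell - 1}$.

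Finally, for the maximality of the uniform measure, I would apply \Cref{THM:main-urp} with $k = 1$: unique realisation paths hold, and by \Cref{LEM:recog-DSC} the substitution $\vartheta_{\mathbf{P}}$ satisfies the disjoint set condition, so the upper bound is attained and $h(\mu) = \frac{1}{\lambda - 1}\mathbf{H}_1^{\top}\mathbf{R}$. Under uniform probabilities each $\vartheta_{\mathbf{P}}(a)$ is equidistributed over its $N$ realisations, whence $H_{1,a} = \log(N)$ for every $a \in \mathcal{A}$ and $\mathbf{H}_1^{\top}\mathbf{R} = \log(N)\sum_{a \in \mathcal{A}} R_a = \log(N)$ by $\lVert \mathbf{R} \rVert_1 = 1$. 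With $\lambda = \ell$ this gives $h(\mu) = \frac{1}{\ell - 1}\log(N) = h_{\text{top}}(X_{\vartheta})$, so $\mu$ is a measure of maximal entropy.
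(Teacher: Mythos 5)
Your proposal is correct, and its overall architecture (two combinatorial inequalities for $h_{\text{top}}$, then \Cref{THM:main-urp} plus \Cref{LEM:recog-DSC} with $\lambda=\ell$ and $\mathbf{H}_1^{\top}\mathbf{R}=\log(N)$ for the measure statement) matches the paper's; the final paragraph is essentially verbatim the paper's argument. The two entropy bounds are, however, executed by genuinely different routes. For the lower bound the paper shows $\mathcal{L}_{\vartheta}^{m\ell}\supseteq\vartheta(\mathcal{L}_{\vartheta}^{m})$ and uses the recognisability radius to separate images, getting $\#\mathcal{L}_{\vartheta}^{m\ell}\geq N^{m}\,\#\mathcal{L}_{\vartheta}^{m-2r}$ and hence the self-consistent inequality $h_{\text{top}}\geq\frac{1}{\ell}\log(N)+\frac{1}{\ell}h_{\text{top}}$; you instead compute $\#\vartheta^{m}(a)=N^{(\ell^{m}-1)/(\ell-1)}$ exactly from the disjoint set condition and unique realisation paths and evaluate the limit along $n=\ell^{m}$. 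Your version is arguably cleaner and makes transparent that the lower bound needs only the disjoint set condition (via \Cref{LEM:recog-DSC}) rather than any quantitative control of $\kappa(\vartheta)$. For the upper bound both arguments rest on the same covering of $\mathcal{L}_{\vartheta}^{n}$ by shifted inflations of shorter words, but the paper performs a single renormalisation step ($\#\mathcal{L}_{\vartheta}^{m\ell}\leq\ell N^{m+1}\#\mathcal{L}_{\vartheta}^{m+1}$, divide by $m\ell$, let $m\to\infty$, and rearrange $h_{\text{top}}\leq\frac{1}{\ell}\log(N)+\frac{1}{\ell}h_{\text{top}}$), whereas you iterate the recursion $O(\log n)$ times down to bounded length and sum the geometric series of exponents by hand; the one-step fixed-point argument spares you exactly the bookkeeping of boundary corrections that you flag as the main obstacle. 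Both are valid; nothing is missing from your argument.
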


\begin{proof}
For $m \in \mathbb{N}$, we have
	\begin{align*}
	\mathcal{L}_{\vartheta}^{m\ell}
	= \bigcup_{v \in \mathcal{L}_{\vartheta}^{m+1}} \bigcup_{u \in \vartheta (v)} \bigcup_{j=1}^{\ell} \left\{ u_{[j,j+\ell m-1]} \right\}.
	\end{align*}
Since by our hypothesis and \Cref{LEM:recog-DSC} we have $\# \vartheta(v) = N^{\lvert v \rvert}$ for all $v \in \mathcal{L}_{\vartheta}^{m+1}$, it follows that $\# \mathcal{L}_{\vartheta}^{m\ell} \leq \ell N^{m+1} \# \mathcal{L}_{\vartheta}^{m+1}$, and so
	\begin{equation}\label{E top entropy UB}
	h_{\text{top}} (X_{\vartheta})
	= \lim_{m \rightarrow \infty} \frac{\log (\# \mathcal L_{\vartheta}^{m\ell})}{m\ell} \leq \frac{1}{\ell} \log(N) + \frac{1}{\ell} h_{\text{top}} (X_\vartheta).
	\end{equation}
On the other hand, 
	\begin{align*}
	\mathcal{L}_{\vartheta}^{m\ell} 
	\supseteq \vartheta (\mathcal{L}_{\vartheta}^{m}) 
	= \bigcup_{v \in \mathcal{L}_{\vartheta}^{m}} \vartheta (v),
	\end{align*}
By recognisability, there is a number $r \leq \kappa(\vartheta)$ such that for $u,v \in \mathcal L_{\vartheta}^m$ with $v_{[r+1, m -r]} \neq u_{[r+1, m -r]}$ we have $\vartheta(u) \cap \vartheta(v) = \varnothing$.
Hence, $\# \mathcal L_{\vartheta}^{m \ell} \geq N^m \# \mathcal L_{\vartheta}^{m - 2r}$, so
	\begin{align}\label{E top entropy LB}
	h_{\text{top}} (X_{\vartheta})
	 = \lim_{m \to \infty} \frac{1}{m \ell} \log(\# \mathcal L_\vartheta^{m\ell})
	 \geq \frac{1}{\ell} \log(N) + \frac{1}{\ell} h_{\text{top}} (X_{\vartheta}).
	\end{align}
Combining \eqref{E top entropy UB} and \eqref{E top entropy LB} and rearranging yields \eqref{eq:top_entropy_constant_length}. To see that $\mu$ is a measure of maximal entropy for the subshift $X_{\vartheta}$, observe that by \Cref{THM:main-urp} and \Cref{LEM:recog-DSC}, we have
\begin{equation*}
    h(\mu) = \frac{1}{\ell - 1} \sum_{a \in \mathcal{A}} R_{a} \sum_{s \in \vartheta (a)} \frac{1}{N} \log (N) = \frac{1}{\ell-1} \log (N) \text{,}
\end{equation*}
since $\# \vartheta (a) = N$ for all $a \in \mathcal{A}$ and $\sum_{a \in \mathcal{A}} R_{a} = 1$. Hence $h (\mu) = h_{\text{top}} (X_{\vartheta})$.
\end{proof}

\begin{remark}
In contrast to the compatible case, it is not true in general that for a primitive and constant length random substitution the measure corresponding to uniform probabilities is a measure of maximal entropy. We present an example of such a random substitution in \Cref{EX: R dependent on probs}.
\end{remark}

We now give the statement of the main result of this section, \Cref{T intrinsic ergodicity}.

\begin{theorem}\label{T intrinsic ergodicity}
Let $\vartheta_{\mathbf{P}} = (\vartheta, \mathbf{P})$ be a primitive recognisable random substitution of constant length $\ell$ and assume that at least one of the following holds:
	\begin{itemize}
	\item[(i)] $\vartheta(a)$ has the same cardinality for all $a \in \mathcal{A}$;
	\item[(ii)] $\vartheta_{\mathbf{P}}$ is compatible and $\ell$ is the only non-zero eigenvalue of the substitution matrix.
	\end{itemize}
Under these hypotheses, the system $(X_{\vartheta}, S)$ is intrinsically ergodic. Moreover, the unique measure of maximal entropy is the frequency measure corresponding to uniform probabilities.
\end{theorem}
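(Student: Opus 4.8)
Since the frequency measure $\mu$ corresponding to uniform probabilities is already a measure of maximal entropy --- under hypothesis (i) by \Cref{LEM:noncompatible top entropy}, and under hypothesis (ii) because recognisability forces the disjoint set condition (\Cref{LEM:recog-DSC}), so that \Cref{THM:main-urp} and \Cref{C IS DS topological entropy} give $h(\mu)=h_{\text{top}}(X_{\vartheta})$ --- the whole content of the theorem is \emph{uniqueness}. The plan is therefore to fix an arbitrary ergodic $\nu$ with $h(S,\nu)=h_{\text{top}}(X_{\vartheta})$ and prove $\nu=\mu$. The first ingredient is to use recognisability to realise $(X_{\vartheta},S)$ as a tower. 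Writing $Y=\vartheta(X_{\vartheta})$, constant length together with recognisability gives $X_{\vartheta}=\bigsqcup_{k=0}^{\ell-1}S^{k}(Y)$, so $(X_{\vartheta},S)$ is a tower of constant height $\ell$ over $(Y,S^{\ell})$, and $Y$ is compact by \Cref{L compact to compact}. Local recognisability (\Cref{LEM:local-recog}) shows that the desubstitution $\gamma\colon Y\to X_{\vartheta}$, sending a point $\cdots v_{-1}.v_{0}v_{1}\cdots$ with $v_{i}\in\vartheta(y_{i})$ to its parent $\cdots y_{-1}.y_{0}y_{1}\cdots$, is continuous and satisfies $\gamma\circ S^{\ell}=S\circ\gamma$; thus $\gamma$ is a topological factor map from $(Y,S^{\ell})$ onto $(X_{\vartheta},S)$. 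I then set $\nu_{Y}=\ell\,\nu|_{Y}$, an ergodic $S^{\ell}$-invariant measure, and $\bar\nu=\gamma_{*}\nu_{Y}$.

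The engine is an entropy decomposition along this tower. Abramov's formula gives $h(S,\nu)=\tfrac1\ell h(S^{\ell},\nu_{Y})$, and the Abramov--Rokhlin formula for the factor $\gamma$ gives $h(S^{\ell},\nu_{Y})=h(S,\bar\nu)+h(S^{\ell},\nu_{Y}\mid\gamma)$. The disjoint set condition (\Cref{LEM:recog-DSC}) guarantees that the fibres of $\gamma$ parametrise injectively the \emph{independent} choices of realisation at each coordinate, so the fibre term is bounded by the conditional letter entropy, namely
\[
h(S^{\ell},\nu_{Y}\mid\gamma)\leq\sum_{a\in\mathcal A}\bar\nu([a])\log(\#\vartheta(a)),
\]
with equality precisely when, conditioned on the parent, the realisations are independent and uniform over each $\vartheta(y_{i})$. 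Hence every ergodic $\nu$ obeys $h(S,\nu)\le\tfrac1\ell\bigl(h(S,\bar\nu)+\sum_{a}\bar\nu([a])\log(\#\vartheta(a))\bigr)$. When $\nu$ maximises entropy, $h(S,\bar\nu)\le h_{\text{top}}(X_{\vartheta})=h(S,\nu)$ forces both $h(S,\bar\nu)=h_{\text{top}}(X_{\vartheta})$ and equality in the fibre bound, provided the summed term equals $(\ell-1)h_{\text{top}}(X_{\vartheta})$. Under (i) this is automatic, since $\sum_{a}\bar\nu([a])\log N=\log N=(\ell-1)h_{\text{top}}(X_{\vartheta})$; under (ii) the assumption that $\ell$ is the only non-zero eigenvalue makes the modulus of the second eigenvalue vanish, so by \Cref{L letter frequencies} the abelianised inflation statistics are \emph{exact}, and this rigidity pins the summed term to its Perron--Frobenius value $\sum_{a}R_{a}\log(\#\vartheta(a))=(\ell-1)h_{\text{top}}(X_{\vartheta})$. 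In either case $\bar\nu$ is again a measure of maximal entropy and $\nu$ has uniform production probabilities.

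Iterating, every desubstitution $\bar\nu^{(k)}$ of $\nu$ is a measure of maximal entropy with uniform production. To convert this self-similar rigidity into $\nu=\mu$, I would feed it into the renormalisation self-consistency of \Cref{LEM:key}: uniform production makes the word statistics of $\nu$ satisfy the \emph{same} induced-substitution fixed-point relation as those of $\mu$, and since the induced substitution matrices are primitive their normalised Perron--Frobenius eigenvectors are unique, so $\nu([w])=\mu([w])$ for all $w\in\mathcal L_{\vartheta}$, that is, $\nu=\mu$. Quantitatively, the convergence of $\ell^{-k}M^{k}$ to the Perron projection, applied to the (probability) letter-frequency vectors of the $\bar\nu^{(k)}$, forces the letter frequencies of $\nu$ to equal $\mathbf R$, and the word-level analogue propagates this to all cylinders. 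The weak Gibbs bounds on inflation-word cylinders supplied by \Cref{LEM:mu-bound} provide exactly the control needed to make these limiting comparisons rigorous and to absorb the boundary effects created when a generic long word is cut into inflation words by means of \Cref{LEM:local-recog} and \Cref{LEM:recognisability-recursion}.

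The main obstacle is precisely the step that replaces Bowen's specification argument. Frequency measures of maximal entropy fail the usual Gibbs property on arbitrary cylinders, so uniqueness cannot be read off from a global Gibbs estimate, and a naive Shannon--McMillan--Breiman comparison leaves an $o(n)$ slack in the exponent that is consistent with mutually singular measures of equal entropy. The difficulty is thus concentrated in two places: establishing the weaker Gibbs property of \Cref{LEM:mu-bound} on inflation-word cylinders, and carrying out the equality analysis of the fibre-entropy bound, where one must show that maximality genuinely forces \emph{uniform production} rather than merely a matching entropy rate. Case (ii) is the more delicate, since there the homogeneity supplied by the single non-zero eigenvalue is what rules out non-uniform letter frequencies that would otherwise remain compatible with the entropy bound.
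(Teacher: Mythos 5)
Your route is genuinely different from the paper's. The paper does \emph{not} pass through a desubstitution factor map or a relative-entropy decomposition: it assumes a second ergodic measure of maximal entropy $\nu$ exists, uses mutual singularity to find an invariant set $B$ with $\mu(B)=0$, $\nu(B)=1$, approximates $B$ in the algebra generated by the inflation-word partitions $\xi_{m}$ (via \Cref{LEM:approx}), and then runs an Adler--Weiss counting argument on the induced systems $(X_{m,k},S^{\ell^{m}})$, where the only quantitative input is the \emph{lower} Gibbs bound of \Cref{LEM:mu-bound} controlling the number of partition atoms meeting a set of given $\mu$-measure. Your plan --- tower over $Y=\vartheta(X_{\vartheta})$, Abramov plus the factor entropy formula, rigidity of the equality case in the fibre-entropy bound, then a renormalisation fixed-point argument --- is a legitimate alternative strategy, and several of its ingredients (the tower decomposition, $\nu(S^{k}Y)=\ell^{-1}$, continuity of the desubstitution via \Cref{LEM:local-recog}, injective fibre parametrisation via \Cref{LEM:recog-DSC}) are sound.

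However, as written there are two genuine gaps. First, the closing step is not justified: from ``every desubstitution $\bar\nu^{(k)}$ has uniform, conditionally independent production'' you conclude $\nu=\mu$ by appealing to \Cref{LEM:key} and uniqueness of the Perron--Frobenius eigenvector of the induced matrix $M_{n}$. But \Cref{LEM:key} is a statement about $\mu$; you have not derived the analogous self-consistency identity for $\nu$, and even once you do, what you obtain is $\nu^{(n)}=M_{n}\bigl(\bar\nu^{(1)}\bigr)^{(n)}$ with $\bar\nu^{(1)}\neq\nu$ on the right-hand side --- so $\nu^{(n)}$ is \emph{not} exhibited as an eigenvector of $M_{n}$, and PF uniqueness does not apply directly. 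You need the full iteration $\nu^{(n)}=M_{n}^{k}\bigl(\bar\nu^{(k)}\bigr)^{(n)}$ together with convergence of $M_{n}^{k}$ to the rank-one Perron projection, uniformly over the simplex of admissible length-$n$ frequency vectors; this requires primitivity (or an equivalent spectral statement) for every induced matrix $M_{n}$, which you neither state nor prove. Second, in case (ii) your equality analysis needs $\sum_{a}\bar\nu^{(k)}([a])\log(\#\vartheta(a))=(\ell-1)h_{\text{top}}(X_{\vartheta})$ at \emph{every} level $k$, i.e.\ that the letter frequencies of every desubstituted measure equal $\mathbf{R}$. You attribute this to the single-non-zero-eigenvalue hypothesis via \Cref{L letter frequencies}, but that lemma controls inflation-word lengths (trivial in constant length), not the letter statistics of an arbitrary invariant $\nu$. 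The correct mechanism is the iterated desubstitution relation $\pi^{(k-1)}=\ell^{-1}M\pi^{(k)}$ combined with $\ell^{-k}M^{k}\to\mathbf{R}\mathbf{1}^{\top}$; since that uses only primitivity, your argument never actually invokes the eigenvalue hypothesis of (ii), which should have flagged that a step is being glossed over. Finally, you cite \Cref{LEM:mu-bound} as supplying the needed control, but nothing in your outline uses it; in the paper it is the engine of the whole uniqueness proof.
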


The proof of \Cref{T intrinsic ergodicity} is presented in \Cref{SS intrinsic ergodicity proof}. We note that the subshifts considered in \Cref{T intrinsic ergodicity} do not satisfy the specification property of Bowen \cite{Bowen74} or the weaker specification property of Climenhaga and Thompson \cite{THOMPSON_3}.
Compare also \Cref{REM:beyond-specification} below.

\subsection{Gibbs properties of frequency measures}\label{4.7}

The proof of \Cref{T intrinsic ergodicity} follows a similar approach to the proof that the Parry measure is the unique measure of maximal entropy for irreducible shifts of finite type, due to Adler and Weiss \cite{adler-weiss}. An important feature of their proof is a Gibbs property, which states that for the measure of maximal entropy $\mu$, there exist constants $A,B > 0$ such that $A \me^{-\lvert u \rvert h} \leq \mu ([u]) \leq B \me^{-\lvert u \rvert h}$ for every legal word $u$, where $h$ denotes the topological entropy of the system. Such a Gibbs property does not hold for the subshifts considered in \Cref{T intrinsic ergodicity}. However, we can obtain a weaker Gibbs property for cylinder sets of exact inflation words. This is the content of \Cref{LEM: recog Gibbs property}, which utilises the following auxiliary results.

\begin{lemma}\label{LEM:mu-recursion}
Let $\vartheta_{\mathbf{P}} = (\vartheta, \mathbf{P})$ be a primitive random substitution with corresponding frequency measure $\mu_{\mathbf{P}}$. If for every $a_i \in \mathcal{A}$, the length $\lvert \vartheta (a_i) \rvert$ is well-defined, then for all $v \in \mathcal{L}_{\vartheta}$ and $w \in \vartheta(v)$,
	\begin{align*}
	\mu_{\mathbf{P}} ([w]) \geq  \frac{1}{\lambda} \mu_{\mathbf{P}} ([v]) \mathbb{P} [\vartheta_{\mathbf{P}} (v) = w].
	\end{align*} 
If in addition, $\vartheta_{\mathbf{P}}$ is recognisable and constant length, and $\lvert \vartheta(v) \rvert > 2 \kappa(\vartheta)$, then
	\begin{align*}
	\mu_{\mathbf{P}} ([w]) =  \frac{1}{\lambda}
	\sum_{u \in \mathcal L_{\vartheta}^{|v|}} \mu_{\mathbf{P}} ([u]) \mathbb{P} [\vartheta_{\mathbf{P}} (u) = w].
	\end{align*}
\end{lemma}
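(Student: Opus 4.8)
The plan is to prove the two statements separately, with the first (the inequality) serving as a warm-up that isolates a single term of the self-consistency relation established in Lemma~\ref{LEM:key}, and the second (the equality) exploiting recognisability to show that this single term in fact captures the whole sum, up to boundary corrections that vanish under the length hypothesis.

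For the inequality, I would start directly from Lemma~\ref{LEM:key}, which expresses $\mu_{\mathbf{P}}([w])$ as a sum over all $v' \in \mathcal{L}_\vartheta^{n}$ (with $n = |w|$) of nonnegative contributions. Since every summand is nonnegative, it suffices to retain a single well-chosen summand and discard the rest. The natural choice is the contribution coming from the specific word $v$ in the hypothesis, with the origin placed at the start of the inflation (i.e.\ $j=1$, corresponding to $w$ being produced as an exact image $\vartheta_{\mathbf{P}}(v) = w$ with no truncation). Tracking the bookkeeping in Lemma~\ref{LEM:key}, the relevant term is $\tfrac{1}{\lambda}\mu_{\mathbf{P}}([v])\,\mathbb{P}[\vartheta_{\mathbf{P}}(v) = w]$, which yields the claimed lower bound. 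Here one must be slightly careful that $|w| = |\vartheta(v)|$ is well-defined, which is exactly why the hypothesis that $|\vartheta(a_i)|$ is well-defined for each $a_i$ is imposed; this guarantees that $w \in \vartheta(v)$ forces $|w|$ to equal the common length $|\vartheta(v)|$ and that the decomposition index ranges as expected.

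For the equality, the idea is that recognisability turns the lower bound into an equality once $w$ is long enough to contain a full inflation word decomposition unambiguously. Under the constant-length and recognisability assumptions, Lemma~\ref{LEM:recog-DSC} gives the disjoint set condition, and local recognisability (Lemma~\ref{LEM:local-recog}) with radius $\kappa(\vartheta)$ tells us that once $|\vartheta(v)| > 2\kappa(\vartheta)$, the word $w$ uniquely determines both the preimage word $u \in \mathcal{L}_\vartheta^{|v|}$ and the position of the origin within the inflation. Concretely, I would return to the full sum in Lemma~\ref{LEM:key} and argue that, because $w = \vartheta_{\mathbf{P}}(u)$ is an \emph{exact} (untruncated) image of length $\ell|u|$, the only decompositions that can contribute are those with the origin aligned to the start of an inflation word ($j=1$), and that recognisability forces $u$ to be the unique recognisable preimage; the internal positions $j \neq 1$ and mismatched preimages contribute zero by the disjoint set condition and the window argument. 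Summing over the admissible $u$ then collapses the double sum in Lemma~\ref{LEM:key} to the single clean sum $\tfrac{1}{\lambda}\sum_{u}\mu_{\mathbf{P}}([u])\,\mathbb{P}[\vartheta_{\mathbf{P}}(u)=w]$.

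The main obstacle I anticipate is the boundary analysis in the equality: verifying rigorously that when $|\vartheta(v)| > 2\kappa(\vartheta)$ the recognisability window of radius $\kappa(\vartheta)$ fits strictly inside $w$, so that the inflation-word boundaries (and hence the correct preimage letters) are genuinely determined by $w$ alone, with no ambiguity leaking in from the unseen letters on either side. This requires carefully matching the recognisability radius against the length constraint and confirming that the terms with $j \neq 1$ or with a non-recognisable origin placement truly vanish rather than merely being small. The length hypothesis $|\vartheta(v)| > 2\kappa(\vartheta)$ is precisely what makes this clean, and I would expect the proof to hinge on invoking Lemma~\ref{LEM:local-recog} at the central position of $w$ to pin down the unique preimage, combined with the disjoint set condition from Lemma~\ref{LEM:recog-DSC} to eliminate the competing contributions.
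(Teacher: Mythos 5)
Your proposal matches the paper's proof in both structure and substance: each part rests on \Cref{LEM:key}, the inequality by discarding all terms of the sum except those indexed by words $u \in \mathcal{L}_{\vartheta}^{|w|}$ with $u_{[1,|v|]} = v$ and $j = 1$ (note these form a \emph{sub-sum}, not a single summand, which collapses to $\lambda^{-1}\mu_{\mathbf{P}}([v])\,\mathbb{P}[\vartheta_{\mathbf{P}}(v)=w]$ because $\sum_{u}\mu_{\mathbf{P}}([u]) = \mu_{\mathbf{P}}([v])$ and, by the well-defined inflation-word lengths, the probability depends only on the prefix $v$), and the equality by using recognisability to force $j=1$ and exactness of the image. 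One correction: your assertion that $w$ uniquely determines the preimage word $u \in \mathcal{L}_{\vartheta}^{|v|}$ is false, and the paper explicitly flags this --- local recognisability with $|\vartheta(v)| > 2\kappa(\vartheta)$ pins down only the inflation-word \emph{decomposition} of $w$ (the cut positions), while the preimage letters within distance $\kappa(\vartheta)$ of the ends of $w$ can remain ambiguous, so several $u$ with $w \in \vartheta(u)$ may exist. This does not damage your argument, since the claimed identity sums over all $u \in \mathcal{L}_{\vartheta}^{|v|}$ and non-preimages contribute zero, but the uniqueness claim should be dropped.
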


\begin{proof}
 Let $v \in \mathcal{L}_{\vartheta}$ and let $w \in \vartheta(v)$ be fixed. 
Let $n = |w|$ and $\mathcal{J}_{n}(v) = \{ u \in \mathcal{L}_{\vartheta}^{n} : u_{[1, \lvert v \rvert]} = v \}$. 
Since we assumed that the lengths of inflation words are well-defined, the relation in \Cref{LEM:key} simplifies to
\[
\mu_{\mathbf{P}} ([w])  = \frac{1}{\lambda} \sum_{u \in \mathcal{L}_{\vartheta}^{n}} \mu_{\mathbf{P}} ([u]) \sum_{j = 1}^{\lvert \vartheta(u_1) \rvert} \mathbb{P} [\vartheta_{\mathbf{P}} (u)_{[j, j + \lvert w \rvert - 1]} = w] 
\]
Using that $[v]$ is the union of all $[u]$ with $u \in \mathcal{J}_{n}(v)$ we thereby obtain
	\begin{align*}
	\mu_{\mathbf{P}} ([w])  & \geq \frac{1}{\lambda} \sum_{u \in \mathcal{J}_{n}(v)} \mu_{\mathbf{P}} ([u]) \mathbb{P} [\vartheta_{\mathbf{P}} (u)_{[1,\lvert w \rvert]} = w]
	= \frac{1}{\lambda} \sum_{u \in \mathcal{J}_{n}(v)} \mu_{\mathbf{P}} ([u]) \mathbb{P} [\vartheta_{\mathbf{P}} (v) = w] 
\\	& = \frac{1}{\lambda} \mu_{\mathbf{P}} ([v]) \mathbb{P} [\vartheta_{\mathbf{P}} (v) = w].
	\end{align*}
If $\vartheta_{\mathbf{P}}$ is recognisable and of constant length, and $\lvert \vartheta (v) \rvert > 2 \kappa (\vartheta)$, then there is a unique way to decompose $w$ into inflation words. 
However, there might still be several words $u \in \mathcal L_{\vartheta}$ with $|u| = |v|$ such that $w \in \vartheta(u)$.
\Cref{LEM:key} yields 
	\[
\mu_{\mathbf{P}} ([w]) =  \frac{1}{\lambda}
	\sum_{u \in \mathcal L_{\vartheta}^{|v|}} \mu_{\mathbf{P}} ([u]) \mathbb{P} [\vartheta_{\mathbf{P}} (u) = w].
	\qedhere
	\]
\end{proof}

\begin{lemma}\label{LEM:P-estimate}
Let $\vartheta_{\mathbf{P}} = (\vartheta, \mathbf{P})$ be a primitive random substitution satisfying the disjoint set condition. Assume that $\mathbb{P} [\vartheta_{\mathbf{P}} (a) = s] = 1/\# \vartheta (a)$ for all $a \in \mathcal{A}$ and $s \in \vartheta (a)$ and that at least one of the following conditions is satisfied:
	\begin{itemize}
	\item[(i)] $\vartheta_{\mathbf{P}}$ is of constant length $\ell$ and $\# \vartheta(a) = \# \vartheta(b)$ for all $a, b \in \mathcal{A}$;
	\item[(ii)] $\vartheta_{\mathbf{P}}$ is compatible and the second largest eigenvalue $\tau$ of the substitution matrix satisfies $\lvert \tau \rvert < 1$.
	\end{itemize}
Under these hypotheses, there exists a constant $c > 0$ such that $\mathbb{P}[\vartheta_{\mathbf{P}}^{m}(a) = w] \geq c \me^{- \lvert w \rvert h_{\text{top}}(X_{\vartheta})}$ for all $m \in \mathbb{N}$, $a \in \mathcal{A}$ and $w \in \vartheta^{m}(a)$. In particular, when $\vartheta_{\mathbf{P}}$ is of constant length, we have that $\mathbb{P} [\vartheta_{\mathbf{P}}^m (a) = w] = \me^{h_{\text{top}}(X_{\vartheta})} \me^{-|w| h_{\text{top}}(X_{\vartheta})}$.
\end{lemma}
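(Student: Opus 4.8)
The plan is to combine the disjoint set condition with unique realisation paths to show that each $w \in \vartheta^m(a)$ is produced by a \emph{single} substitution history, and then to read off $\mathbb{P}[\vartheta_{\mathbf{P}}^m(a) = w]$ as the product of the (uniform) one-step probabilities along that history. Note first that in both cases unique realisation paths hold: a constant-length or compatible primitive random substitution is geometrically compatible, and every primitive geometrically compatible random substitution has unique realisation paths. Conditioning on the first inflation step, the disjoint set condition forces a unique $s = s_1 \cdots s_p \in \vartheta(a)$ with $w \in \vartheta^{m-1}(s)$, while unique realisation paths yield a unique decomposition $w = w^{(1)} \cdots w^{(p)}$ with $w^{(i)} \in \vartheta^{m-1}(s_i)$. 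Using independence of the letterwise images and $\mathbb{P}[\vartheta_{\mathbf{P}}(b) = \cdot\,] = 1/\#\vartheta(b)$, this gives the recursion
\[
\mathbb{P}[\vartheta_{\mathbf{P}}^m(a) = w] = \frac{1}{\#\vartheta(a)} \prod_{i=1}^{p} \mathbb{P}[\vartheta_{\mathbf{P}}^{m-1}(s_i) = w^{(i)}].
\]
Unrolling the recursion, I would obtain
\[
- \log \mathbb{P}[\vartheta_{\mathbf{P}}^m(a) = w] = \sum_{k=0}^{m-1} \sum_{b \in \mathcal{A}} \lvert x^{(k)} \rvert_b \log(\# \vartheta(b)),
\]
where $x^{(0)} = a, x^{(1)}, \ldots, x^{(m)} = w$ is the unique history; in particular this quantity is independent of which $w \in \vartheta^m(a)$ is produced.

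For case (i), since $\# \vartheta(b) = N$ for every $b$, only the total number of substitution events at each level matters, and constant length forces $\lvert x^{(k)} \rvert = \ell^k$. Hence $- \log \mathbb{P}[\vartheta_{\mathbf{P}}^m(a) = w] = (\sum_{k=0}^{m-1} \ell^k) \log N = \frac{\ell^m - 1}{\ell - 1} \log N$. Combining this with $\lvert w \rvert = \ell^m$ and the identity $h_{\text{top}}(X_{\vartheta}) = \frac{1}{\ell - 1}\log N$ from \Cref{LEM:noncompatible top entropy}, a direct rearrangement gives the exact identity $\mathbb{P}[\vartheta_{\mathbf{P}}^m(a) = w] = \me^{h_{\text{top}}(X_{\vartheta})} \me^{- \lvert w \rvert h_{\text{top}}(X_{\vartheta})}$, which both proves the final ``in particular'' claim and furnishes the lower bound with $c = \me^{h_{\text{top}}(X_{\vartheta})}$.

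For case (ii), compatibility makes the abelianisation of $x^{(k)}$ deterministic, namely $\lvert x^{(k)} \rvert_b = (M^k)_{b,a}$, so that $- \log \mathbb{P}[\vartheta_{\mathbf{P}}^m(a) = w] = \mathbf{q}_1^{\top} \bigl( \sum_{k=0}^{m-1} M^k \bigr) e_a$. I would then use the spectral splitting $M^k = \lambda^k \mathbf{R} \mathbf{L}^{\top} + (M')^k$, where $M'$ carries the non-Perron part of the spectrum; the hypothesis $\lvert \tau \rvert < 1$ guarantees that $\sum_{k \geq 0} (M')^k$ converges, so its contribution is bounded uniformly in $m$ and $a$. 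Since $\mathbf{q}_1^{\top} \mathbf{R} = (\lambda - 1) h_{\text{top}}(X_{\vartheta})$ by \Cref{C IS DS topological entropy} and $\lvert w \rvert = \lvert \vartheta^m(a) \rvert = \lambda^m L_a + O(\tau^m)$ by \Cref{L letter frequencies}, the Perron part equals $(\lambda^m - 1) L_a\, h_{\text{top}}(X_{\vartheta}) = (\lvert w \rvert + O(1)) h_{\text{top}}(X_{\vartheta})$. Altogether this yields $- \log \mathbb{P}[\vartheta_{\mathbf{P}}^m(a) = w] = \lvert w \rvert h_{\text{top}}(X_{\vartheta}) + O(1)$ uniformly, and exponentiating the resulting upper bound on $- \log \mathbb{P}$ produces a constant $c > 0$ as required.

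The routine part is the combinatorial bookkeeping: the factorisation and, in case (ii), the deterministic event counts follow immediately once the disjoint set condition and unique realisation paths are in place. The hard part will be making the $O(1)$ control in case (ii) genuinely uniform, ensuring that the geometric-series remainder from the non-Perron spectrum and the length defect $\lvert \vartheta^m(a) \rvert - \lambda^m L_a$ stay simultaneously bounded over all $m$, all $a \in \mathcal{A}$, and all $w \in \vartheta^m(a)$, so that the extracted constant $c$ does not degenerate. This quantitative estimate is exactly where the spectral-gap hypothesis $\lvert \tau \rvert < 1$ is essential.
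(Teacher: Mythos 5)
Your proposal is correct and follows essentially the same route as the paper: the paper first shows by induction under the disjoint set condition that $\mathbb{P}[\vartheta_{\mathbf{P}}^m(a)=w]=1/\#\vartheta^m(a)$ and then evaluates $\log(\#\vartheta^m(a))$, which is exactly the quantity you obtain by unrolling the product of one-step uniform probabilities along the unique history; your identity $-\log\mathbb{P}[\vartheta_{\mathbf{P}}^m(a)=w]=\mathbf{q}_1^{\top}\bigl(\sum_{k=0}^{m-1}M^k\bigr)e_a$ is the relation $\mathbf{q}_m^{\top}=\mathbf{q}_1^{\top}\sum_{k=0}^{m-1}M^k$ that the paper imports from \cite{gohlke}, and the remaining spectral-splitting and length estimates in case (ii), as well as the exact computation in case (i), coincide with the paper's.
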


\begin{proof}
As $\vartheta_{\mathbf{P}}$ satisfies the disjoint set condition, by induction, for $a \in \mathcal{A}$, $m \in \mathbb{N}$ and $w \in \vartheta^{m} (a)$,
	\begin{equation}\label{EQ: uniform distribution probs}
	\mathbb{P}[\vartheta_{\mathbf{P}}^{m} (a) = w] = \frac{1}{\# \vartheta^{m}(a)}.
	\end{equation} 
Let us first consider case (i). Since $\vartheta_{\mathbf{P}}$ satisfies the disjoint set condition, we have $\# \vartheta^{m}(a) = \# \vartheta^{m}(b)$ for all $a, b \in \mathcal{A}$. Hence, by \Cref{C IS DS topological entropy}, and since the right Perron--Frobenius eigenvector $\mathbf{R}$ of $\vartheta_{\mathbf{P}}$ is normalised so that $\lVert \mathbf{R} \rVert_{1} = 1$, we have
	\begin{align*}
	\log(\# \vartheta^{m}(a))
	= \sum_{b \in \mathcal{A}} R_b \log(\#\vartheta^{m}(b))
	= (\ell^{m} - 1)h_{\text{top}}(X_{\vartheta})  = \lvert \vartheta^{m}(a) \rvert h_{\text{top}}(X_{\vartheta}) - h_{\text{top}}(X_{\vartheta}).
	\end{align*}
Taking the exponential of both sides, we conclude from \eqref{EQ: uniform distribution probs} that $\mathbb{P} [\vartheta_{\mathbf{P}} (a) = w] = \me^{h_{\text{top}}(X_{\vartheta})} \me^{-|w| h_{\text{top}}(X_{\vartheta})}$.
Let us now consider case (ii).  Since the Perron--Frobenius eigenvalue $\lambda$ of $\vartheta_{\mathbf{P}}$ is simple, we can split the substitution matrix $M$ as $M = \lambda \mathbf{R} \mathbf{L}^{\top} + N$, where $\mathbf{R}$ and $\mathbf{L}$ are respectively the right and left Perron--Frobenius eigenvectors of $\vartheta_{\mathbf{P}}$ and where $N \mathbf{R} \mathbf{L}^{\top} = 0 = \mathbf{R} \mathbf{L}^{\top} N$. Since $\vartheta_{\mathbf{P}}$ satisfies the disjoint set condition, it follows by \cite[Lemma 10]{gohlke}, that $\mathbf{q}_{m}^{\top} = \mathbf{q}_{1}^{\top} \sum_{k=0}^{m-1} M^{k}$, for all $m \in \mathbb{N}$, and where $\mathbf{q}_{m}$ is as defined in \eqref{eq:refece_page_17}.  Hence,
	\begin{align*}
	\mathbf{q}_{m}^{\top}
	= \mathbf{q}_{1}^{\top} \sum_{k=0}^{m-1} M^{k}
	&= \mathbf{q}_1^{\top} \sum_{k = 0}^{m-1} \lambda^{k} \mathbf{R} \mathbf{L}^{\top} + \mathbf{q}_1^{\top} \sum_{k=0}^{m-1} N^{k}\\
	&= \frac{\lambda^{m} - 1}{\lambda - 1} \mathbf{q}_1^{\top} \mathbf{R} \mathbf{L}^{\top} + \mathbf{q}_1^{\top} \sum_{k=0}^{m-1} N^{k}
	 = (\lambda^{m} - 1) h_{\text{top}}(X_{\vartheta}) \mathbf{L}^{\top} + \mathbf{q}_1^{\top} \sum_{k=0}^{m-1} N^{k}.
	\end{align*}
By construction, $\tau$ is the dominant eigenvalue of $N$, and so there exists a $c > 0$ and $n \in \N$ such that $\lVert N^{k} \rVert_{\infty} < c k^n \lvert \tau \rvert^{k}$ for all $k \in \mathbb{N}$. Hence, there is $r \in \R$ with $|\tau| < r < 1$ such that $\lVert N^{k} \rVert_{\infty} < c r^k$. We therefore obtain
	\begin{align*}
	\log(\# \vartheta^{m}(a)) = q_{m,a} \!
	\leq (\lambda^{m} - 1)L_{a} h_{\text{top}}(X_{\vartheta}) + \! \lVert \mathbf{q}_1 \rVert_{\infty} \! \sum_{k = 0}^{m-1} \lVert N^{k} \rVert_{\infty}
	\leq (\lambda^{m} - 1)L_{a} h_{\text{top}}(X_{\vartheta}) + \! \frac{c}{1 - { r }} \lVert \mathbf{q}_1 \rVert_{\infty},
	\end{align*}
where $q_{m,a}$ is as defined in \eqref{eq:refece_page_17}. On the other hand, by \Cref{L letter frequencies}, we have that
	\begin{align*}
	\lvert \vartheta^{m}(a) \rvert \geq L_{a} \lambda^{m} - D \lvert \tau \rvert^{m} \geq L_{a} \lambda^{m} - D,
	\end{align*} 
for some $D>0$. Hence, there exists a constant $C >0$ such that $\log(\# \vartheta^{m}(a)) \leq \lvert \vartheta^{m}(a) \rvert  h + C$. Taking the exponential of both sides, we conclude from \eqref{EQ: uniform distribution probs} that $\mathbb{P} [\vartheta_{\mathbf{P}}^m (a) = w] \geq \me^{-|w| h} \me^{-C}$. Setting $c = e^{-C}$ completes the proof. If $\vartheta_{\mathbf{P}}$ is additionally assumed to be of constant length, then $\tau = 0$ since the eigenvalues of the matrix associated to a constant length substitution are integers. In this case, the matrix $M$ satisfies $M = \lambda \mathbf{R} \mathbf{L}^{\top}$, where $\mathbf{L} = (1, \ldots, 1)$ by the constant length property. Thus, it follows by the same arguments as above that $\log (\# \vartheta^{m} (a)) = (\lambda^m - 1) h_{\text{top}}(X_{\vartheta})$. Taking the exponential of both sides, it follows from \eqref{EQ: uniform distribution probs} that $\mathbb{P} [\vartheta_{\mathbf{P}}^m (a) = w] = \me^{h_{\text{top}}(X_{\vartheta})} \me^{-|w| h_{\text{top}}(X_{\vartheta})}$.
\end{proof}

\begin{lemma}\label{LEM:mu-bound}
If $\vartheta_{\mathbf{P}} = (\vartheta, \mathbf{P})$ satisfies either of the conditions of \Cref{LEM:P-estimate}, and if $\mu_{\mathbf{P}}$ denotes the corresponding frequency measure, then there exists a constant $c >0$ such that
	\begin{align*}
	\mu_{\mathbf{P}} ([w]) \geq \mu_{\mathbf{P}} ([v]) \frac{c^{\lvert v \rvert}}{\lvert w \rvert \me^{\lvert w \rvert h_{\text{top}}(X_{\vartheta})}}
	\end{align*} 
for all $v \in \mathcal{L}_{\vartheta}$, $m \in \mathbb{N}$ and $w \in \vartheta^{m} (v)$. If, in addition, $\vartheta_{\mathbf{P}}$ is constant length and recognisable and $\lvert v \rvert >  2 \kappa (\vartheta)$, then
	\begin{align*}
	\mu_{\mathbf{P}} ([w]) \leq \frac{\lvert v \rvert \me^{\lvert v \rvert h_{\text{top}}(X_{\vartheta})}}{\lvert w \rvert \me^{\lvert w \rvert h_{\text{top}}(X_{\vartheta})}} \text{.}
	\end{align*} 
\end{lemma}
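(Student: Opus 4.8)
The plan is to prove both inequalities by transporting the renormalisation identity of \Cref{LEM:mu-recursion} to the power $\vartheta^m_{\mathbf{P}}$, which is again a primitive random substitution with the same subshift $X_{\vartheta}$, the same frequency measure $\mu_{\mathbf{P}}$, and Perron--Frobenius eigenvalue $\lambda^m$, and then feeding in the sharp probability estimates from \Cref{LEM:P-estimate}. Throughout I abbreviate $h = h_{\text{top}}(X_{\vartheta})$, and I note that under either hypothesis of \Cref{LEM:P-estimate} the inflation-word lengths $|\vartheta^m(a)|$ are well-defined, so the recursion applies to every power.

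For the lower bound I would apply the inequality part of \Cref{LEM:mu-recursion} to $\vartheta^m_{\mathbf{P}}$ to get $\mu_{\mathbf{P}}([w]) \geq \lambda^{-m} \mu_{\mathbf{P}}([v]) \, \mathbb{P}[\vartheta^m_{\mathbf{P}}(v) = w]$ for $w \in \vartheta^m(v)$. Writing $v = v_1 \cdots v_n$ with $n = |v|$ and fixing a decomposition $w = w^{(1)} \cdots w^{(n)}$ with $w^{(i)} \in \vartheta^m(v_i)$, independence of the letterwise images yields $\mathbb{P}[\vartheta^m_{\mathbf{P}}(v) = w] \geq \prod_{i=1}^n \mathbb{P}[\vartheta^m_{\mathbf{P}}(v_i) = w^{(i)}]$, and \Cref{LEM:P-estimate} bounds each factor below by $c\,\me^{-|w^{(i)}|h}$, whose product telescopes to $c^{|v|} \me^{-|w|h}$. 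It then remains to compare $\lambda^{-m}$ with $|w|^{-1}$: in the constant-length case $|w| = \ell^m |v|$, so $\lambda^{-m} = |v|/|w| \geq |w|^{-1}$, while in the compatible case \Cref{L letter frequencies} gives $|w| \geq c''\lambda^m$ for a uniform $c'' > 0$. Since $|v| \geq 1$, the resulting bounded factor can be absorbed into the base of $c^{|v|}$ at the cost of renaming the constant, giving the claimed lower bound.

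For the upper bound I would instead invoke the equality part of \Cref{LEM:mu-recursion} for $\vartheta^m_{\mathbf{P}}$. The key preliminary is that $|v| > 2\kappa(\vartheta)$ forces $|\vartheta^m(v)| = \ell^m|v| > 2\kappa(\vartheta^m)$ for \emph{every} $m$: combining $\kappa(\vartheta^m) \leq \tfrac{\ell^m-1}{\ell-1}\kappa(\vartheta)$ from \Cref{LEM:recognisability-recursion} with $\ell \geq 2$ shows $2\kappa(\vartheta^m)/\ell^m < 2\kappa(\vartheta) < |v|$. The identity then reads $\mu_{\mathbf{P}}([w]) = \lambda^{-m}\sum_{u \in \mathcal{L}_\vartheta^{|v|}} \mu_{\mathbf{P}}([u])\,\mathbb{P}[\vartheta^m_{\mathbf{P}}(u) = w]$. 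Because $\vartheta_{\mathbf{P}}$ is constant length, each nonzero $\mathbb{P}[\vartheta^m_{\mathbf{P}}(u) = w]$ factorises uniquely over the $|v|$ length-$\ell^m$ blocks of $w$, so by the constant-length clause of \Cref{LEM:P-estimate} it equals $\prod_{i} \me^{h}\me^{-|w^{(i)}|h} = \me^{|v|h}\me^{-|w|h}$; since the cylinders $\{[u] : u \in \mathcal{L}_\vartheta^{|v|}\}$ partition $X_{\vartheta}$, the residual sum $\sum_u \mu_{\mathbf{P}}([u]) \leq 1$. Using $\lambda^m = \ell^m = |w|/|v|$ this yields exactly $\mu_{\mathbf{P}}([w]) \leq \tfrac{|v|}{|w|}\me^{|v|h}\me^{-|w|h}$, as claimed.

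The routine content is the probabilistic bookkeeping and constant tracking; the main obstacle I anticipate is the \emph{uniform} verification of the recognisability condition across all powers, i.e.\ passing from the single hypothesis $|v| > 2\kappa(\vartheta)$ to $|\vartheta^m(v)| > 2\kappa(\vartheta^m)$ for all $m$, which is precisely where \Cref{LEM:recognisability-recursion} is indispensable and which licenses the exact recursion at every scale. A secondary subtlety is the careful uniform absorption of the mismatch between $\lambda^{-m}$ and $|w|^{-1}$ into $c^{|v|}$ in the compatible, non-constant-length case.
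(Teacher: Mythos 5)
Your proposal is correct and follows the paper's own proof essentially step for step: both bounds come from applying \Cref{LEM:mu-recursion} to $\vartheta_{\mathbf{P}}^{m}$, factorising the transition probability over the letter-level inflation words and invoking \Cref{LEM:P-estimate}, with \Cref{L letter frequencies} (or the exact relation $\ell^{-m}=|v|/|w|$) handling the comparison of $\lambda^{-m}$ with $|w|^{-1}$ and \Cref{LEM:recognisability-recursion} supplying the uniform bound $|\vartheta^{m}(v)|>2\kappa(\vartheta^{m})$ needed for the equality case. The only cosmetic difference is that you write the factorisation of $\mathbb{P}[\vartheta_{\mathbf{P}}^{m}(v)=w]$ as an inequality where the paper writes an equality (justified there by unique realisation paths), which is harmless for the lower bound.
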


\begin{proof}
Let $v \in \mathcal{L}_{\vartheta}$, $m \in \mathbb{N}$ and $w \in \vartheta^{m}(v)$ be fixed. Applying \Cref{LEM:mu-recursion} to $\vartheta_{\mathbf{P}}^{m}$ yields
	\begin{align}\label{EQN:mu bound}
	\mu_{\mathbf{P}} ([w]) \geq \frac{1}{\lambda^{m}} \mu_{\mathbf{P}} ([v]) \mathbb{P}[\vartheta_{\mathbf{P}}^{m} (v) = w].
	\end{align}
Since $\vartheta_{\mathbf{P}}$ is compatible or constant length, we can decompose $w$ into subwords $w = w^{(1)} \cdots w^{(\lvert v \rvert)}$ such that $w^{(j)} \in \vartheta^{m} (v_{j})$ for all $j \in \{ 1, \ldots, \lvert v \rvert \}$. Hence, it follows by \Cref{LEM:P-estimate} that there is a constant $c > 0$ such that
	\begin{align}\label{eq:tripple_star}
	\mathbb{P}[\vartheta_{\mathbf{P}}^{m} (v) = w]
	= \prod_{j = 1}^{\lvert v \rvert} \mathbb{P}[\vartheta_{\mathbf{P}}^{m} (v_j) = w^{(j)}]
	\geq \prod_{j=1}^{\lvert v \rvert} c \, \me^{-\lvert w^{(j)} \rvert h_{\text{top}}(X_{\vartheta})} = c^{\lvert v \rvert} \me^{-\lvert w \rvert h_{\text{top}}(X_{\vartheta})}.
	\end{align} 
By \Cref{L letter frequencies}, there is a universal constant $D > 0$ such that $\lambda^{m} \leq D \lvert \vartheta^{m}(a) \rvert$ for all $m \in \mathbb{N}$ and $a \in \mathcal{A}$. Combining this with \eqref{EQN:mu bound} and \eqref{eq:tripple_star} yields the required result.

Now, assume additionally that $\vartheta_{\mathbf{P}}$ is recognisable and of constant length $\ell$. Then by \Cref{LEM:P-estimate} we have that $\mathbb{P} [\vartheta_{\mathbf{P}}^{m} (u) = w] = \me^{\lvert u \rvert h_{\text{top}}(X_{\vartheta})} \me^{-\lvert w \rvert h_{\text{top}}(X_{\vartheta})}$ for every $u \in \mathcal{L}_{\vartheta}^{|v|}$ with $w \in \vartheta(u)$. Thus, the lower bound follows by identical arguments to the above, taking $c = e^{h_{\text{top}}(X_{\vartheta})}$. For the upper bound, observe that if $\lvert v \rvert > 2 \kappa (\vartheta)$, we also have $|\vartheta^m(v)| = \ell^m |v| > 2 \kappa (\vartheta^m)$, for all $m \in \mathbb{N}$ by \Cref{LEM:recognisability-recursion}. Hence, noting that $|u| = |v|$ and $\ell^{-m} = \lvert v \rvert/\lvert w \rvert$, we find by \Cref{LEM:mu-recursion},
\[
\mu_{\mathbf{P}} ([w]) =  \frac{1}{\ell^m}
	\sum_{u \in \mathcal L_{\vartheta}^{|v|}} \mu_{\mathbf{P}} ([u]) \mathbb{P} [\vartheta^m_{\mathbf{P}} (u) = w]
	\leq  \frac{\lvert v \rvert \me^{\lvert v \rvert h_{\text{top}}(X_{\vartheta})}}{ \lvert w \rvert \me^{\lvert w \rvert h_{\text{top}}(X_{\vartheta})}}.
	\qedhere
\]
\end{proof}

In the proof of \Cref{T intrinsic ergodicity} we only require the lower bound of \Cref{LEM:mu-bound}. However, the upper bound allows us to show that the subshifts we consider in \Cref{T intrinsic ergodicity} do not satisfy the Gibbs property, therefore do not satisfy the specification property of \cite{Bowen74}.
Instead, these subshifts satisfy the following Gibbs-like property.

\begin{lemma}\label{LEM: recog Gibbs property}
Let $\vartheta_{\mathbf{P}}$ be a random substitution satisfying the conditions of \Cref{T intrinsic ergodicity}, and let $\mu_{\mathbf{P}}$ denote the corresponding frequency measure. Then there exist constants $c_1, c_2 > 0$ such that for all $a \in \mathcal{A}$, $m \in \mathbb{N}$ and $w \in \vartheta^m (a)$,
\begin{equation*}
    \frac{c_1}{|w|} \me^{-|w| h_{\text{top}}(X_{\vartheta})} \leq \mu_{\mathbf{P}} ([w]) \leq \frac{c_2}{|w|} \me^{-|w| h_{\text{top}}(X_{\vartheta})} \text{.}
\end{equation*}
\end{lemma}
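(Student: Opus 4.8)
Throughout write $h = h_{\text{top}}(X_{\vartheta})$. The plan is to deduce both inequalities from \Cref{LEM:mu-bound}, after first checking that the hypotheses of \Cref{T intrinsic ergodicity} place us within the scope of that lemma. Recognisability gives the disjoint set condition by \Cref{LEM:recog-DSC}, the relevant measure is the frequency measure for uniform probabilities, and in case (i) of \Cref{T intrinsic ergodicity} we land in setting (i) of \Cref{LEM:P-estimate}, while in case (ii) compatibility together with $\ell$ being the only non-zero eigenvalue forces the second-largest eigenvalue modulus $\tau = 0 < 1$, putting us in setting (ii). Since $\vartheta_{\mathbf{P}}$ is moreover constant length and recognisable, both the lower and the upper bound of \Cref{LEM:mu-bound} are available.

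For the lower bound I would apply the first inequality of \Cref{LEM:mu-bound} with $v = a$ a single letter, so that $\lvert v \rvert = 1$ and $\mu_{\mathbf{P}}([a]) = R_a$ by \eqref{eq:freq_PF_right}. This yields $\mu_{\mathbf{P}}([w]) \geq R_a \, c \, \lvert w \rvert^{-1} \me^{-\lvert w \rvert h}$ for every $w \in \vartheta^m(a)$, with $c$ the constant from \Cref{LEM:mu-bound}. As $\mathbf{R}$ has strictly positive entries and $\mathcal{A}$ is finite, setting $c_1 = c \min_{a \in \mathcal{A}} R_a > 0$ finishes this half.

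The upper bound is the delicate part, because the corresponding inequality in \Cref{LEM:mu-bound} requires a seed word $v$ with $\lvert v \rvert > 2\kappa(\vartheta)$, a constraint no single letter meets. The remedy is to factor $w$ through an intermediate generation. Let $k_0 = \min\{ k \in \N : \ell^k > 2\kappa(\vartheta) \}$. For $m \geq k_0$, the semigroup property of the substitution Markov matrix $Q$ (namely $(Q^m)_{a,w} = \sum_{v'} (Q^{k_0})_{a,v'} (Q^{m-k_0})_{v',w}$) furnishes some $v' \in \vartheta^{k_0}(a)$ with $w \in \vartheta^{m-k_0}(v')$; as $\lvert v' \rvert = \ell^{k_0} > 2\kappa(\vartheta)$, the upper bound of \Cref{LEM:mu-bound} applied to $v'$ with power $m-k_0$ gives $\mu_{\mathbf{P}}([w]) \leq \ell^{k_0} \me^{\ell^{k_0} h} \lvert w \rvert^{-1} \me^{-\lvert w \rvert h}$. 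For the finitely many pairs $(a,m)$ with $m < k_0$, every admissible $w$ has $\lvert w \rvert = \ell^m \leq \ell^{k_0-1}$, and the trivial estimate $\mu_{\mathbf{P}}([w]) \leq 1$ together with $\lvert w \rvert \me^{\lvert w \rvert h} \leq \ell^{k_0-1} \me^{\ell^{k_0-1} h}$ suffices. Taking $c_2 = \ell^{k_0} \me^{\ell^{k_0} h}$ then covers both regimes uniformly.

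The main obstacle is exactly this length restriction in the upper bound of \Cref{LEM:mu-bound}: one cannot seed directly from a letter. The key idea that unlocks the argument is to realise $w$ as a higher inflation of an ancestor $v'$ of length exceeding $2\kappa(\vartheta)$ inside its own inflation history, while absorbing the finitely many low generations into the constant $c_2$.
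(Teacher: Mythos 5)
Your proof is correct and follows essentially the same route as the paper's: the lower bound comes from seeding \Cref{LEM:mu-bound} at a single letter, and the upper bound from factoring $w$ through a level-$k_0$ ancestor of length $\ell^{k_0} > 2\kappa(\vartheta)$ while absorbing the finitely many low generations into the constant $c_2$. The only cosmetic difference is that the paper splits the cases as $|w| \leq \ell^{k_0}$ versus $m > k_0$, which sidesteps the degenerate power-zero application of \Cref{LEM:mu-bound} at $m = k_0$; your choice of $c_2$ makes that boundary case trivially true anyway.
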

\begin{proof}
The lower bound follows immediately from \Cref{LEM:mu-bound}, taking $c_1 = \min_{a \in \mathcal{A}} c \mu_{\mathbf{P}} ([a])$ where $c$ is the constant given by \Cref{LEM:mu-bound}. For the upper bound, let $M$ be the least integer such that $\ell^{M} > 2 \kappa (\vartheta)$ and set $c_2 = \max_{u \in \mathcal{L}_{\vartheta}, \, |u| \leq \ell^{M}} \lvert u \rvert \me^{\lvert u \rvert h_{\text{top}}(X_{\vartheta})}$. Clearly $\mu_{\mathbf{P}} ([w]) \leq c_2 \me^{-\lvert w \rvert h_{\text{top}}(X_{\vartheta})} / \lvert w \rvert$ if $\lvert w \rvert \leq \ell^{M}$, since $\mu_{\mathbf{P}}$ is a probability measure. On the other hand, if $m > M$ and $w \in \vartheta^{m} (a)$ then it follows by \Cref{LEM:mu-bound} that there is a $v \in \vartheta^{M} (a)$ such that
\begin{equation*}
    \mu_{\mathbf{P}} ([w]) \leq \frac{\lvert v \rvert \me^{\lvert v \rvert h_{\text{top}}(X_{\vartheta})}}{\lvert w \rvert \me^{\lvert w \rvert h_{\text{top}}(X_{\vartheta})}} \leq \frac{c_2}{\lvert w \rvert} \me^{-\lvert w \rvert h_{\text{top}}(X_{\vartheta})} \text{.}
\end{equation*}
\end{proof}

\begin{remark}
\label{REM:beyond-specification}
The upper bound on $\mu_{\mathbf{P}}$ in \Cref{LEM: recog Gibbs property} is irreconcilable with the bound for the unique measure of maximal entropy on subshifts with a weak specification property established in \cite[Lemma~5.12]{THOMPSON_3}. For the subshifts $X_{\vartheta}$ with random substitutions as in \Cref{T intrinsic ergodicity}, $\mu_{\mathbf{P}}$ (with $\mathbf{P}$ the uniform distribution) is the unique measure of maximal entropy. Hence, each such $X_{\vartheta}$ does not satisfy the weak specification property in \cite{THOMPSON_3}. In particular, \Cref{T intrinsic ergodicity} establishes intrinsic ergodicity for subshifts beyond the more classical context of subshifts with (weak) specification.
\end{remark}

\subsection{Proof of \Cref{T intrinsic ergodicity}}\label{SS intrinsic ergodicity proof}

We now present the proof of \Cref{T intrinsic ergodicity}. In addition to the Gibbs property proved in the previous section, we also utilise the following result, which is proved in \cite{einsiedler-lindenstrauss-ward}.

\begin{lemma}[\cite{einsiedler-lindenstrauss-ward}, Lemma 8.8]\label{LEM:approx}
Let $(X,d)$ be a compact metric space and let $\varrho$ be a Borel probability measure on $X$. If $B \subset X$ is measurable and $(\xi_{n})_{n \in \mathbb{N}}$ is a sequence of finite measurable partitions of $X$ for which $\lim_{n \to \infty} \max_{P \in \xi_n} \operatorname{diam}(P) = 0$, then there exists a sequence of  sets $(A_{n})_{n \in \mathbb{N}}$ with $A_{n} \in \sigma(\xi_{n})$ and $\lim_{n \to \infty} \varrho(A_{n} \triangle B) = 0$. Here, $\sigma(\xi_{n})$ denotes the sigma algebra generated by the partition $\xi_{n}$.
\end{lemma}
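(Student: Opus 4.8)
The plan is to exploit the regularity of Borel probability measures on compact metric spaces together with the hypothesis that the mesh of the partitions tends to zero. Recall that every Borel probability measure $\varrho$ on a compact metric space is inner regular with respect to closed sets and outer regular with respect to open sets. Thus, given the measurable set $B$ and any $\varepsilon > 0$, I would first choose a closed set $F$ and an open set $U$ with $F \subseteq B \subseteq U$ and $\varrho(U \setminus F) < \varepsilon$.

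The geometric core comes next. I would set $\delta = d(F, X \setminus U)$ whenever both $F$ and $X \setminus U$ are non-empty (with the convention $\delta = + \infty$ otherwise); since $F$ and $X \setminus U$ are then disjoint compact sets, $\delta > 0$. Choosing $n$ large enough that $\max_{P \in \xi_n} \operatorname{diam}(P) < \delta$, I would define
\[
A_n^{\varepsilon} = \bigcup \{ P \in \xi_n : P \cap F \neq \varnothing \}.
\]
Then $A_n^{\varepsilon} \in \sigma(\xi_n)$ and $F \subseteq A_n^{\varepsilon}$ by construction. Moreover, every cell $P$ appearing in the union meets $F$ and has diameter less than $\delta$, so $P$ is contained in the open $\delta$-neighbourhood of $F$, which lies inside $U$; hence $A_n^{\varepsilon} \subseteq U$. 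Consequently $F \subseteq A_n^{\varepsilon} \subseteq U$ and $F \subseteq B \subseteq U$, and these sandwich inclusions together yield $A_n^{\varepsilon} \triangle B \subseteq U \setminus F$, so that $\varrho(A_n^{\varepsilon} \triangle B) \leq \varrho(U \setminus F) < \varepsilon$.

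To upgrade this $\varepsilon$-indexed family to a single sequence converging to $0$, I would diagonalize. Fixing a sequence $\varepsilon_k \downarrow 0$, the previous step furnishes for each $k$ a threshold $N_k$, which I may take strictly increasing, such that the construction applies for all $n \geq N_k$. Setting $A_n = A_n^{\varepsilon_k}$ for $N_k \leq n < N_{k+1}$, and $A_n = \varnothing$ for $n < N_1$, gives a sequence with $A_n \in \sigma(\xi_n)$ and $\varrho(A_n \triangle B) < \varepsilon_k \to 0$, which is precisely the assertion.

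The step I expect to be the main obstacle is the geometric one. The partitions $(\xi_n)$ are \emph{not} assumed to be refining, so the martingale or conditional-expectation approach, which would be the first instinct, is unavailable; instead one must extract the approximating sets cell by cell using only the metric mesh condition. The sole additional subtlety there is the correct handling of the degenerate cases $F = \varnothing$ or $U = X$, which the convention $\delta = +\infty$ absorbs, since then the inclusion $A_n^{\varepsilon} \subseteq U$ (respectively $F \subseteq A_n^{\varepsilon}$) holds automatically and the symmetric-difference estimate is unaffected.
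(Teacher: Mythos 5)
Your argument is correct. The paper does not prove this lemma---it is imported verbatim from the cited reference---so there is no in-paper proof to compare against; your regularity-plus-mesh argument (sandwich $F \subseteq B \subseteq U$ with $\varrho(U \setminus F) < \varepsilon$, take the union of the cells meeting $F$ once the mesh drops below $d(F, X \setminus U) > 0$, then diagonalise over $\varepsilon_k \downarrow 0$) is precisely the standard proof of that result, and every step, including the inclusion $A_n^{\varepsilon} \triangle B \subseteq U \setminus F$, the degenerate cases, and the final diagonalisation, checks out.
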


\begin{proof}[Proof of \Cref{T intrinsic ergodicity}]
Let $\mu$ denote the frequency measure of maximal entropy given by \Cref{T IS DS MME} or \Cref{LEM:noncompatible top entropy}, and let $m \in \mathbb{N}$. For each $k \in \{ 0, \ldots, \ell^{m} -1 \}$, let $X_{m,k}$ denote the subset of $X_{\vartheta}$ defined by $X_{m,k} = S^{k}(\vartheta^{m}(X_{\vartheta}))$.  It follows by recognisability that these subsets are pairwise disjoint for different choices of $k$. Note, by \Cref{L compact to compact} the subsets $X_{m, k}$ are closed, and since by the constant length property
	\begin{align*}
	S^{\ell^{m}} (X_{m,k})
	= S^{\ell^{m}} (S^{k} (\vartheta^{m}(X_{\vartheta})))
	= S^{k} (\vartheta^{m}(S X_{\vartheta}))
	= S^{k}(\vartheta^{m}(X_{\vartheta})) = X_{m,k},
	\end{align*} 
$X_{m, k}$ is $S^{\ell^{m}}$-invariant.  In other words, $X_{m, k}$ is a subshift under $S^{\ell^{m}}$. Since every $x \in X_\vartheta$ can be split into level $m$ inflation words, we have
	\begin{align*}
	X_\vartheta = \bigsqcup_{k = 0}^{\ell^{m} - 1} X_{m,k},
	\end{align*} 
where the union is disjoint due to recognisability.  \Cref{LEM:recognisability-recursion} implies that $r = \ceil{\kappa(\vartheta)/(\ell -1)} + 1$ satisfies
	\begin{align*}
	\ell^{m} r > \frac{\ell^{m} -1}{\ell -1} \kappa(\vartheta) + \ell^{m} \geq \kappa(\vartheta^{m}) + \ell^{m}.
	\end{align*} 
By the constant length property, this ensures that every word of length at least $2 r \ell^{m}$ has a unique decomposition into inflation words. This together with \Cref{LEM:local-recog} implies,  for all $u \in \mathcal{L}_{\vartheta}^{2r}$ and $w \in \vartheta^{m}(u)$, that $\lvert w \rvert = 2r \ell^{m}$ and $S^{r \ell^{m}}([w]) \subset \vartheta^{m}(X_{\vartheta})$. Let us consider the following partition of $X_{m,k}$:
	\begin{align*}
	\xi_{m,k} = S^{r \ell^{m}} \left( \left\{ S^{k}([w]) : w \in \vartheta^{m}(u) \; \text{and} \; u \in \mathcal{L}_{\vartheta}^{2r}\right\} \right).
	\end{align*} 
This in turn yields a partition of $X_{\vartheta}$, namely
	\begin{align*}
	\xi_{m} = \bigcup_{k = 0}^{\ell^{m} - 1} \xi_{m,k}.
	\end{align*} 
By way of a contradiction, assume that $\nu \neq \mu$ is another ergodic measure of maximal entropy. Since distinct ergodic measures are mutually singular, there exists an $S$-invariant set $B$ with $\mu (B) = 0$ and $\nu (B) = 1$. Note, the diameter of the atoms of $\xi_m$ tends uniformly to zero as $m$ tends to infinity, and so $(\xi_m)_{m \in \mathbb{N}}$ meets the requirements of \Cref{LEM:approx}. Applying it to the measure $\varrho' = (\mu + \nu)/2$ we obtain that, given $\varepsilon > 0$, there exists $m \in \mathbb{N}$ and $A_m \in \sigma(\xi_m)$ such that
	\begin{align}\label{EQ:symm-difference}
	(\mu + \nu) (A_m \triangle B) < \varepsilon.
	\end{align}
For $k \in \{ 0, \ldots, \ell^{m} - 1\}$, let $A_{m,k} = A_m \cap X_{m,k}$ and $B_{m,k} = B \cap X_{m,k}$, and define the conditional probability measures $\mu_{m,k}$ and $\nu_{m,k}$ by
	\begin{align*}
	\mu_{m,k} = \frac{1}{\mu(X_{m,k})} \, \mu \vert_{X_{m,k}}
	\quad \text{and} \quad
	\nu_{m,k} = \frac{1}{\nu(X_{m,k})} \, \nu \vert_{X_{m,k}}.
	\end{align*} 
For all $j \in \{ 0, \ldots, \ell^{m} - 1 \}$, we have $S^{k-j} (X_{m,j}) = X_{m,k}$, and since $\mu$ and $\nu$ are $S$-invariant and since the sets $X_{m,k}$ are disjoint, it follows that
	\begin{align*}
	\mu(X_{m,k}) =  \mu(X_{m,j}) = \frac{1}{\ell^{m}}
	\quad \text{and} \quad
	\nu(B \cap X_{m,k}) =  \nu(B \cap X_{m,j}) = \frac{1}{\ell^{m}}.
	\end{align*} 
Consequently, $\nu_{m,k}(B_{m,k}) = \ell^{m} \, \nu(B \cap X_{m,k}) = 1$. On the other hand, $\mu_{m,k} (B_{m,k}) = \ell^{m} \mu (B \cap X_{m,k}) =  0$. Since $\{ X_{m,k} : k \in \{ 0, \ldots, \ell^{m} -1 \} \}$ forms a partition of $X_{\vartheta}$, we can rewrite \eqref{EQ:symm-difference} as
	\begin{align*}
	\sum_{k = 0}^{\ell^{m} -1} (\mu_{m,k} + \nu_{m,k})(A_{m,k} \triangle B_{m,k}) 
	= \ell^{m} \sum_{k=0}^{\ell^{m}-1} (\mu + \nu)((A_m \triangle B) \cap X_{m,k})
	= \ell^{m} (\mu + \nu) (A_m \triangle B) < \ell^{m} \varepsilon.
	\end{align*}
Hence, there exists a $k'$ such that
	\begin{align}\label{EQ:symm-difference-2}
	(\mu_{m,k'} + \nu_{m,k'})(A_{m,k'} \triangle B_{m,k'}) < \varepsilon.
	\end{align}
Here we observe that $A_{m,k'} \in \sigma(\xi_{m,k'})$, and recall, if $\lvert v \rvert \geq 2 \ell^{m} r$, then the word $v$ has a unique inflation word decomposition under $\vartheta^{m}$. Therefore, there exists a unique $j \in \{ 0, \ldots, \ell^{m} - 1\}$ such that $[v] \subset X_{m,j}$.

Note that the system $(X_{m,j}, S^{\ell^m})$ equipped with the measure $\nu_{m,j}$ is an induced subshift obtained from $(X_{\vartheta},S)$ equipped with the measure $\nu$ by inducing on $X_{m,j}$. Hence, by Abramov's formula \cite{Abramov_59},
	\begin{align*}
	 h(S, \nu) = \frac{1}{\ell^m} h(S^{\ell^{m}}\!, \nu_{m,j}).
	\end{align*} 
We now proceed by similar arguments to Adler and Weiss' \cite{adler-weiss} proof that Markov shifts are intrinsically ergodic, applied to the system $(X_{m,k^{'}}, S^{\ell^{m}})$ and the $S^{\ell^{m}}$-invariant measures $\mu_{m,k^{'}}$ and $\nu_{m,k^{'}}$. For ease of notation, in the following we write $k = k^{'}$ and $T = S^{\ell^{m}}$. Note that 
	\begin{align*}
	\alpha_{m,k} = \{ S^{k}([w]) : w \in \vartheta^{m}(a), a \in \mathcal{A} \}
	\end{align*} 
forms a generating partition of $X_{m,k}$, and by the fact that $\vartheta_{\mathbf{P}}$ is of constant length and recognisable,
	\begin{align*}
	\xi_{m,k} = \bigvee_{j = -r}^{r-1} T^{-j}(\alpha_{m, k}).
	\end{align*} 
Let $\eta_{m} = \{ A_{m,k}, X_{m,k} \setminus A_{m,k} \}$ and for a set $A \subseteq X_{m,k}$ denote by $t_{m}(A)$ the number of atoms in $\xi_{m,k}$ that intersect $A$. By definition, and using \eqref{eq:refence_page_20_i},
we have
	\begin{align*}
	2 r \ell^{m} h(S, \nu)
	&= 2 r h(S^{\ell^{m}}, \nu_{m,k})
	\leq H_{\nu_{m,k}}(\xi_{m,k})\\
	&\leq H_{\nu_{m,k}}(\eta_m) + H_{\nu_{m,k}}(\xi_{m,k} \vert \eta_m)\\
	& \leq \log(2) + \nu_{m,k}(A_{m,k}) \log(t_m(A_{m,k})) + \nu_{m,k}(X_{m,k} \setminus A_{m,k}) \log(t_m(X_{m,k} \setminus A_{m,k})).
	\end{align*}
Let $S^{r \ell^{m} + k}[w] \in \xi_{m,k}$, with $w \in \vartheta^{m}(v)$ for some $v \in \mathcal{L}_{\vartheta}^{2r}$. By \Cref{LEM:mu-bound}, we have that
	\begin{align*}
	\mu_{m,k} (S^{r \ell^{m} + k} ([w]))= \ell^{m} \mu([w]) \geq \mu([v]) \frac{c^{2r}}{2 r \me^{ 2 \ell^{m} r h_{\text{top}}(X_{\vartheta})}}
	\geq C  \me^{- 2 r \ell^{m} h_{\text{top}}(X_{\vartheta})},
	\end{align*} 
taking $C = c^{2r} (\min_{v \in \mathcal{L}_{\vartheta}^{2r}} \mu ([v])) / 2r $. We have that $C > 0$ since $\mu([v]) > 0$ for all $v \in \mathcal{L}_{\vartheta}^{r}$. Hence,
	\begin{align*}
	t_m(A_{m,k}) \leq \frac{1}{C} \, \mu(A_{m,k}) \me^{2 \ell^{m} r h_{\text{top}}(X_{\vartheta})}
	\quad \text{and} \quad 
	t_m(X_{m,k} \setminus A_{m,k}) \leq \frac{1}{C} \,\mu(X_{m,k} \setminus A_{m,k}) \me^{2 \ell^{m} r h_{\text{top}}(X_{\vartheta})}.
	\end{align*} 
This yields $0 \leq \log(2) - \log(C) + \nu_{m,k}(A_{m,k}) \log(\mu_{m,k}(A_{m,k}))$. By \eqref{EQ:symm-difference-2}, we have that $\mu_{m,k}(A_{m,k}) < \varepsilon $ and $\nu_{m,k}(A_{m,k}) > 1 - \varepsilon$. This implies the following contradiction:
	\[
	0 \leq \lim_{\varepsilon \to 0} \left(\log(2) - \log(C) + (1 - \varepsilon) \log(\varepsilon) \right) = - \infty.
	\qedhere
	\]
\end{proof}

From \Cref{LEM:mu-bound}, we have used only the lower bound in the proof of \Cref{T intrinsic ergodicity}. Since this inequality holds under less restrictive conditions, it seems natural to inquire whether \Cref{T intrinsic ergodicity} can be sharpened accordingly by replacing the constant length assumption with a weaker condition. However, a closer inspection reveals that the last part of the proof relies on the detailed control that the constant length assumption provides. A definite answer therefore remains as an open problem.

\section{Examples and open questions}\label{S examples}

In this section we present examples of random substitution subshifts that exhibit various properties. We first present several examples that illustrate the main results of this paper and their applications to two prototypical examples of random substitutions, the random period doubling (\Cref{EX: RPD example section}) and random Fibonacci (\Cref{EX: R Fib}) substitutions. We then consider some familiar examples of subshifts which can be obtained as subshifts of primitive random substitutions, including the golden mean shift (\Cref{EXA: golden mean shift}) and the Dyck shift (\Cref{EXA: Dyck shift}). A summary of the key properties of each of the examples is presented in the table below.

\begin{center}
\begin{tabular}{c|c|c|c|c|c|c|c}
 & 5.1 & 5.2 & 5.3 & 5.4 & 5.5 & 5.6 & 5.7 \\
 \hline 
Unique r. paths & \cmark & \cmark & \cmark & \cmark & \cmark & \cmark & \xmark \\
Compatible & \xmark & \cmark & \xmark & \cmark & \xmark & \cmark & \xmark \\
Constant length & \cmark & \cmark & \cmark & \xmark & \xmark & \cmark & \xmark \\
(ISC)/(DSC) & (DSC) & \xmark & (DSC)  & (DSC) & (DSC) & (ISC) & \xmark \\
Recognisable & \cmark & \xmark & \xmark & \xmark & \xmark & \xmark & \xmark \\
Frequency MME & \cmark & \cmark & \xmark & \xmark & \cmark & \cmark & ? \\
Intrinsically ergodic & \cmark & ? & ? & ? & \cmark & \cmark & \xmark \\
\end{tabular}
\end{center}

By the existence of a frequency measure of maximal entropy, we mean that there exists a choice of probabilities on the given set-valued substitution that gives rise to a frequency measure of maximal entropy. In particular, when we say there does not exist such a frequency measure of maximal entropy, we do not rule out the possibility that there exists another random substitution that gives rise to the same subshift for which the corresponding frequency measure is a measure of maximal entropy.

We first give an example of a random substitution which satisfies the conditions of \Cref{T intrinsic ergodicity}, thus gives rise to an intrinsically ergodic subshift.

\begin{example}\label{EX: intrinsically ergodic example}
Let $\vartheta$ be the random substitution defined by
	\begin{align*}
	\vartheta \colon
		\begin{cases}
		a \mapsto
			\begin{cases}
			aaa & \text{with probability } 1/2,\\
			abb & \text{with probability } 1/2,
			\end{cases}\\[1.25em]
		b \mapsto
			\begin{cases}
			bba & \text{with probability } 1/2,\\
			aba & \text{with probability } 1/2,
			\end{cases}
		\end{cases}
	\end{align*}
with associated subshift $X_{\vartheta}$ and corresponding frequency measure $\mu$. One can verify that $\vartheta$ is recognisable and satisfies the conditions of \Cref{T intrinsic ergodicity} (specifically (i)). Hence, $\mu$ is the unique measure of maximal entropy for the system $(X_{\vartheta},S)$. By \Cref{THM:main-urp}, we have 
\begin{equation*}
    h (\mu) = h_{\text{top}} (X_{\vartheta}) = \frac{1}{2}\log(2) \text{.}
\end{equation*}
\end{example}

\begin{example}[Random period doubling]\label{EX: RPD example section}
Let $p \in (0,1)$, let $\vartheta_{p}$ be the random substitution defined by
	\begin{align*}
	\vartheta_{p} \colon
		\begin{cases}
		a \mapsto 
			\begin{cases}
			ab & \text{with probability} \; p,\\
			ba & \text{with probability} \; 1-p,
			\end{cases}\\[1.25em]
		b \mapsto aa,
		\end{cases}
	\end{align*}
and let $\mu_{p}$ denote the corresponding frequency measure. We have that $\vartheta_{p}$ is compatible and satisfies the disjoint set condition, so it follows by \Cref{THM:main-urp} that
	\begin{align*}
	h (\mu_{p}) = -\frac{2}{3} (p \log(p) + (1-p) \log(1-p)).
	\end{align*}
Moreover, by \Cref{T IS DS MME}, we have that $\mu_{1/2}$ is a measure of maximal entropy for the system $(X_{\vartheta},S)$. It is known that $\vartheta_p$ is not recognisable; therefore, we are unable to apply \Cref{T intrinsic ergodicity}, so it remains open as to whether or not this is the unique measure of maximal entropy.
\end{example}

For each of the previous two examples, the frequency measure corresponding to uniform probabilities was a measure of maximal entropy. However, this is not the case for all primitive random substitutions satisfying the disjoint set condition, as is demonstrated by the following example. Here, the frequency measure of greatest entropy occurs at a non-uniform choice of probabilities, and this frequency measure is not a measure of maximal entropy.

\begin{example}\label{EX: R dependent on probs}
Let $p \in (0,1)$ and let $\vartheta_{\mathbf{P}}$ be the random substitution defined by
\begin{equation*}
    \vartheta_{p} \colon
    \begin{cases}
    a \mapsto
    \begin{cases}
    aa &\text{with probability $p$,}\\
    ab &\text{with probability $1-p$,}
    \end{cases}\\
    b \mapsto ba,
    \end{cases}
\end{equation*}
with corresponding frequency measure $\mu_p$ and subshift $X_{\vartheta}$. Since $\vartheta_{p}$ is constant length and satisfies the disjoint set condition, it follows by \Cref{THM:main-urp} that
\begin{equation*}
    h (\mu_p) = - \frac{1}{2-p} (p \log p + (1-p) \log (1-p)) \text{.}
\end{equation*}
The value of $p$ that maximises the above expression is $p = \tau^{-1}$, where $\tau$ is the golden ratio, for which the corresponding entropy is
\begin{equation*}
    h (\mu_{\tau^{-1}}) = \log \tau \approx 0.481212 \text{.}
\end{equation*}
On the other hand, one can compute that the topological entropy of the system $(X_{\vartheta}, S)$ is
\begin{equation*}
h_{\text{top}} (X_{\vartheta}, S) = \sum_{n=1}^{\infty} \frac{1}{2^n} \log n \approx 0.507834 \text{,}
\end{equation*}
so $\mu_{\tau^{-1}}$ is not a measure of maximal entropy. Thus, the conclusion of \Cref{T IS DS MME} does not hold without compatibility, even for constant length random substitutions. We note that the topological entropy equals $\log \sigma$, where $\sigma$ is Somos's quadratic recurrence constant \cite[p. 446]{finch}. It is an open question as to whether $\sigma$ is algebraic or transcendental. 
\end{example}
\begin{figure}[ht]\label{Entropy graph}
\centering
\includegraphics[scale = 0.5]{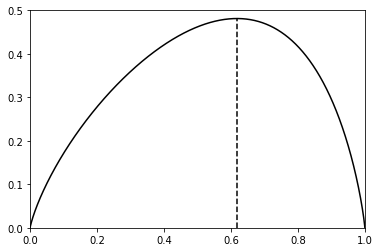}
\caption{Plot of $h (\mu_p)$ for $p \in (0,1)$.}
\end{figure}

The previous examples all satisfy the disjoint set condition, so we could obtain a closed form expression for the entropy via \Cref{THM:main-urp}. This is not the case for our next example, the random Fibonacci substitution, which is compatible but does not satisfy either the disjoint or identical set condition.

\begin{example}[Random Fibonacci]\label{EX: R Fib}
Let $\vartheta_{\text{RF}}$ denote the random substitution defined by
	\begin{align*}
	\vartheta_{\text{RF}} \colon
		\begin{cases}
		a \mapsto
			\begin{cases}
			ab & \text{with probability} \; 1/2,\\
			ba & \text{with probability} \; 1/2,
			\end{cases}\\[1.25em]
		b \mapsto a,
		\end{cases}
	\end{align*}
and let $\mu_{\text{RF}}$ denote the corresponding frequency measure. Since $\vartheta_{\text{RF}}$ satisfies neither the identical set condition nor the disjoint set condition, \Cref{THM:main-urp} does not yield a closed form formula for the measure theoretic entropy of $\mu_{\text{RF}}$. However, we may use \Cref{THM:main-urp} to obtain a sequence of bounds on $h (\mu_{\text{RF}})$.  Indeed, we have $\lambda^{-k} \mathbf{H}_{k}^{\top} \mathbf{R} \leq h (\mu_{\text{RF}}) \leq (\lambda^{k} - 1)^{-1} \mathbf{H}_{k}^{\top} \mathbf{R}$ for all $k \in \mathbb{N}$, and a computer-assisted calculation of $\mathbf{H}_{6}^{\top} \mathbf{R}$ yields
	\begin{align*}
	0.3908 < \frac{1}{\lambda^{6}} \mathbf{H}_{6}^{\top} \mathbf{R} \leq h (\mu_{\text{RF}}) \leq \frac{1}{\lambda^{6} - 1} \mathbf{H}_{6}^{\top} \mathbf{R} < 0.4140,
	\end{align*}
noting that $\lambda = \tau$, where $\tau$ is the golden ratio. It was shown in \cite{godreche-luck,nilsson2} that
	\begin{align*}
	h_{\text{top}}(X_{\vartheta_{\text{RF}}}) = \sum_{m=2}^{\infty} \frac{\log(m)}{\tau^{m+2}} \approx 0.444399,
	\end{align*}
so $\mu_{\text{RF}}$ is \emph{not} a measure of maximal entropy. By taking higher powers, we obtain frequency measures of greater entropy. If we consider the square of $\vartheta_{\text{RF}}$ with uniform probabilities, namely
	\begin{align*}
	\vartheta_{\text{RF}, 2} \colon
		\begin{cases}
		a \mapsto
			\begin{cases}
			baa & \text{with probability} \; 1/3,\\
			aba & \text{with probability} \; 1/3,\\
			aab & \text{with probability} \; 1/3,
			\end{cases}\\[2em]
		b \mapsto
			\,\begin{cases}
			ab\hphantom{a} & \text{with probability} \; 1/2,\\
			ba & \text{with probability} \; 1/2,
			\end{cases}
		\end{cases}
	\end{align*}
and let $\mu_{\text{RF}, 2}$ be the corresponding frequency measure, then by \Cref{THM:main-urp} and a computer-aided calculation of $\mathbf{H}_{3}^{\top} \mathbf{R}$ we obtain
	\begin{align*}
	0.4177 < \frac{1}{\lambda^{6}} \mathbf{H}_{3}^{\top} \mathbf{R} \leq h (\mu_{\text{RF},2}) \leq \frac{1}{\lambda^{6}-1} \mathbf{H}_{3}^{\top} \mathbf{R} < 0.4424.
	\end{align*}
Here, $\lambda^{2}$ is the Perron--Frobenius eigenvalue of $\vartheta_{\text{RF}, 2}$. Hence, $h (\mu_{\text{RF}}) < h (\mu_{\text{RF}, 2}) < h_{\text{top}}(X_{\vartheta_{\text{RF}}})$, so $\mu_{\text{RF}, 2} $ is still not a measure of maximal entropy, but has strictly greater entropy than $\mu_{\text{RF}}$. \Cref{T weak limit MME} gives that a measure of maximal entropy can be obtained as a weak limit of frequency measures. In particular, if $(\mu_{\text{RF}, n})_{n \in \mathbb{N}}$ is the sequence of frequency measures corresponding to the $n$-th power of $\vartheta_{\text{RF}}$ with uniform probabilities, then there exists a subsequence $(\mu_{\text{RF}, n_{k}})_{k \in \mathbb{N}}$ that converges weakly to a measure of maximal entropy. 
As to whether the system $(X_{\vartheta_{\text{RF}}}, S)$ is intrinsically ergodic, this remains open.
\end{example}

We now consider applications of our results to other common subshifts in symbolic dynamics. It was shown in \cite{gohlke-rust-spindeler} that every topologically transitive shift of finite type can be obtained as the subshift of a primitive random substitution. For the golden mean shift, it is possible to obtain the Parry measure as a weak limit of frequency measures corresponding to primitive random substitutions.

\begin{example}[The golden mean shift]\label{EXA: golden mean shift}
The golden mean shift is the shift of finite type over the alphabet $\{ a, b \}$ defined by the forbidden word set $\mathcal{F} = \{ bb \}$. The subshift $X$ can be obtained as the subshift of the random substitution
\begin{equation*}
    \vartheta \colon
    \begin{cases}
    a \mapsto
    \begin{cases}
    aa & \text{with probability $\tau^{-1}$,}\\
    aba & \text{with probability $\tau^{-2}$,}\\
    \end{cases}\\
    b \mapsto
    b.
    \end{cases}
\end{equation*}
However, this random substitution is not primitive, so we cannot directly apply our results. To circumvent this issue, let $\varepsilon \in (0,1)$ and let $\vartheta_{\varepsilon}$ be the random substitution defined by 
\begin{equation*}
    \vartheta_{\varepsilon} \colon
    \begin{cases}
    a \mapsto
    \begin{cases}
    aa & \text{with probability $\tau^{-1}$,}\\
    aba & \text{with probability $\tau^{-2}$,}\\
    \end{cases}\\
    b \mapsto
    \begin{cases}
    b & \text{with probability $1-\varepsilon$,}\\
    abb & \text{with probability $\varepsilon$,}
    \end{cases}
    \end{cases}
\end{equation*}
and let $\mu_{\varepsilon}$ denote the corresponding frequency measure. For all $\varepsilon \in (0,1)$, $\vartheta_{\varepsilon}$ is a primitive random substitution with unique realisation paths satisfying the disjoint set condition. Let $\mu$ be the weak limit of $\mu_{\varepsilon}$ as $\varepsilon \rightarrow 0$. By compactness, $\mu$ is a shift-invariant probability measure. Also note that $X$ is the support of $\mu$. One can show that $R_{a, \varepsilon} / (\lambda_{\varepsilon} - 1) \rightarrow \tau^2 / (\tau^2 +1)$ as $\varepsilon \rightarrow 0$, where $\lambda_{\varepsilon}$ and $R_{a, \varepsilon}$ are the Perron--Frobenius eigenvalue and the entry of the right Perron--Frobenius eigenvector corresponding to the letter $a$, respectively. Thus, it follows by the upper semi-continuity of entropy and \Cref{THM:main-urp} that
\begin{equation*}
\begin{split}
    h (\mu) &\geq \limsup_{\varepsilon \rightarrow 0} h (\mu_{\varepsilon}) = \limsup_{\varepsilon \rightarrow 0} \frac{1}{\lambda_{\varepsilon} - 1} \mathbf{H}_{1}^{\top} \mathbf{R} \geq \limsup_{\varepsilon \rightarrow 0} \frac{-1}{\lambda_{\varepsilon} - 1} R_{a,\varepsilon} (\tau^{-2} \log \tau^{-2} + \tau^{-1} \log \tau^{-1})\\
    &= \frac{\tau^2}{\tau^2+1} (2 \tau^{-2} + \tau^{-1}) \log \tau = \log \tau \text{,}
\end{split}
\end{equation*}
where in the last equality we have used the characteristic equation $\tau^2 = \tau + 1$. Since $h_{\text{top}} (X,S) = \log \tau$ and the Parry measure is the unique measure of maximal entropy \cite{adler-weiss, Parry_64}, we conclude that $\mu$ must be the Parry measure.
\end{example}

We note that the algorithm in \cite{gohlke-rust-spindeler} yields a primitive random substitution that gives rise to the golden mean shift. However, a closer inspection reveals that if the corresponding frequency measure is the Parry measure then we require two of the realisations to occur with probability zero and the resulting random substitution is the random substitution $\vartheta$ defined in \Cref{EXA: golden mean shift}, which is not primitive. As to whether there exists a primitive random substitution for which the Parry measure is the corresponding frequency measure remains open. Our next example is a sofic shift for the which the unique measure of maximal entropy can be obtained as a frequency measure of a primitive random substitution.

\begin{example}[A sofic shift]\label{EX: sofic example}
Let $p \in (0,1)$, let $\vartheta_{p}$ be the random substitution defined by
	\begin{align*}
	\vartheta_{p} \colon
	a, b \mapsto 
		\begin{cases}
		ab & \text{with probability} \; p,\\
		ba & \text{with probability} \; 1-p,
		\end{cases}
	\end{align*}
and let $\mu_{p}$ denote the corresponding frequency measure. In \cite[Proposition 6.7]{gohlke-spindeler}, the measure theoretic entropy of $\mu_p$ was calculated directly and shown to be
	\begin{align*}
	h (\mu_{p}) = - \frac{1}{2} (p \log(p) + (1-p) \log(1-p)).
	\end{align*}
Since $\vartheta_{p}$ satisfies the identical set condition and has identical production probabilities, \Cref{THM:main-urp} gives an alternative method of obtaining this formula. Moreover, by \Cref{T IS DS MME}, for $p = 1/2$, the measure $\mu_p$ is a measure of maximal entropy. Notice that $\vartheta_p$ is of constant length, but not recognisable since it does not satisfy the disjoint set condition. Hence, \Cref{T intrinsic ergodicity} may not be applied. However, it was shown in \cite[Corollary 6.8]{gohlke-spindeler} that the subshift associated to $\vartheta_{p}$ is a sofic shift, thus intrinsically ergodic. Hence, $\mu_{p}$ with $p = 1/2$ is the unique measure of maximal entropy for the system $(X_{\vartheta},S)$.
\end{example}

We finally present an example of a random substitution subshift which has multiple measures of maximal entropy. This is the \emph{Dyck shift}, which was shown in \cite{krieger} to support two distinct ergodic measures of maximal entropy.

\begin{example}[The Dyck shift]\label{EXA: Dyck shift}
For $i \in \{ 1,2,3,4 \}$, let $\mathbf{p}_i = (p_{i,1},p_{i,2},p_{i,3})$ be a probability vector and let $\mathbf{P} = \{ \mathbf{p}_1, \mathbf{p}_2, \mathbf{p}_3, \mathbf{p}_4 \}$. Define the random substitution $\vartheta_{\mathbf{P}}$ over the alphabet $\mathcal{A} = \{ (, \, ), \, [, \, ] \}$ by
\begin{align*}
    \vartheta_{\mathbf{P}} \colon
    \begin{cases} \;
   \begin{aligned}
    ( &&\mapsto&&
    \begin{cases}
    \; (  &\text{ with probability $p_{1,1}$,}\\
    \; ( ( ) &\text{ with probability $p_{1,2}$,}\\
    \; ( [ ] &\text{ with probability $p_{1,3}$,}\\
    \end{cases}
    &\quad& ) &&\mapsto&&
    \begin{cases}
    \; ) &\text{ with probability $p_{2,1}$,}\\
    \; ( ) ) &\text{ with probability $p_{2,2}$,}\\
    \; [ ] ) &\text{ with probability $p_{2,3}$,}\\
    \end{cases}\\
    [ &&\mapsto&&
    \begin{cases}
    \; [ &\text{ with probability $p_{3,1}$,}\\
    \; [ ( ) &\text{ with probability $p_{3,2}$,}\\
    \; [ [ ] &\text{ with probability $p_{3,3}$,}\\
    \end{cases}
    &\quad& ] &&\mapsto&&
    \begin{cases}
    \; ] &\text{ with probability $p_{4,1}$,}\\
    \; ( ) ] &\text{ with probability $p_{4,2}$,}\\
    \; [ ] ] &\text{ with probability $p_{4,3}$.}\\
    \end{cases}
\end{aligned}
    \end{cases}
\end{align*}
The corresponding subshift is the Dyck shift, which supports two distinct measures of maximal entropy \cite{gohlke-spindeler}. The random substitution $\vartheta_{\mathbf{P}}$ does not have unique realisation paths since, for example, the word $(())$ can be obtained as two different realisations of $()$ under $\vartheta_{\mathbf{P}}$. Consequently, it is difficult to verify whether or not either or both of the ergodic measures of maximal entropy can be obtained as frequency measures.
\end{example}

This final example motivates the following open question.

\begin{question}
Under what conditions does a primitive random substitution give rise to an intrinsically ergodic subshift?
\end{question}
We have presented three examples of random substitutions which give rise to intrinsically ergodic subshifts. In general it appears to be difficult to deduce whether a random substitution subshift is intrinsically ergodic. The absence of a Gibbs property and specification provide obstacles to adapting many of the conventional methods for checking whether a subshift is intrinsically ergodic. Further, there does not appear to be an easy way of extending the proof of \Cref{T intrinsic ergodicity} to the case where the substitution is not recognisable or constant length. As such, we leave a definitive answer to future work.

\section*{Acknowledgements}

It is a pleasure to thank R.\,Leek and J.\,Mitchell for many useful and helpful discussions. We are also very grateful for detailed feedback from an anonymous referee. The first and third authors were supported by the German Research Foundation (DFG) via the Collaborative Research Centre (CRC 1283) and by the Research Centre of Mathematical Modelling (RCM2) of Universit\"at Bielefeld.  The second author would like to thank the School of Mathematics at University of Birmingham and EPSRC DTP for their support.  Finally, the last author would like to thank HIM for supporting a research stay during the program \textsl{Dynamics:\ Topology and Numbers} where part of this work was completed.

\bibliographystyle{abbrv}
\bibliography{ref}

\end{document}